\documentclass[12pt,draft]{amsart}
\usepackage[all]{xy}
\usepackage{amsfonts}
\usepackage{amssymb}
\usepackage{enumerate}
\usepackage{color}

\usepackage{stackrel}
\usepackage{pdflscape}
\usepackage{times}
\usepackage{graphics}
\usepackage{tikz-cd}
\usetikzlibrary{babel}

\newcommand{\centralquot}{N/N_c=N_1/N_c\geqslant N_2/N_c\geqslant...\geqslant N_{c-1}/N_c\geqslant N_c/N_c=1}
\newcommand{\fctgp}[2]{N_{#1}/N_{#2}}
\newcommand{\rnum}[1]{R(\phi_{#1},\psi_{#1})}
\newcommand{\rprim}[1]{R(\phi'_{#1},\psi'_{#1})}
\newcommand{\primcl}[1]{[g_{#1}N_c]_{\phi',\psi'}}
\newcommand{\cclass}[1]{[c_{#1}]_{\phi_c,\psi_c}}
\newcommand{\rclass}{[c_ig_j]_{\phi,\psi}}
\newcommand{\rcoincl}[1]{\mathfrak{R}[\phi #1,\psi #1]}

\newcommand{\commdiag}[6]{
\begin{tikzcd}[ampersand replacement=\&]
1 \arrow[r] \& #1 \arrow[r,"i"] \arrow[d, "#2"] \& #3 \arrow[r,"p"] \arrow[d, "#4"] \& #5 \arrow[r] \arrow[d, "#6"] \& 1\\
1 \arrow[r] \& #1 \arrow[r,"i"] \& #3 \arrow[r,"p"] \& #5 \arrow[r] \& 1\\
\end{tikzcd}}

\newcommand{\isol}[1]{\sqrt[N]{\gamma_{#1}(N)}}
\newcommand{\adaptedlcs}{N = \sqrt[N]{\gamma_1(N)} \geqslant \sqrt[N]{\gamma_2(N)} \geqslant ... \geqslant \sqrt[N]{\gamma_c(N)} \geqslant \sqrt[N]{\gamma_{c+1}(N)} = 1}

\newcommand{\reidclasset}[4]{\mathfrak{R(#1,#2)} = \{[#3_1 #4]_{#1,#2}, ... ,[#3_{R(#1,#2)} #4]_{#1,#2}\} }

\renewcommand{\le}{\leqslant}

\newcommand{\coht}{\operatorname{coht}}
\newtheorem{teo}{Theorem}[section]
\newtheorem{lem}[teo]{Lemma}

\theoremstyle{definition}
\newtheorem{dfn}[teo]{Definition}

\newtheorem{ex}[teo]{Example}

\def\<{\langle}
\def\>{\rangle}

\def\Z{{\mathbb Z}}

\def\End{\mathop{\rm End}\nolimits}

\def\Id{\operatorname{Id}}
\def\tr{\mathop{\rm tr}\nolimits}
\def\diag{\mathop{\rm diag}\nolimits}

\def\1{\mathbf 1}

\newcommand{\coker}{{\rm Coker\ }}

%%%%%%%%%%%%%%%%%%%%%%%%%%%%%%%%%%%%%%%%%%%%%%%%%%%%%
%%%%%%%%

%%%%%%%%%%%%%%%%%%%%%%%%%%%%%%%%%%%%%%%%%%%%%%%%%%%%

\def\notdivides{\mathrel{\kern-3pt\not\!\kern3.5pt\bigm|}}
\def\smallnotdivides{\mathrel{\kern-2pt\not\!\kern3.5pt\vert}}

\newcommand{\mbz}{\mathbb{Z}}

\newcommand{\zp}{\mathbb{Z}_p}

%%%%%%%%%%
\begin{document}
\title[ Towards a  dichotomy  for Reidemeister zeta function  ]
{ Towards a dichotomy for the  Reidemeister zeta function }

\author[Wojciech Bondarewicz, Alexander Fel'shtyn and  Malwina Zietek]{Wojciech Bondarewicz, Alexander Fel'shtyn and  Malwina Zietek}

\address{\textsc{Wojciech Bondarewicz}\\
Instytut Matematyki\\
 Uniwersytet Szczecinski\\
ul. Wielkopolska 15, 70-451 Szczecin, Poland} 
\email{wojciech.bondarewicz@usz.edu.pl}

\address
{\textsc{Alexander Fel'shtyn}\\
Instytut Matematyki\\
 Uniwersytet Szczecinski\\
ul. Wielkopolska 15, 70-451 Szczecin, Poland} 
\email{alexander.felshtyn@usz.edu.pl}

\address{\textsc{Malwina Zietek}\\
Instytut Matematyki\\
 Uniwersytet Szczecinski\\
ul. Wielkopolska 15, 70-451 Szczecin, Poland} 
\email{malwina.zietek@gmail.com}

\subjclass[2010]{Primary 37C25; 37C30; 22D10;  Secondary 20E45; 54H20; 55M20}
\keywords{ Twisted conjugacy class; Reidemeister number; Reidemeister zeta function;  unitary dual}

\thanks{The work is funded by the  Narodowe Centrum Nauki of Poland (NCN) (grant No.~\!2016/23/G/ST1/04280 (Beethoven 2)).}

\begin{abstract}
We prove a dichotomy  between rationality and a natural boundary for the analytic behavior  of the Reidemeister zeta function for automorphisms of non-finitely  generated torsion  abelian groups and for endomorphisms of groups ~$\mbz_p^d,$
where  ~$\mbz_p$~ the group of p-adic integers. As a consequence, we obtain a dichotomy for the Reidemeister zeta function of a
 continuous map of a topological space with fundamental
 group that is non-finitely generated torsion  abelian group.
We also prove  the rationality  of  the coincidence Reidemeister  zeta function  for tame endomorphisms pairs of  finitely generated torsion-free nilpotent groups, based on a weak commutativity condition.

 \end{abstract}

\maketitle
\setcounter{section}{-1}
\section{Introduction}

Let $G$ be a group and $\phi: G\rightarrow G$ an endomorphism.
Two elements $\alpha,\beta\in G$ are said to be
$\phi$-{\em conjugate} or {\em twisted conjugate,} iff there exists $g \in G$ with
$\beta=g  \alpha  \phi(g^{-1}).$
We shall write $\{x\}_\phi$ for the $\phi$-{\em conjugacy} or
{\em twisted conjugacy} class
of the element $x\in G$.
The number of $\phi$-conjugacy classes is called the {\em Reidemeister number}
of an  endomorphism $\phi$ and is  denoted by $R(\phi)$.
If $\phi$ is the identity map then the $\phi$-conjugacy classes are the usual
conjugacy classes in the group $G$.
We call the   endomorphisms $\phi $  \emph{tame} if the
Reidemeister numbers $R(\phi^n)$ are finite for all $n \in
\mathbb{N}$.
Taking a dynamical point of view, we consider the iterates of a tame endomorphism  $\phi$, and we may define following \cite{Fel91}   a Reidemeister zeta function of  $\phi$ as a power series:
\begin{align*}
R_\phi(z)&=\exp\left(\sum_{n=1}^\infty \frac{R(\phi^n)}{n}z^n\right),
\end{align*}
where $z$ denotes a complex variable.
The following problem was investigated \cite{fh}:  for which groups and endomorphisms is the Reidemeister zeta function a rational function? Is this zeta function an algebraic function?

In \cite{Fel91, fh, Li, fhw, Fel00}, the rationality of the Reidemeister zeta function $R_\phi(z)$ was proven in the following cases: the group is finitely generated and an endomorphism is eventually commutative;
the group is finite; the group is a direct sum of a finite group and a finitely generated free abelian group;
 the group is finitely generated, nilpotent and torsion-free. 
 In \cite{Wong01} the rationality of the Reidemeister  zeta function was proven
 for endomorphisms of
fundamental groups of  infra-nilmanifolds under some  sufficient conditions. Recently, the rationality of the Reidemeister zeta function was  proven for endomorphisms of fundamental groups of infra-nilmanifolds \cite{DeDu}; for endomorphisms of fundamental groups of infra-solvmanifolds of type (R) \cite{FelLee};  for automorphisms of crystallographic groups with diagonal holonomy $\mathbb{Z}_2$ and for automorphisms of almost-crystallographic groups up to dimension 3 \cite{DekTerBus};
for the right shifts of a non-finitely generated, non-abelian torsion groups $G=\oplus_{i\in\Z} F_i$,
$F_i\cong F$ and  $F$ is a finite non-abelian  group \cite{tr}.

Let $G$ be a group and $\phi, \psi : G\rightarrow G$
two endomorphisms. Two elements $\alpha,\beta\in G$ are said to be
$(\phi, \psi)-conjugate$ iff there exists $g \in G$ with
$$
\beta=\psi(g)  \alpha   \phi(g^{-1}).
$$
The number of $(\phi,\psi$)-conjugacy classes is called the Reidemeister coincidence
number of an  endomorphisms $\phi $ and $\psi $, denoted by $ R(\phi,\psi)$. If $\psi$ is the identity map then the $(\phi,id)$-conjugacy classes are the $\phi$ - conjugacy classes in the group $G$ and $R(\phi,id) = R(\phi)$.
The Reidemeister coincidence number $R(\phi,\psi)$ has useful applications in Nielsen coincidence theory.
We call the pair $(\phi,\psi)$ of endomorphisms \emph{tame} if the
Reidemeister numbers $R(\phi^n,\psi^n)$ are finite for all $n \in
\mathbb{N}$.  For such a tame pair of endomorphisms we define the    
\emph{coincidence Reidemeister zeta function}

 \begin{align*}
R_{\phi,\psi}(z)&=\exp\left(\sum_{n=1}^\infty \frac{R(\phi^n,\psi^n)}{n}z^n\right).
\end{align*}

If $\psi$ is the identity map then $R_{\phi,id}(z) = R_{\phi}(z)$.
In the theory of dynamical systems, the coincidence
  Reidemeister zeta function counts the synchronisation points of two
  maps, i.e. the points whose orbits intersect under simultaneous
  iteration of two endomorphisms; see \cite{Mi13}, for instance.

In \cite{FZ2}, in analogy to works of
Bell, Miles, Ward~\cite{BMW} and Byszewski,
  Cornelissen~\cite[\S 5]{ByCo18} about Artin-Mazur zeta function, the P\'olya--Carlson dichotomy  between rationality and a natural boundary for analytic behavior  of the coincidence Reidemeister zeta function was proven for tame pair of commuting automorphisms of non-finitely  generated torsion-free abelian groups that are subgroups of $\mathbb{Q}^d, d \geq 1$.

In \cite{FK}  P\'olya--Carlson dichotomy was proven for
coincidence Reidemeister zeta function of tame pair  of  endomorphisms of non-finitely  generated torsion-free nilpotent groups of finite Pr\"ufer rank by means of profinite completion techniques.

In this paper we present results  in support of
a dichotomy between rationality and a
natural boundary for  the Reidemeister zeta functions of endomorphisms of new broad classes of abelian groups.

We prove a dichotomy between rationality and a natural boundary for the Reidemeister zeta function of automorphisms of non-finitely generated torsion  abelian groups and of endomorphisms of the groups ~$\mbz_p^d,~ d\geq 1$, where  ~$\mbz_p$,~ $p$-prime, additive group of $p$-adic integers.

 As a consequence, we obtain a dichotomy for Reidemeister zeta function of continuous maps of topological spaces those fundamental
 group  is non-finitely generated  torsion  abelian group
 or non-finitely generated torsion-free abelian group that is subgroup of $\mathbb{Q}^d, d \geq 1$.
 
We also prove   the rationality  of  the coincidence Reidemeister  zeta function  for tame endomorphisms pairs  of  finitely generated torsion-free nilpotent groups, based on weak commutativity condition .

\bigskip

\noindent
\textbf{Acknowledgments.} 
This work was supported by the grant Beethoven 2
 of the NCN of Poland(
   grant No. 2016/23/G/ST1/04280).
   The second author is indebted to the Max-Planck-Institute for Mathematics(Bonn)  for the support and hospitality
and the possibility of the present research during his visit there.

\section{P\'olya-Carlson dichotomy for Reidemeister zeta function of automorphisms of  torsion abelian non-finitely generated groups }

Let $\phi: G \rightarrow G$  be an endomorphism of a  countable discrete abelian group  $G$.
Let  $R=\mathbb{Z}[t]$ be a polynomial  ring. Then the abelian group $G$ naturally carries the structure  of a $R$-module over the ring $R=\mathbb{Z}[t]$  where multiplication by~$t$
corresponds to application of the endomorphism:
$ tg=\phi(g)$  and extending this in a natural way to polynomials.
That is, for $g\in G$ and $ f=\sum_{n\in \mathbb{Z} }c_n t^n \in R=\mathbb{Z}[t]$ set
$$ fg= \sum_{n\in \mathbb{Z} }c_n t^ng=\sum_{n\in \mathbb{Z} }c_n \phi^n(g),$$
where all but finitely many $c_n\in \mathbb{Z}$ are zero.
This is a standard procedure for the
study of dual automorphisms of compact abelian
groups, see Schmidt~\cite{Sch} for an overview.

\newpage

\subsection{Main Theorem}

\bigskip

We will repeatedly apply the following  results
to calculate the Reidemeister numbers of iterations.
 
\begin{lem}\cite{Mi}\label{miles1}
 Let $L \subset N$ be $R$-modules and $g\in R$.\\
 Then
 \\
(1) $$ \Bigl |\frac{N}{gN}\Bigr | = \Bigl |\frac{N/L}{g(N/L)}  \Bigr |\Bigl |\frac{L}{L \cap gN} \Bigr |$$\\
 (2) If $N/L$ is finite and the map $x\to gx$ is a monomorphism of $N$ then
 $$\Bigl |\frac{N}{gN}\Bigr |=\Bigl |\frac{L}{gL}\Bigr |.$$
 \end{lem}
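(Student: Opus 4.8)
The plan is to prove both identities by working with the short exact sequence of $R$-modules
\[
0 \longrightarrow L \longrightarrow N \longrightarrow N/L \longrightarrow 0
\]
and chasing how multiplication by $g\in R$ interacts with it. Writing $\mu_g$ for the map $x\mapsto gx$ on each module, I would apply the snake lemma to the commutative diagram whose rows are the above exact sequence and whose three vertical arrows are $\mu_g$ on $L$, on $N$, and on $N/L$ respectively. This yields a six-term exact sequence
\[
0 \to \Ker(\mu_g|_L) \to \Ker(\mu_g|_N) \to \Ker(\mu_g|_{N/L}) \to \frac{L}{gL} \to \frac{N}{gN} \to \frac{N/L}{g(N/L)} \to 0,
\]
since the cokernel of $\mu_g$ on a module $M$ is exactly $M/gM$.

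For part (1), the subtlety is that this snake sequence relates $L/gL$ rather than the quantity $|L/(L\cap gN)|$ that appears in the statement. To fix this I would instead use the refined exact sequence coming from the inclusions $gL \subseteq L\cap gN \subseteq L$: one has
\[
0 \to \frac{L\cap gN}{gL} \to \frac{L}{gL} \to \frac{L}{L\cap gN} \to 0,
\]
and then identify $\frac{L}{L\cap gN}$ with the image of $L$ in $N/gN$, equivalently $\Ker\big(N/gN \to (N/L)/g(N/L)\big)$, via the second isomorphism theorem. Combining this with the surjection $N/gN \twoheadrightarrow (N/L)/g(N/L)$ from the snake sequence gives the multiplicative identity
\[
\Bigl|\frac{N}{gN}\Bigr| = \Bigl|\frac{N/L}{g(N/L)}\Bigr|\cdot\Bigl|\frac{L}{L\cap gN}\Bigr|,
\]
valid as an identity in $\N\cup\{\infty\}$ with the usual conventions (if either factor on the right is infinite, so is the left side, and conversely). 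I would emphasize that no finiteness hypothesis is needed here because the relation is just the exactness of $0 \to L/(L\cap gN) \to N/gN \to (N/L)/g(N/L) \to 0$.

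For part (2), assume $N/L$ is finite and $\mu_g$ is injective on $N$ (hence on $L$ and, crucially, the induced map on the finite module $N/L$ is injective, therefore bijective). Then $g(N/L) = N/L$, so the first factor in (1) is $1$, giving $|N/gN| = |L/(L\cap gN)|$. It remains to see $L\cap gN = gL$: the inclusion $gL\subseteq L\cap gN$ is clear, and conversely if $\ell = gn \in L$ with $n\in N$, then the image of $n$ in $N/L$ is killed by $g$, hence is zero by injectivity of $\mu_g$ on $N/L$, so $n\in L$ and $\ell\in gL$. Therefore $|N/gN| = |L/gL|$. The main obstacle throughout is purely bookkeeping: being careful that the identities are asserted in $\N\cup\{\infty\}$ and that in part (1) one does \emph{not} silently assume the modules are finite, whereas in part (2) finiteness of $N/L$ is exactly what upgrades the injectivity of $\mu_g$ to the surjectivity needed to kill the quotient factor.
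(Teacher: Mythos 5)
Your proof of part (1) is correct: the short exact sequence
\[
0 \longrightarrow \frac{L}{L\cap gN} \longrightarrow \frac{N}{gN} \longrightarrow \frac{N/L}{g(N/L)} \longrightarrow 0
\]
(the kernel of the natural surjection $N/gN \to (N/L)/g(N/L)$ is $(L+gN)/gN \cong L/(L\cap gN)$ by the second isomorphism theorem) gives the identity immediately, with no finiteness hypotheses. The snake-lemma preamble is an unnecessary detour, but the argument you land on is right.

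Part (2) has a genuine gap. You assert that injectivity of $\mu_g$ on $N$ forces injectivity of the induced map $\bar\mu_g$ on $N/L$, and you use this twice: to conclude that the factor $|(N/L)/g(N/L)|$ equals $1$, and to conclude that $L\cap gN = gL$. Both conclusions are false in general. Take $R=\mathbb{Z}$, $N=\mathbb{Z}$, $L=2\mathbb{Z}$, $g=2$: then $\mu_g$ is injective on $N$ and $N/L\cong\mathbb{Z}/2\mathbb{Z}$ is finite, but $\bar\mu_g$ on $N/L$ is the zero map, $(N/L)/g(N/L)$ has order $2$, and $L\cap gN = 2\mathbb{Z} \ne 4\mathbb{Z} = gL$. (The lemma's conclusion $|N/gN|=|L/gL|=2$ does hold here, but not for the reasons you give.) The issue is that $gn\in L$ does not imply $gn=0$, so injectivity of $\mu_g$ on $N$ tells you nothing directly about the kernel of $\bar\mu_g$.

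The missing idea is this: since $N/L$ is \emph{finite}, any endomorphism of it has $|\ker| = |\operatorname{coker}|$; you cannot get $\ker\bar\mu_g = 0$, but you \emph{can} trade cokernel size for kernel size. Write $K = \{n\in N : gn\in L\}$, so $\ker\bar\mu_g = K/L$. The map $K/L \to (L\cap gN)/gL$, $n+L\mapsto gn+gL$, is well-defined and surjective, and it is injective precisely because $\mu_g$ is a monomorphism on $N$ (if $gn\in gL$ then $gn=g\ell$ forces $n=\ell\in L$). Hence
\[
\Bigl|\frac{N/L}{g(N/L)}\Bigr| = |\ker\bar\mu_g| = \Bigl|\frac{L\cap gN}{gL}\Bigr|,
\]
and substituting into part (1) and using the tower $gL\subseteq L\cap gN\subseteq L$ gives
\[
\Bigl|\frac{N}{gN}\Bigr| = \Bigl|\frac{L\cap gN}{gL}\Bigr|\cdot\Bigl|\frac{L}{L\cap gN}\Bigr| = \Bigl|\frac{L}{gL}\Bigr|.
\]
Alternatively one can argue directly with the index tower $gL\subseteq gN\subseteq N$ versus $gL\subseteq L\subseteq N$, noting $[gN:gL]=[N:L]$ since $\mu_g:N\to gN$ is an isomorphism carrying $L$ onto $gL$, and cancelling the finite factor $[N:L]$. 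Either way, injectivity on $N/L$ is neither available nor needed.
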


\begin{lem}\cite{Mi}\label{miles2}
Let $N$ be an $R$-module for which $Ass(N)$ consists 
of finitely many non-trivial principal ideals and suppose
$$ m(\mathfrak{p})= \dim_{\mathbb{K}(\mathfrak{p})}N_{\mathfrak{p}} < \infty, $$ where $\mathbb{K}(\mathfrak{p})$ denotes the  field of fractions of $R/\mathfrak{p}$ and $N_{\mathfrak{p}} = N\otimes_R\mathbb{K}(\mathfrak{p})$ is the localization of the module $N$ at $\mathfrak{p}$ .
  If $g\in R$ is such that the map $x \to gx$ is a monomorphism of $N$, then
 $N/gN$ is finite.
 \end{lem}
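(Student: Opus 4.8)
The plan is to reduce the statement, by localization, to the finitely generated case, where it is classical. First I would dispose of some formal reductions. If $g=g_1g_2$ in $R=\mathbb{Z}[t]$, then $x\mapsto g_1x$ and $x\mapsto g_2x$ are again monomorphisms of $N$, the first inducing an isomorphism $N\xrightarrow{\sim}g_1N$ that carries $g_1g_2N$ onto $g_2N$, whence $|N/gN|=|N/g_1N|\cdot|N/g_2N|$; so one may assume $g$ irreducible. Next, since every prime in $\operatorname{Ass}(N)$ is a non-zero principal ideal $(f_i)$, the module $N$ is $R$-torsion: an $x\in N$ with $\operatorname{ann}(x)=0$ would give $Rx\cong R$ and hence $(0)\in\operatorname{Ass}(N)$. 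Likewise $\operatorname{Supp}(N)=\bigcup_iV(f_i)$ is pure of dimension one, and, because $x\mapsto gx$ is injective, $g\notin\operatorname{ann}(x_i)=(f_i)$ for a generator $x_i$ of the copy $R/(f_i)\hookrightarrow N$ furnished by $(f_i)\in\operatorname{Ass}(N)$; thus $g$ is coprime in the UFD $\mathbb{Z}[t]$ to each $f_i$.

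It follows that $\operatorname{Supp}(N/gN)=\operatorname{Supp}(N)\cap V(g)\subseteq\bigcup_iV\!\bigl((f_i)+(g)\bigr)$, and since $(f_i)$ and $(g)$ are coprime in the two-dimensional ring $\mathbb{Z}[t]$ each $V\!\bigl((f_i)+(g)\bigr)$ is zero-dimensional, i.e.\ a finite set of maximal ideals; as $\mathbb{Z}[t]$ is a finitely generated $\mathbb{Z}$-algebra, these maximal ideals all have finite residue fields. Writing $\operatorname{Supp}(N/gN)=\{\mathfrak{m}_1,\dots,\mathfrak{m}_r\}$, one has $N/gN=\bigoplus_{j}(N/gN)_{\mathfrak{m}_j}$, so it suffices to prove that each $(N/gN)_{\mathfrak{m}_j}=N_{\mathfrak{m}_j}/gN_{\mathfrak{m}_j}$ has finite length over $R_{\mathfrak{m}_j}$, finiteness then following from finiteness of $R/\mathfrak{m}_j$.

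Fix $\mathfrak{m}=\mathfrak{m}_j$, put $A=R_{\mathfrak{m}}$ --- a two-dimensional regular local ring, hence a UFD whose localizations at height-one primes are discrete valuation rings --- and $M=N_{\mathfrak{m}}$. Then $M$ is $A$-torsion, $\operatorname{Ass}_A(M)$ is a finite set of height-one primes $\mathfrak{q}_i=(f_i)A$, the element $g$ lies in none of them, $g$ is a non-zero-divisor on $M$, and the hypothesis gives $\dim_{\mathbb{K}(\mathfrak{q}_i)}M_{\mathfrak{q}_i}/f_iM_{\mathfrak{q}_i}<\infty$ for each $i$. I would invoke the structure fact that a torsion module $P$ over a discrete valuation ring with $\dim_kP/\pi P<\infty$ splits as $P=P_{\mathrm{fin}}\oplus P_{\mathrm{div}}$ with $P_{\mathrm{fin}}$ finitely generated and $P_{\mathrm{div}}$ divisible, hence injective. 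Applying this to each $M_{\mathfrak{q}_i}$ and clearing denominators, choose finitely many elements of $M$ whose images generate the finitely generated summands of all the $M_{\mathfrak{q}_i}$, and let $L\subseteq M$ be the finitely generated submodule they span. Then Lemma~\ref{miles1}(1) gives
\[
|M/gM|\;=\;\Bigl|\frac{M/L}{g(M/L)}\Bigr|\cdot\Bigl|\frac{L}{L\cap gM}\Bigr|,
\]
and the second factor is finite, being at most $|L/gL|$: here $L$ is a finitely generated $A$-torsion module on which $g$ is a non-zero-divisor, so $\operatorname{Supp}(L/gL)=V(\operatorname{ann}_AL+gA)$ is a finite set of maximal ideals and $L/gL$ is a finitely generated module of finite length. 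For the first factor, by construction $(M/L)_{\mathfrak{q}_i}$ is divisible over $A_{\mathfrak{q}_i}$ for every $i$; since moreover $g$ is coprime to the $f_i$, the quotient $(M/L)/g(M/L)$ is supported only at $\mathfrak{m}A$, and one shows that, the divisible directions being injective $A_{\mathfrak{q}_i}$-modules and hence $g$-divisible, $(M/L)/g(M/L)$ is a finitely generated $\mathfrak{m}$-torsion module, so of finite length.

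The main obstacle is exactly this last step: transferring the tameness of $M$ at its height-one associated primes to control of $M=N_{\mathfrak{m}}$ at the closed point $\mathfrak{m}$. This is where it matters that $\operatorname{Ass}_A(M)$ consists solely of height-one primes, so that $M$ has no $\mathfrak{m}$-torsion, $\Gamma_{\mathfrak{m}}(M)=0$; combined with $0\to M\xrightarrow{g}M\to M/gM\to0$ this yields $M/gM\cong H^1_{\mathfrak{m}}(M)[g]$, and once the finitely generated summand $L$ has been split off one can bound this, verifying that the divisible remainder contributes nothing after killing $g$. Everything else in the argument is the bookkeeping of supports together with the elementary multiplicativity of $|N/gN|$ afforded by Lemma~\ref{miles1}.
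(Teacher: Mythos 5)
The paper itself contains no proof of this lemma --- it is quoted from Miles \cite{Mi} --- so your argument can only be judged on its own terms. Your preparatory reductions are sound: the reduction to $g$ irreducible, the observation that $N$ is $R$-torsion with $\operatorname{Supp}(N)=\bigcup_i V(f_i)$ and $g$ coprime to each $f_i$, the localization of $N/gN$ at the finitely many maximal ideals in its support, and the splitting $|M/gM|=\bigl|\tfrac{M/L}{g(M/L)}\bigr|\cdot\bigl|\tfrac{L}{L\cap gM}\bigr|$ with the second factor finite. But the proof fails at exactly the step you flag as ``the main obstacle'': the finiteness of $(M/L)/g(M/L)$ is asserted (``one shows that\dots'', ``one can bound this\dots'') and never proved. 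The properties you actually establish --- that $(M/L)_{\mathfrak{q}_i}$ is divisible and that the quotient by $g$ is supported at $\mathfrak{m}$ --- do not imply finiteness: the module $\bigoplus_{k\ge 1}A/(\mathfrak{q}+g^kA)$ has zero (hence divisible) localization at $\mathfrak{q}$ and is supported at $\mathfrak{m}$, yet its quotient by $g$ is an infinite direct sum of copies of $A/\mathfrak{m}$.

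Moreover, this gap cannot be closed from the hypotheses as stated, because the lemma as transcribed is missing a finiteness assumption on $N$. Take $R=\mathbb{Z}[t]$ and $N=\mathbb{Z}(p^\infty)\otimes_{\mathbb{Z}}\mathbb{Z}[t]=\bigoplus_{n\ge 0}\mathbb{Z}(p^\infty)\,t^n$. Every non-zero element has annihilator a power of $(p)$, so $Ass(N)=\{(p)\}$ is a single non-trivial principal ideal; $N$ is $p$-divisible, so $m((p))=\dim_{\mathbb{F}_p(t)}N\otimes_R\mathbb{F}_p(t)=0$; and multiplication by $g=t^j-1$ is injective. Yet $N/gN\cong N\otimes_R R/(t^j-1)\cong\mathbb{Z}(p^\infty)^{\,j}$ is infinite (the same happens over $\mathbb{Z}[t^{\pm1}]$ with the two-sided shift, which is an automorphism). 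In your local picture this is precisely the case $L=0$, $M=M_{\mathrm{div}}$: the divisible remainder \emph{does} contribute after killing $g$. What rescues the statement is the hypothesis that $N$ be a Noetherian $R$-module, which appears as condition (1) of Theorem~\ref{Main} and underlies the filtration argument reproduced there (a prime filtration of $U^{-1}N$ over the semilocal PID $U^{-1}R$, intersected back with $N$, followed by Lemma~\ref{miles1}). With that hypothesis added, your elaborate construction is unnecessary: $M$ is then finitely generated, and $M/gM$ is a finitely generated module supported at finitely many maximal ideals with finite residue fields, hence finite. So you should either import the Noetherian hypothesis explicitly or supply the missing argument for the divisible part; as written, neither is done and the decisive step is unsupported.
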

 If ~$\mathfrak{p}\subset R$ is a principal prime ideal,
 $\mathbb{K}(\mathfrak{p})$ above
 is a global field.
 
Additionally, if  every element of a module ~$N$ has finite
additive order then every associated
prime ideal~$\mathfrak{p}\in Ass(N)$ contains a rational prime. The coheight of a prime ideal~$\mathfrak{p}\subset R$, denoted~$\coht(\mathfrak{p})$, coincides with the Krull dimension of the domain~$R/\mathfrak{p}$ and  ~$\coht(\mathfrak{p})\leqslant 1$ for all~$\mathfrak{p}\in Ass(N)$. Our results are phrased in terms of global fields $\mathbb{K}$ hence if  every element of a $R$-module ~$N$ has finite
additive order ( this is the case when the  group $G$ above is a torsion abelian non-finitely generated group) then global field $\mathbb{K}(\mathfrak{p})$ will be 
a function field of transcedence degree one ( i.e. the Krull dimension 1) over a finite field.

Recall that a \emph{global field}~$\mathbb{K}$ of characteristic~$p>0$ is a finite extension of the rational function field~$\mathbb{F}_p(t)$, where~$t$ is an indeterminate. The \emph{places} of~$\mathbb{K}$ are the equivalence classes of absolute values on~$\mathbb{K}$, which are all non-archimedean.
For example, the \emph{infinite place} of~$\mathbb{F}_p(t)$ is given by~$|f/g|_\infty=p^{\deg(f)-\deg(g)}$ and all other places of~$\mathbb{F}_p(t)$ correspond, in the usual way, to valuation rings obtained by localizing the domain~$\mathbb{F}_p[t]$ at its non-trivial prime ideals (generated by irreducible polynomials). The places of~$\mathbb{K}$ are extensions of those just described,
and the set of all such places is denoted~$\mathcal{P}(\mathbb{K})$.
Given a finite place of $\mathbb{K}$, there
corresponds a unique discrete valuation $v$ whose precise value group is $\mathbb{Z}$.
The corresponding normalised absolute value $|\cdot|_v= |\mathcal{R}_v|^{-v(\cdot)}$,
where $\mathcal{R}_v$ is the (necessarily finite) residue class field of $v$.
For any set
of places~$S$, we
write~$|x|_S=\prod_{v\in S}|x|_v$.
Such sets of places provide a foundation for the formulas for the Reidemeister numbers of iterations of an automorphism of  a torsion abelian non-finitely generated group presented here.

The main results of this section
are the  formulas for the Reidemeister numbers and a dichotomy between rationality and a natural boundary for the analytic behaviour of the Reidemeister zeta function.
We  use  methods  of  Bell,  Miles and  Ward    to study the Artin-Mazur zeta function    of compact abelian groups automorphisms  in ~\cite[Theorem 15]{BMW},\cite[Theorem 1.1]{MW}  .

\begin{teo}\label{Main}
Let $\phi: G \rightarrow G$  be an automorphism of  a torsion abelian non-finitely generated group $G$.
Suppose that the group $G$ as  $R = \mathbb{Z}[t]$-module  satisfies the following conditions:

(1) $G$ is a Noetherian $R = \mathbb{Z}[t]$-module and its finite set of associated primes $Ass(G)$ consists entirely of non-zero
principal ideals of   the polynomial  ring $R = \mathbb{Z}[t]$,

(2) the map $g \rightarrow (t^j -1)g$  is a monomorphism of $G$ for all $j\in \mathbb{N}$\\
(equivalently, $t^j - 1 \notin \mathfrak{p}$ for all $ \mathfrak{p}\in Ass(G)$ and all $j\in
\mathbb{N}$),

(3) for each $\mathfrak{p}\in Ass(G)$, $ m(\mathfrak{p})= \dim_{\mathbb{K}(\mathfrak{p})}G_{\mathfrak{p}} < \infty$.

Then there exist  function fields ~$\mathbb{K}_1,\dots,\mathbb{K}_n$ of the form $\mathbb{K}_i=\mathbb{F}_{p(i)}(t)$, where each $p(i)$ is a rational prime, sets
of finite places~$P_i\subset \mathcal{P}(\mathbb{K}_i)$, sets of infinite places
$P_i^\infty \subset\mathcal{P}(\mathbb{K}_i)$, sets
of finite places~
 $S_i=\mathcal{P}(\mathbb{K}_i)\setminus
({P_i^\infty\cup  P_i})$, such that 
\begin{equation}\label{Reidemeister1}
R(\phi^j) = \prod_{i=1}^{n}\prod_{v\in P_i}
|t^j-1|_{v}^{-1} = \prod_{i=1}^{n}|t^j-1|_{P_i}^{-1}  =
\prod_{i=1}^{n} |t^j-1|_{P_i^\infty\cup  S_i}
\end{equation}
 for all $j\in \mathbb{N}$.
 \end{teo}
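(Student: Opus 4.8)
The plan is to compute $R(\phi^j)$ by reducing to a counting problem for $R$-modules and then repackaging the answer in terms of absolute values on global function fields. First I would recall the standard identification of the Reidemeister number with the order of a coinvariant group: for an automorphism $\phi$ of an abelian group $G$, two elements are $\phi$-conjugate iff they differ by an element of $(\phi-\mathrm{id})(G)$, so $R(\phi)=|G/(\phi-\mathrm{id})G|=|\coker(\phi-\mathrm{id})|$, and more generally $R(\phi^j)=|G/(\phi^j-\mathrm{id})G|$. In $R=\mathbb{Z}[t]$-module language this is exactly $|G/(t^j-1)G|$. Condition (2) guarantees that $x\mapsto (t^j-1)x$ is a monomorphism of $G$, and conditions (1) and (3) let us invoke Lemma \ref{miles2} (with $g=t^j-1$, using that $t^j-1\notin\mathfrak{p}$ for all $\mathfrak{p}\in Ass(G)$) to conclude $G/(t^j-1)G$ is finite. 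Hence $R(\phi^j)=|G/(t^j-1)G|<\infty$ for every $j$, so $\phi$ is tame and the quantity we must evaluate is well-defined and finite.

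Next I would pass to a structural decomposition of $G$ that makes $|G/(t^j-1)G|$ computable. Since $G$ is Noetherian over $R$ with $Ass(G)$ a finite set of non-zero principal primes, one takes a filtration $0=G_0\subset G_1\subset\cdots\subset G_m=G$ whose successive quotients $G_k/G_{k-1}$ are each isomorphic to $R/\mathfrak{p}_k$ for some $\mathfrak{p}_k\in Ass(G)$ (a prime cyclic filtration). Because every element of $G$ has finite additive order, each such $\mathfrak{p}_k$ contains a rational prime $p(k)$, so $R/\mathfrak{p}_k$ is a domain of Krull dimension $\le 1$ and characteristic $p(k)>0$; its field of fractions $\mathbb{K}(\mathfrak{p}_k)$ is a global function field, and by condition (3) only finitely many $\mathfrak{p}$ occur with finite multiplicity $m(\mathfrak{p})$. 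Using Lemma \ref{miles1}(1) repeatedly along the filtration, $|G/(t^j-1)G|$ is multiplicative over the graded pieces up to the ``boundary'' terms $|L/(L\cap (t^j-1)N)|$; the monomorphism hypothesis (2) together with Lemma \ref{miles1}(2) is exactly what kills those correction terms and yields
\[
R(\phi^j)=\prod_{\mathfrak{p}\in Ass(G)}\bigl|(R/\mathfrak{p})/(t^j-1)(R/\mathfrak{p})\bigr|^{\,m(\mathfrak{p})}.
\]
Here I am following the $\mathbb{Z}[t]$-module bookkeeping of Miles and of Bell--Miles--Ward; the role of $m(\mathfrak{p})$ is to record how many times the prime $\mathfrak{p}$ appears as a subquotient.

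Finally I would translate each local factor into a product of normalised absolute values. Fix $\mathfrak{p}\in Ass(G)$ with residue characteristic $p$; since $\mathfrak{p}$ is principal and contains $p$, the domain $R/\mathfrak{p}$ is an order in the global field $\mathbb{K}:=\mathbb{K}(\mathfrak{p})$, which after identifying the relevant variable we may write as $\mathbb{F}_{p}(t)$ (absorbing the multiplicity $m(\mathfrak{p})$ by listing $\mathbb{K}_i=\mathbb{F}_{p(i)}(t)$ with repetition, $i=1,\dots,n$). For an element $h=t^j-1\in \mathbb{K}^\times$ that is a non-zero-divisor on $R/\mathfrak{p}$, the index $|(R/\mathfrak{p})/h(R/\mathfrak{p})|$ equals $\prod_{v\in P_i}|h|_v^{-1}$ where $P_i$ is the finite set of finite places of $\mathbb{K}_i$ at which $R/\mathfrak{p}$ is ``larger than'' the valuation ring, i.e. the places dividing the conductor/support of the cokernel — concretely the finite places $v$ with $|h|_v<1$ adjusted by the places where $R/\mathfrak{p}$ fails to be the full ring of $v$-integers. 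This is the standard formula expressing a module index as a product of local indices, and it gives the first equality in \eqref{Reidemeister1}; the second is just the notation $|x|_{P_i}=\prod_{v\in P_i}|x|_v$. For the third equality one uses the product formula $\prod_{v\in\mathcal{P}(\mathbb{K}_i)}|h|_v=1$: writing $\mathcal{P}(\mathbb{K}_i)=P_i^\infty\sqcup P_i\sqcup S_i$ gives $|h|_{P_i}^{-1}=|h|_{P_i^\infty\cup S_i}$, valid for all $j$ since $t^j-1\neq 0$. Taking the product over $i=1,\dots,n$ yields \eqref{Reidemeister1}.

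The main obstacle is the middle step: making the reduction to the prime-cyclic filtration genuinely rigorous and controlling the correction terms from Lemma \ref{miles1}(1), i.e. verifying that the monomorphism hypothesis (2) propagates to every submodule and subquotient in the filtration so that Lemma \ref{miles1}(2) applies and the boundary indices vanish. Equally delicate is pinning down, for each $\mathfrak{p}$, the exact set $P_i$ of finite places contributing to the local index — this requires identifying $R/\mathfrak{p}$ with a concrete order inside $\mathbb{F}_{p(i)}(t)$ and tracking its conductor, which is where the clean statement ``$P_i^\infty\cup S_i = \mathcal{P}(\mathbb{K}_i)\setminus P_i$'' with an explicit split into infinite places $P_i^\infty$ and the remaining finite places $S_i$ comes from. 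Everything after that is the product formula for global fields, which is routine.
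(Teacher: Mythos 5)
The core gap is at the filtration step: you assert that a Noetherian $R$-module $G$ whose associated primes are non-zero principal ideals automatically admits a prime-cyclic filtration $0=G_0\subset\cdots\subset G_m=G$ with every quotient $G_k/G_{k-1}\cong R/\mathfrak{p}_k$ for $\mathfrak{p}_k\in Ass(G)$. This is false. A prime filtration over $R=\mathbb{Z}[t]$ produces quotients $R/\mathfrak{q}$ with $\mathfrak{q}$ ranging over $\mathrm{Supp}(G)$, which in general contains primes strictly larger than those in $Ass(G)$; in the present setting the associated primes have height $1$ in the $2$-dimensional ring $R$, so maximal ideals of the form $(p,f(t))$ can occur as annihilators of subquotients even though they are neither associated nor principal, and your reduction to a product indexed by $Ass(G)$ collapses. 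The paper circumvents this by localizing first: with $U=\bigcap_{\mathfrak{p}\in Ass(G)}(R\setminus\mathfrak{p})$, the ring $\mathfrak{R}=U^{-1}R$ is a semi-local principal ideal domain whose only non-zero primes are the $\mathfrak{p}\mathfrak{R}$ with $\mathfrak{p}\in Ass(G)$; one takes a prime filtration of $U^{-1}G$ over $\mathfrak{R}$ (where the quotients are forced to be $\mathfrak{R}/\mathfrak{p}\mathfrak{R}$ with $\mathfrak{p}\in Ass(G)$) and then intersects it with the injective image of $G$ in $U^{-1}G$. The resulting graded pieces $N_i=L_i/L_{i-1}$ are not copies of $R/\mathfrak{p}_i$ but fractional ideals of $E_i=R/\mathfrak{p}_i$ inside $\mathbb{K}(\mathfrak{p}_i)$, and the finite sets of places $P_i$ in the theorem are defined via these fractional ideals, not via $\mathfrak{p}_i$ alone. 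Your tidy formula $R(\phi^j)=\prod_{\mathfrak{p}}\bigl|(R/\mathfrak{p})/(t^j-1)(R/\mathfrak{p})\bigr|^{m(\mathfrak{p})}$ would need a separate commensurability argument to justify, which in the paper is where Lemmas~\ref{miles1}(2) and~\ref{miles2} actually enter.

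A smaller inaccuracy: you attribute the vanishing of the boundary terms $|L_{i-1}/(L_{i-1}\cap(t^j-1)L_i)|$ to Lemma~\ref{miles1}(2). The paper instead argues directly from hypothesis (2): if $(t^j-1)y\in L_{i-1}$, then the image $\eta$ of $y$ in $N_i$ satisfies $(\xi_i^j-1)\eta=0$ inside the integral domain $E_i$, and since $t^j-1\notin\mathfrak{p}_i$ the element $\xi_i^j-1$ is non-zero, so $\eta=0$ and hence $L_{i-1}\cap(t^j-1)L_i=(t^j-1)L_{i-1}$. Lemmas~\ref{miles1}(2) and~\ref{miles2} are used later, to replace $N_i$ by a genuine $E_i$-fractional ideal $I_i$ and compute $|N_i/(\xi_i^j-1)N_i|$ as a product of local indices. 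Your final passage through normalised absolute values, the partition $\mathcal{P}(\mathbb{K}_i)=P_i^\infty\sqcup P_i\sqcup S_i$, and the Artin product formula does match the paper.
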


\medskip
\begin{proof}

 The  Reidemeister number of an endomorphism $\phi$ of 
 an Abelian group $G$  coincides with the cardinality of the  quotient group 
 $ \coker(\phi-\Id_G)=G/{\rm Im}(\phi-\Id_G)$
(or $\coker(\Id_G -\phi)=G/{\rm Im}(\Id_G-\phi)$).

The multiplicative set $ U = \bigcap_{\mathfrak{p}\in Ass(G)} R-{\mathfrak{p}}$
has $U\cap ann(a) = \emptyset$ for all non-zero $a\in G$, so the natural map $G\to U^{-1}G$
is a monomorphism. Identifying localizations of $R$ with subrings of $\mathbb{Q}(t)$, the
domain $ \mathfrak{R}=U^{-1}R = \bigcap_{\mathfrak{p}\in Ass(G)} R_\mathfrak{p}$ is a finite intersection
of discrete valuation rings and is therefore a principal ideal domain \cite{Matsumura}.
The assumptions (1) - (3) force $U^{-1}G $ to be a Noetherian
$ \mathfrak{R}$ - module. Hence, there is a prime filtration
$$\{0\} = G_0 \subset G_1\subset \cdot \cdot\cdot \subset G_n = U^{-1} G $$
in which $ G_i/G_{i-1} \cong \mathfrak{R}/\mathfrak{q_i}$
for non-trivial primes $\mathfrak{q_i} \subset \mathfrak{R}, 1\leq i\leq n$.
Moreover, $\mathfrak{p_i}=\mathfrak{q_i}\cap R\in Ass(G)$ for all $1\leq i\leq n$.
Identifying $G$ with its image in $U^{-1}G$ and intersecting the chain above with $G$
gives a chain 

$$\{0\} = L_0 \subset L_1\subset \cdot \cdot \cdot\subset L_n = G .$$
Considering this chain of $R$-modules, for each $1\leq i\leq n$ there is an induced inclusion
$$
 \frac{L_i}{L_{i-1}} \hookrightarrow    \frac{G_i}{G_{i-1}} \cong \frac{\mathfrak{R}}{\mathfrak{q_i}}\cong
 \mathbb{K(\mathfrak{p_i})}= K_i
$$ where field $\mathbb{K}(\mathfrak{p_i})$ is 
a function field of transcedence degree one (i.e. the Krull dimension 1) over a finite field 
and $N_i = L_i/L_{i-1}$ may be considered as a fractional ideal of $E_i = R/\mathfrak{p_i}$.
Using Lemma \ref{miles1}(1),
$$ \Bigl|\frac{L_i}{(t^j - 1)L_i}\Bigr | = \Bigl|\frac{N_i}{(t^j - 1)N_i} \Bigr |\Bigl|\frac{L_{i-1}}{L_{i-1} \cap (t^j-1)L_i} \Bigr |,$$
where $1\leq i\leq n$. Let $y\in L_i$, let $\eta$ denote the image of $y$ in $N_i$ and let $\xi_i$
denote the image of $t$ in $E_i$. If $(t^j-1)y\in L_{i-1}$ then $(\xi_i^j-1)\eta = 0$.
An assumption (2) implies $ t^j-1\notin \mathfrak{p_i}$ so $(\xi_i^j-1)\neq 0$. 
Therefore, $\eta=0$ and $y\in L_{i-1}$. It follows that $L_{i-1} \cap (t^j-1)L_i = (t^j-1)L_{i-1}$
and hence,

$$ \Bigl| \frac{L_i}{(t^j-1)L_i}\Bigr | = \Bigl|\frac{N_i}{(t^j-1)N_i}\Bigr |\Bigl|\frac{L_{i-1}}{ (t^j-1)L_{i-1}} \Bigr |,$$

Successively applying this formula to each of the modules $L_i,1\leq i\leq n$, gives,
$$ | G/(t^j-1)G| = \prod_{i=1}^{n}|N_i/(t^j-1)N_i|$$

Consider now an individual  term $|N_i/(t^j-1)N_i|$. Since char $(E_i)>0$, $E_i\cong \mathbb{F}_{p}[t]$ for some rational prime $p$ and $E_i$ is a finitely generated Dedekind domain.
 We may consider $I_i = E_i\otimes_{E_i}N_i$ as a
fractional ideal of $E_i$. Lemma \ref{miles2} and  Lemma \ref{miles1}(2) imply that
$|N_i/(\xi_i^j-1)N_i|= |I_i/(\xi_i^j-1)I_i|$ and are finite (see \cite{Mi}).
By considering $I_i/(\xi_i^j-1)I_i$ as a $E_i$-module, finding a composition series for this module and    successively localizing at each of its associated primes to obtain multiplicities, it follows that
$$|I_i/(\xi_i^j-1)I_i| = \prod_{\mathfrak{m}\in Ass(I_i/(\xi_i^j-1)I_i)}q_{\mathfrak{m}}^{\delta_\mathfrak{m}(\xi_i,I_i)},$$ where $q_{\mathfrak{m}}= |E_i/\mathfrak{m}|$ and $\delta_\mathfrak{m}(\xi_i,I_i) = \dim_{E_i/\mathfrak{m}}(I_i/(\xi_i^j-1)I_i)_\mathfrak{m}$. Let 
$$P_i=\{\mathfrak{m}\in Spec(E_i) : I_{\mathfrak{m}} \neq K_i\}.$$
 It follows that the product above may be taken over all $\mathfrak{m}\in P_i$ to yield the same result. Each localization $(E_i)_{\mathfrak{m}}$ is a distinct valuation ring of $K_i$ and $P_i$ may be identified with a set of finite places of the global field $K_i$.
Hence, since $\delta_\mathfrak{m}(\xi_i,E_i) = v_\mathfrak{m}(\xi_i^j-1)$, finally we have  
$$ |I_i/(\xi_i^j-1)I_i| = \prod_{\mathfrak{m}\in P_i}q_{\mathfrak{m}}^{\delta_\mathfrak{m}(\xi_i,E_i)}= \prod_{\mathfrak{m}\in P_i}q_{\mathfrak{m}}^{v_\mathfrak{m}(\xi_i^j-1)} = \prod_{\mathfrak{m}\in P_i}|\xi_i^j-1 |_{\mathfrak{m}}^{-1},$$
where $|\cdot|_{\mathfrak{m}}$ is the normalised absolute value arising from $E_{\mathfrak{m}}$. 
This concludes the proof
of the formula $R(\phi^j) = \prod_{i=1}^{n}\prod_{v\in P_i}
|\xi_i^j-1|_{v}^{-1} = \prod_{i=1}^{n}|\xi_i^j-1|_{P_i}^{-1} $.

 Applying the Artin product formula \cite{Weil}
  gives
\begin{equation}\label{main_formula2}
R(\phi^j)= \prod_{i=1}^{n}|\xi_i^j-1|_{P_i}^{-1}=
\prod_{i=1}^{n}
|\xi_i^j-1|_{P_i^\infty\cup S_i}.
\end{equation}

\end{proof}

\medskip
We remind the definition of a natural boundary.
\begin{dfn} 
\rm
Suppose that an analytic function $F$ is defined somehow in a region $D$
of the complex plane.
If there is no point of the boundary $\partial D$ of $D$ over which $F$ can be analytically continued, then 
$\partial D$ is called a \emph{natural boundary} for $F$  . 
\end{dfn}

\begin{teo}\label{dichotomy}
Let $\phi:G\rightarrow G$ be an automorphism of a torsion abelian non-finitely generated group $G$ and define a number $h(\phi)=\sum_{i=1}^{r}\log p(i)$,  where the rational primes $p(i)$ are the same as in   Theorem \ref{Main}. Suppose that the group $G$ as an $R=\mathbb{Z}[t]$-module satisfies the conditions (1)--(3) in   Theorem \ref{Main}. If the Reidemeister zeta function $R_\phi(z)$ has a radius of convergence $R=e^{-h(\phi)}$, then it is either a rational function, namely
$$R_\phi(z)=(1-e^{h(\phi)}z)^{-1}$$
or the circle $|z|=e^{-h(\phi)}$ is a natural boundary for
the function  $R_\phi(z)$.
\end{teo}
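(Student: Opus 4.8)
The plan is to feed the place-theoretic formula of Theorem~\ref{Main} into the analytic machinery of Bell, Miles and Ward; conceptually this machinery applies because, by Pontryagin duality, $R_\phi(z)$ is the Artin--Mazur zeta function of the dual automorphism $\hat\phi$ of the compact metrizable abelian group $\hat G$, the number $R(\phi^j)$ being the count of points fixed by $\hat\phi^j$. \textbf{Step 1 (isolate the dominant exponential).} Each $\mathbb{K}_i=\mathbb{F}_{p(i)}(t)$ has a unique place at infinity, at which $|t^j-1|=p(i)^j$, whereas every finite place $v$ of $\mathbb{F}_{p(i)}[t]$ satisfies $|t^j-1|_v=p(i)^{-d_v p(i)^{e}}$ when the multiplicative order $m_v$ of the roots of the associated irreducible divides $j$ (here $d_v$ is the residue degree and $e$ the exponent of $p(i)$ in $j$), and $|t^j-1|_v=1$ otherwise. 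Discarding any index $i$ with $P_i=\emptyset$ (which contributes $1$ by the Artin product formula) and pulling the infinite places out of $R(\phi^j)=\prod_i|t^j-1|_{P_i^\infty\cup S_i}$ gives $R(\phi^j)=e^{h(\phi)j}c_j$, where $e^{h(\phi)}$ is a positive integer (a product of primes $p(i)$) and $c_j=\prod_i|t^j-1|_{T_i}\in(0,1]$ runs over the sets $T_i$ of finite places not lying in $P_i$. The hypothesis that $R_\phi$ has radius of convergence $e^{-h(\phi)}$ is then equivalent to $\limsup_j c_j^{1/j}=1$.

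\textbf{Step 2 (dichotomy split and rigidity of the rational case).} If every $T_i$ is empty---equivalently every $S_i=\emptyset$, equivalently each $P_i$ is the full set of finite places of $\mathbb{K}_i$---then $c_j\equiv 1$, hence $R(\phi^j)=e^{h(\phi)j}$ and $R_\phi(z)=\exp\!\left(\sum_{j\ge 1}\tfrac{e^{h(\phi)j}}{j}z^j\right)=(1-e^{h(\phi)}z)^{-1}$, the asserted rational function. Otherwise some $T_i$ contains a place $v$, and along $j=m_v p(i)^{k}$ we get $c_j\le p(i)^{-d_v p(i)^{k}}\to 0$, so $c_j$ assumes infinitely many distinct values and $(c_j)$ is not eventually periodic; since rationality of $R_\phi$ would force $(c_j)$ to be eventually periodic, no rational function other than $(1-e^{h(\phi)}z)^{-1}$ can occur. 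It remains to upgrade ``non-rational'' to ``natural boundary on $|z|=e^{-h(\phi)}$''.

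\textbf{Step 3 (product representation and dense singularities).} Sorting the indices $j$ by their divisibility by the periods $m_v$ and by the power of each $p(i)$ dividing them, and summing the resulting logarithmic series with the help of $\sum_{p\nmid\ell}w^\ell/\ell=-\log(1-w)+\tfrac1p\log(1-w^p)$, one rewrites $R_\phi(z)=(1-e^{h(\phi)}z)^{-1}\prod_{k\ge 1}\bigl(1-a_k z^{N_k}\bigr)^{\alpha_k}$, where for each bad place $v$ the $z$-degrees $N_k$ run through $m_v p(i)^{k}$ with $k\to\infty$, $a_k>0$ with $|a_k z^{N_k}|=1$ precisely on $|z|=e^{-h(\phi)}$, so that the zeros of the $k$-th factor are $e^{-h(\phi)}$ times the $N_k$-th roots of unity---lying on the circle of convergence and accumulating densely on it as $k\to\infty$---while the $\alpha_k$ are non-integral rationals tending to $0$ doubly-exponentially fast (because of the wild factor $p(i)^{e}$). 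One then invokes the natural-boundary criterion of Bell--Miles--Ward (see \cite[Theorem~15]{BMW} and \cite[Theorem~1.1]{MW}): a convergent infinite product of this shape, whose factors contribute branch points accumulating densely on the circle of convergence with exponent contributions that do not cancel at a dense set of those points, cannot be analytically continued across that circle, which is therefore a natural boundary for $R_\phi$.

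\textbf{Main obstacle.} The crux is this last criterion. Because the $\alpha_k$ decay so rapidly, one must show that the branching nonetheless persists: that at a dense set of boundary points the sum of the exponents of the factors vanishing there is a non-integer, and that the product tails are controlled uniformly as $|z|\uparrow e^{-h(\phi)}$. This is exactly where the positive characteristic of the torsion case is essential, in contrast with the characteristic-zero situations of \cite{FZ2,FK}: the wild factors $p(i)^{e}$ simultaneously force a factor at every $z$-degree $m_v p(i)^{k}$, generating the dense branch points, and prevent $(c_j)$ from being periodic. The cleanest rigorous route is to reduce the statement, as in Bell--Miles--Ward, to their general theorem on zeta functions of compact group automorphisms applied to $\hat\phi\colon\hat G\to\hat G$, with Theorem~\ref{Main} supplying the explicit radius of convergence and the explicit rational function in the rational case.
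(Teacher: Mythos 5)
Your Steps 1 and 2 track the paper's setup: you isolate the dominant factor $e^{h(\phi)j}$, introduce the auxiliary sequence $c_j$ (the paper's $a_j$), observe that the radius hypothesis on $R_\phi$ corresponds to radius~$1$ for $F(z)=\sum a_j z^j$ via $F(e^{h(\phi)}z)=zR_\phi'(z)/R_\phi(z)$, and split into the rational case (all $a_j=1$) and the rest. Two small remarks here: the paper's degenerate case is $S_i=\{(t)\}$ rather than $S_i=\emptyset$ (both give $a_j\equiv1$, since $|t^j-1|_{(t)}=1$, but the condition as stated in the paper is the former); and your observation that $(c_j)$ is not eventually periodic along $j=m_v p(i)^k$ correctly rules out \emph{rationality} of $R_\phi$ beyond $(1-e^{h(\phi)}z)^{-1}$, but that is strictly weaker than a natural boundary, so on its own it does not discharge the theorem.

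The genuine gap is Step 3. The paper does \emph{not} proceed via a product representation of $R_\phi$ whose factors contribute branch points accumulating densely on the circle; it works entirely with the power series $F$. Having chosen a bad place $w\in S_m$ with $w\neq(t)$, it uses the uniform bound $a_j\le|t^j-1|_w$ together with the doubly-exponential decay $|t^{l_wp^k}-1|_w=p^{-d_wp^k}$ along $j_k=l_wp^k$ to show $\limsup_k a_{j_k}^{1/j_k}\le p^{-d_w/l_w}<1$. From this it deduces that $F$ is \emph{overconvergent} --- a subsequence of its partial sums converges uniformly on a disc strictly larger than the disc of convergence --- and then invokes the classical Ostrowski decomposition (see \cite{Segal}, \S 6.2): an overconvergent power series with radius~$1$ splits as a series of larger radius plus a Hadamard-lacunary series, and the latter has the unit circle as natural boundary. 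Pulling back through $F(e^{h(\phi)}z)=zR_\phi'(z)/R_\phi(z)$ finishes the proof. Your route --- re-summing into an Euler-type product, identifying branch points at $e^{-h(\phi)}$ times roots of unity, and then appealing to density of singular points with non-integer exponent sums --- is a plausible-sounding alternative, but as you yourself note in the ``Main obstacle'' paragraph, you have not shown that the rapidly decaying exponents $\alpha_k$ leave any singularity standing, nor that the product tails are controlled near the circle. That cancellation issue is exactly what the overconvergence argument sidesteps: it never needs to locate individual singularities, only to exhibit an overconvergent subsequence, which is forced by the place-theoretic decay estimate. In short, the framework you set up (Steps~1--2) is sound and matches the paper, but the proposal stops short of a proof; the missing ingredient is the overconvergence/lacunary-series argument, not a branch-point density count.
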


\begin{proof}
By Theorem \ref{Main}, there exist function fields $\mathbb{K}_1,...,\mathbb{K}_n$ of the form $\mathbb{K}_i=\mathbb{F}_{p(i)}(t)$, where each $p(i)$ is a rational prime and sets of finite places $P_i\subset \mathcal{P}(\mathbb{K}_i)$ for $i=1,...,n$, such that:
$$R(\phi^j)=\prod_{i=1}^{n}\prod_{v\in P_i}|t^j-1|_v^{-1}$$
for all $j\in\mathbb{N}$. If we denote for each ~$i=1,...,n $ by
$P_i^\infty \subset\mathcal{P}(\mathbb{K}_i)$ the infinite place and  by
 $S_i=\mathcal{P}(\mathbb{K}_i)\setminus
({P_i^\infty\cup  P_i})$ set
of remaining finite places then the above formula can be also expressed as
$$R(\phi^j)=\prod_{i=1}^{n}\prod_{v\in P_i^\infty\cup  S_i}|t^j-1|_v.$$ Since
$$\prod_{i=1}^{n}|t^j-1|_{P_i^\infty} = \prod_{i=1}^{n}p(i)^{deg(t^j-1)}  = \prod_{i=1}^{n} p(i)^j = e^{(\sum_{i=1}^{n} \log p(i))j} = e^{h(\phi)j},$$
then
$$R(\phi^j)=\prod_{i=1}^{n}\prod_{v\in P_i^\infty\cup  S_i}|t^j-1|_v = e^{h(\phi)j}\prod_{i=1}^{n}\prod_{v\in S_i}|t^j-1|_v.$$
Set $a_j = \prod_{v\in S_i}|t^j-1|_v$, define $F(z)=\sum_{j=1}^{\infty}a_jz^j$ and note that 
$$F(e^h(\phi)z) = zR'_\phi(z)/R_\phi(z).$$
By hypothesis $R_\phi(z)$ has a radius of convergence $e^{h(\phi)}$, so it follows that the radius of convergence of $F(z)$ is 1. 
If $S_i=\{(t)\}$ for all $i=1,..,n$, then $a_j=1$ and it follows immediately that $R_\phi(z) = (1-e^{h(\phi)}z)^{-1}$.\\
If not, there exists $1\leqslant m\leqslant n$ such that $S_m$ contains a place $w$ corresponding to a polynomial of degree $d_w\geqslant 1$ such that $w\neq t$, so that means that $w\nmid t$, equivalently $ord_w(t) = 0$ and thus $|t|_w=1$. Set $p=p(i)$ and $j_k=l_wp^k$, where $l_w$ denotes the multiplicative order of the image of $t$ in the finite residue field at $w$, i.e. $t^{l_w}=1$ . 
From the binomial expansion and the fact that $p=p(i)=$char$(\mathbb{K}_i)>0$ we obtain
$$t^{l_wp^k}-1=(t^{l_w}-1)^{p^k}=(w\cdot f(t))^{p^k}=w^{p^k}\cdot f(t)^{p^k}.$$
Since $w\nmid f(t)$, then $ord_w(t^{l_wp^k}-1)=p^k$. 
Let us denote the degree of $w$ as $d_w$.
Then
$$|t^{j_k}-1|_w=p^{-ord_w(t^{j_k}-1)\cdot deg w}=p^{-ord_w(t^{l_wp^k}-1)\cdot d_w}=p^{-p^k\cdot d_w}<1.$$
For all $1\leqslant j$ we have that $a_j\leqslant |t^j-1|_w$, hence
$$\limsup_{j_k\rightarrow\infty}a_{j_k}^{1/j_k}\leqslant p^{-d_w/l_w}.$$
Therefore $F$ contains a sequence of partial sums that is uniformly convergent for $|z|<p^{d_w/l_w}$. Hence, the series  $F$ is overconvergent, so series may be  written as a sum of a series convergent on $z<|p|$ and a lacunary series and hence the unit circle is a natural boundary for $F$ ( see \cite{Segal}, sec. 6.2). It follows that  the circle $|z|=e^{-h(\phi)}$ is a natural boundary for $R_\phi(z)$.
\end{proof}

\subsection{Examples}
To give an example of the dichotomy for analytic behavior of the Reidemeister zeta function we use  the  calculations  of Miles and Ward  in \cite{MW} for the number of periodic points of the dual compact
abelian group endomorphism $\widehat{\phi}$ .
 Let us consider a ring $M = \mathbb{F}_p[t^{\pm 1},(t-1)^{\pm 1}]$, and  an endomorphism $\phi: g \rightarrow tg$ which is the multiplication by $t$ on $M$. It follows from \cite{MW}, Example 4.1 that $M$ has a structure of a $\mathbb{Z}[t]$-module.
 Then using the  calculations  of Miles and Ward  in \cite{MW}, Example 3.1 and  the formula  \ref{Reidemeister1} for the Reidemeister numbers
$$R(\phi^j) = \prod_{i=1}^{n} |t^j-1|_{P_i^\infty\cup  S_i},$$ we obtain
$$R(\phi^j) = |t^j - 1|_\infty |t^j - 1|_{(t)} |t^j - 1|_{(t-1)}.$$
Since
$|t^j - 1|_\infty = p^j,$
$|t^j - 1|_{(t)} = p^{-ord_{t}(t^j-1)\cdot \deg t} = 1,$
\begin{equation*}
\begin{split}
|t^j - 1|_{(t-1)}  = p^{-ord_{(t-1)}(t^j-1)\cdot \deg (t-1)}
 = p^{-p^{ord_p(j)}}= p^{-|j|_p^{-1}},\\
\end{split}
\end{equation*}
then finally
$R(\phi^j) = p^{j-\upsilon(j)},$
where $\upsilon(j) = |j|_p^{-1}$.

By the Cauchy -- Hadamard
formula, the radius of convergence $R$ of the Reidemeister zeta function $R_\phi(z)$ is equal to
$$R = \left(\limsup_{j\rightarrow\infty}\sqrt[j]{\frac{p^j-\upsilon(j)}{j}}\right)^{-1} = p^{-1}.$$ 
Let us define $F(z) = \sum_{j=1}^\infty a_jz^j$, where $a_j = p^{-\upsilon(j)}$. Then the radius $R_F$ of convergence of $F$ is equal to
$$R_F = \left(\limsup_{j\rightarrow\infty}\sqrt[j]{p^{-\upsilon(j)}}\right)^{-1} = 1.$$
Moreover, $$F(pz) = zR'_\phi(z)/R_\phi(z),$$
so if the Reidemeister zeta function $R_\phi(z)$ has the analytic continuation beyond the circle $|z|=p^{-1}$, then $F$ has the analytic continutation beyond the unit circle. However, $F$ contains a sequence of partial sums that is uniformly convergent for $|z|<p$, since for $j_k=p^k$, we have $\limsup_{j_k\rightarrow\infty}a_{j_k}^{1/j_k} = p^{-1}$. It follows that the series $F$ is overconvergent, so it may be written as a sum of a series convergent on $|z|<p$  and a lacunary series, and hence the unit circle is a natural boundary for $F$. Therefore $|z|=p^{-1}$ is a natural boundary for $R_\phi(z)$.

\section{ P\'olya-Carlson dichotomy for the Reidemeister  zeta function of  endomorphisms of the groups ~ $\mbz_p^d$ }

In this section we prove
a P{\'o}lya--Carlson dichotomy between rationality and a
natural boundary for the analytic behaviour of  the Reidemeister  zeta function  for     endomorphisms of groups ~$\mbz_p^d,~ d\geq 1$, where  ~$\mbz_p$,~ p-prime,  denotes additive group of p-adic integers. The group ~$\mbz_p$ is the most basic infinite pro-p group, it is totally disconnected, compact, abelian, torsion-free group.The field
of $p$-adic numbers is denoted by $\mathbb{Q}_p$
and the $p$-adic absolute value (as well
as its unique extension to the algebraic closure
$\overline{\mathbb{Q}}_p$) by $\lvert \cdot \rvert_p$.

We need the following statement

\begin{lem}(cf. \cite{BMW})\label{generating}
Let~$Z(z)=\sum_{n= 1}^{\infty}R(\phi^n)z^n$.
If~$R_{\phi}(z)$ is rational then~$Z(z)$ is rational.
If~$R_{\phi}(z)$ has an analytic continuation beyond
its circle of convergence, then so does~$Z(z)$ too.
In particular, the existence of a natural boundary
at the circle of convergence for~$Z(z)$ implies the
existence of a natural boundary for~$R_{\phi}(z)$.
\end{lem}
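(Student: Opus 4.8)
The plan is to deduce both implications from the logarithmic--derivative identity linking $Z$ and $R_\phi$. Set $L(z)=\sum_{n\ge 1}\tfrac{R(\phi^n)}{n}z^n$, so that $R_\phi(z)=\exp\bigl(L(z)\bigr)$ on a disc about the origin on which all three of $R_\phi$, $L$ and $Z$ converge (the root test gives them the same radius of convergence, since $\limsup_n R(\phi^n)^{1/n}=\limsup_n (R(\phi^n)/n)^{1/n}$). Differentiating the relation $\log R_\phi=L$ and multiplying by $z$ produces the key identity
$$ Z(z)=z\,L'(z)=z\,\frac{R_\phi'(z)}{R_\phi(z)}, $$
valid on that disc; I would record this first and then argue everything formally. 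In particular, if $R_\phi=P/Q$ is a rational function with $P,Q\in\mathbb{C}[z]$, then $R_\phi'/R_\phi=P'/P-Q'/Q$ is rational, hence so is $Z(z)=z\,R_\phi'(z)/R_\phi(z)$.

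For the continuation statement, suppose $R_\phi$ extends holomorphically to an open set containing its disc of convergence together with a neighbourhood of some point $z_0$ of the bounding circle. Then $R_\phi'$ extends to the same set, and the extended $R_\phi$ --- holomorphic on a connected open set and equal to $1$ at the origin --- has only isolated zeros there. Hence $z\,R_\phi'(z)/R_\phi(z)$ extends over that set as a meromorphic function agreeing with $Z$ near the origin, so it furnishes an analytic continuation of $Z$ across every point of the bounding circle in that set other than the isolated zeros of the extended $R_\phi$. Consequently $Z$ has no natural boundary at its circle of convergence, and reading this contrapositively gives the last assertion: a natural boundary for $Z$ forces one for $R_\phi$.

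The only delicate point is that the continued $R_\phi$ may vanish on the bounding circle, so that $Z=z\,R_\phi'/R_\phi$ picks up poles there rather than staying holomorphic; the observation to make is that such zeros are isolated and therefore cannot block continuation at the remaining boundary points, hence they do not affect the natural--boundary dichotomy. Apart from this, one needs only routine bookkeeping: that $R_\phi$, $L$ and $Z$ share a radius of convergence --- so that the identity above is non-vacuous and the two circles of convergence in the statement coincide (a quick consequence of the root test together with Pringsheim's theorem, all the series having non-negative coefficients) --- and the standard facts that holomorphic extension commutes with differentiation and with forming quotients away from zeros.
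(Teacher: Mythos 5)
Your proof is correct and rests on exactly the same observation the paper uses, namely the logarithmic-derivative identity $Z(z)=z\,R_{\phi}'(z)/R_{\phi}(z)$; the paper's own proof consists of stating this identity and nothing more. You have merely supplied the routine bookkeeping (rational functions are closed under logarithmic differentiation, zeros of the continued $R_{\phi}$ are isolated and so do not block continuation, and the shared radius of convergence) that the paper leaves implicit.
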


\begin{proof}
This follows from the fact that~$Z(z)= z\cdot R_{\phi}(z)^{'}/R_{\phi}(z)$.
\end{proof}

One of the important links between the arithmetic
properties of the coefficients of a complex power
series and its analytic behaviour is given
by the P{\'o}lya--Carlson theorem  \cite{Segal}.

\medskip{\noindent \bf P\'{o}lya--Carlson Theorem.}
{\it A power series with integer coefficients
and radius of convergence~$1$ is either rational or has the
unit circle as a natural boundary.}\medskip

	\begin{lem}\label{p}
		$\End(\mbz_p)=\mbz_p$ for abelian group $\mbz_p$. 
	\end{lem}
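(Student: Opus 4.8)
The plan is to exhibit the obvious ring homomorphism $\mbz_p\to\End(\mbz_p)$ sending $a$ to the multiplication map $\mu_a\colon x\mapsto ax$, and to show it is bijective. Injectivity is immediate since $\mu_a(1)=a$, so two distinct scalars give distinct endomorphisms. The entire content is surjectivity: given a group endomorphism $\phi$ of the additive group $\mbz_p$, one must produce $a\in\mbz_p$ with $\phi=\mu_a$; the natural candidate is $a=\phi(1)$, and the point is that $\phi$ is already determined by this single value, even though $\phi$ is only assumed to be additive and not a priori continuous.

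First I would record that, for every $n\ge 1$, the subgroup $p^n\mbz_p$ is fully invariant in the abstract abelian group $\mbz_p$. Indeed $p^n\mbz_p=\{p^ny:y\in\mbz_p\}$ (iterated addition and ring multiplication by $p^n\in\mbz\subset\mbz_p$ coincide), i.e.\ it is exactly the set of elements divisible by $p^n$ in the group, and $\phi(p^ny)=p^n\phi(y)\in p^n\mbz_p$ for any additive $\phi$. Since $\{p^n\mbz_p\}_{n\ge1}$ is a neighbourhood basis of $0$ in the $p$-adic topology and $\phi(p^n\mbz_p)\subseteq p^n\mbz_p$, the map $\phi$ is continuous at $0$, and a homomorphism of topological groups that is continuous at the identity is continuous everywhere. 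Hence every $\phi\in\End(\mbz_p)$ is continuous.

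Next, restricting $\phi$ to the subgroup $\mbz\subset\mbz_p$ gives $\phi(m)=m\,\phi(1)=\mu_{\phi(1)}(m)$ for all $m\in\mbz$. Thus $\phi$ and $\mu_{\phi(1)}$ are two continuous maps $\mbz_p\to\mbz_p$ agreeing on the dense subset $\mbz$, so they agree on all of $\mbz_p$; that is, $\phi=\mu_{\phi(1)}$. This proves surjectivity, and therefore $\End(\mbz_p)\cong\mbz_p$ as rings, in particular as abelian groups.

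The only subtle point — and the step I would flag as the crux — is precisely this automatic continuity of an abstract additive endomorphism; everything else is formal. A purely algebraic alternative avoiding topology is to use $\mbz_p=\varprojlim_n\mbz/p^n\mbz$: any homomorphism $\mbz_p\to\mbz/p^n\mbz$ annihilates $p^n\mbz_p$ and hence factors through $\mbz_p/p^n\mbz_p\cong\mbz/p^n\mbz$, so $\Hom(\mbz_p,\mbz/p^n\mbz)\cong\mbz/p^n\mbz$; since $\Hom(\mbz_p,-)$ commutes with inverse limits, one gets $\End(\mbz_p)=\Hom(\mbz_p,\varprojlim_n\mbz/p^n\mbz)\cong\varprojlim_n\mbz/p^n\mbz=\mbz_p$.
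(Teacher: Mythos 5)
Your argument is correct and is essentially the paper's own proof: both establish $\phi(p^n\mbz_p)\subseteq p^n\mbz_p$ to deduce continuity of an abstract endomorphism, then use density of $\mbz$ in $\mbz_p$ to conclude $\phi=\mu_{\phi(1)}$. Your closing purely algebraic variant via $\Hom(\mbz_p,\varprojlim_n \mbz/p^n\mbz)$ is a valid bonus but not needed.
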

	\begin{proof} Let $\phi\in\End(\mbz_p)$. We have $p^n\phi(x)=\phi(p^nx)$. Then $\phi(p^n\mbz_p)\subset p^n\mbz_p$, so $\phi$ is continuous. For every $x\in\mbz_p$ there exists a sequence of integers $x_n$ converging to $x$. Then $$\phi(x)=\lim \phi(x_n)=\lim x_n\phi(1)=\phi(1)x,$$ so $\phi$ is a multiplication by $\phi(1)$. 
	\end{proof}
	%\begin{Wni}
	%	$\End(\mbz_p^d)=M_d(\mbz_p)$.
	%\end{Wni}
	
	Let $\phi\in\End(\mbz_p)$, then $\phi(x)=ax$, where $a\in\mbz_p$. We have $\phi^n(x)=a^nx$. By definition, $$y\sim_{\phi}x\Leftrightarrow\exists b\in\mbz_p: y=b+x-ab=x+b(1-a)\Leftrightarrow y\equiv x(\mathrm{mod}(1-a)).$$
	
	This implies that $R(\phi)=|\mbz_p/(1-a)\mbz_p|$. But $$(1-a)\zp=p^{v_p(1-a)}\zp=|1-a|_p^{-1}\mbz_p,$$ so we can write $R(\phi)=|1-a|_p^{-1}=|a-1|_p^{-1}$ and, more generally,
	
	 $R(\phi^n)=|1-a^n|_p^{-1}=|a^n -1|_p^{-1},$ for all ~ $n \in \mathbb{N}.$

	Now consider a group $\mbz_p^d, ~d\geq 2$. It follows easily from Lemma \ref{p}, that $\End(\zp^d)=\mathrm{M}_d(\zp)$. For any matrix $A\in M_d(\zp)$ there exists a diagonal matrix $D\in M_d(\zp)$ and unimodular matrices  $E,F\in M_d(\zp)$ such that $D=EAF$. 
	
	%%%%%%%%%%%%%%%%%%%%%%%%%
	\begin{lem}\label{coker}
	For endomorphism $\phi_p:\zp^d\rightarrow \zp^d$ we have
	$$R(\phi_p)=\#\mathrm{Coker}(1-\phi_p)=|\det(\Phi_p - \mathrm{Id})|_p^{-1},$$ where $\Phi_p$ is a matrix of $\phi_p$.
	\end{lem}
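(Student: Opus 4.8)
The plan is to reduce the statement to a Smith normal form computation over the discrete valuation ring $\zp$. First I would unwind the definition of twisted conjugacy in the abelian group $\zp^d$: writing $\Phi_p$ for the matrix of $\phi_p$, two vectors $x,y\in\zp^d$ are $\phi_p$-conjugate iff $y=g+x-\Phi_p g=x+(\mathrm{Id}-\Phi_p)g$ for some $g\in\zp^d$. Hence the $\phi_p$-conjugacy classes are exactly the cosets of the subgroup $\mathrm{Im}(\mathrm{Id}-\Phi_p)\le\zp^d$, so
$$R(\phi_p)=[\zp^d:\mathrm{Im}(\mathrm{Id}-\Phi_p)]=\#\mathrm{Coker}(\mathrm{Id}-\Phi_p),$$
which is the first claimed equality; this is just the abelian-group fact recalled at the start of Section~1 (that $R(\phi)$ equals the cardinality of $\coker(\phi-\mathrm{Id})$) applied to $G=\zp^d$.

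For the second equality I would invoke the existence, already noted in the text, of unimodular matrices $E,F\in M_d(\zp)$ and a diagonal matrix $D=\mathrm{diag}(d_1,\dots,d_d)\in M_d(\zp)$ with $D=E(\mathrm{Id}-\Phi_p)F$. Since $E$ and $F$ are automorphisms of $\zp^d$, they induce an isomorphism $\mathrm{Coker}(\mathrm{Id}-\Phi_p)\cong\mathrm{Coker}(D)\cong\bigoplus_{i=1}^{d}\zp/d_i\zp$. Writing $d_i=u_ip^{k_i}$ with $u_i\in\zp^{\times}$ and $k_i\in\{0,1,2,\dots,\infty\}$ (where $k_i=\infty$ means $d_i=0$), each summand $\zp/d_i\zp=\zp/p^{k_i}\zp$ has cardinality $p^{k_i}=|d_i|_p^{-1}$, so
$$\#\mathrm{Coker}(\mathrm{Id}-\Phi_p)=\prod_{i=1}^{d}|d_i|_p^{-1}=\Bigl|\prod_{i=1}^{d}d_i\Bigr|_p^{-1}=|\det D|_p^{-1}.$$
Finally $\det D=\det E\cdot\det(\mathrm{Id}-\Phi_p)\cdot\det F$, and $|\det E|_p=|\det F|_p=1$ because $E,F$ are unimodular over $\zp$, so $|\det D|_p=|\det(\mathrm{Id}-\Phi_p)|_p=|\det(\Phi_p-\mathrm{Id})|_p$, which gives the asserted formula.

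The argument is a routine application of the structure theorem for finitely generated modules over the principal ideal domain $\zp$, so I do not expect a genuine obstacle; the only point needing care is the degenerate case $\det(\Phi_p-\mathrm{Id})=0$, in which some $d_i=0$, the cokernel is infinite, and the identity holds in the convention $|0|_p^{-1}=\infty$. When $\phi_p$ is tame this case does not occur and all quantities in sight are finite.
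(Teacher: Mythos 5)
Your proposal is correct and follows essentially the same route as the paper: both reduce to the Smith normal form $D=E(\mathrm{Id}-\Phi_p)F$ over $\zp$ and compute $\#\mathrm{Coker}(\mathrm{Id}-\Phi_p)=|\zp^d:D\zp^d|=\prod_i|d_i|_p^{-1}=|\det(\Phi_p-\mathrm{Id})|_p^{-1}$ using the unimodularity of $E$ and $F$. Your additional remarks spelling out the first equality via cosets of $\mathrm{Im}(\mathrm{Id}-\Phi_p)$ and flagging the degenerate case $\det(\Phi_p-\mathrm{Id})=0$ are harmless refinements of the same argument.
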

	
	\begin{proof}
		 Let matrices $D,E,F\in M_d(\zp)$ be such that $D=E(\mathrm{Id}-\Phi_p)F$, where $D=(a_i)$ is diagonal matrix, $a_i\in\zp, 1\leq i \leq d$, and  matrices $E,F$ are unimodular. Then we have

$$
R(\phi_p)=\#\mathrm{Coker}(1-\phi_p)=|\zp^d:(\mathrm{Id}-\Phi_p)\zp^d|=|\mbz_p^{d}:D\mbz_p^d|=\\
$$
$$
=| \zp:a_1\zp|\cdot|\zp:a_2\zp|\cdot...\cdot|\zp:a_d\zp|=
|a_1|_p^{-1}\cdot...\cdot|a_d|_p^{-1}=\\
$$
$$
=|\det(D)|_p^{-1}=|\det(\mathrm{Id}-\Phi_p)|_p^{-1}=|\det(\Phi_p - \mathrm{Id})|_p^{-1}. 
$$

\end{proof}

In order to handle the sequence $R(\phi^n)=|a^n -1|_p^{-1}, ~ n \in \mathbb{N}$ more easily, we need a way to evaluate expressions of the form $|a^n -1|_p$ when $|a|_p=1$. The following technical lemma is useful.

\begin{lem} (\cite [Lemma 4.9]{Mi07})\label{key}
Let $K_ v $ be a non-archimedean local field and suppose $ x \in
K_v $ has $|x|_v = 1$ and infinite multiplicative order.
 Let $p > 0$ be the characteristic of the residue field $F_ v$ and $\gamma \in \mathbb{N}$ the multiplicative order
of the image of $x$ in $F_ v$ . Then $|x^n - 1|_v = 1$ whenever 
$(\gamma, n) = 1$ and
$\gamma \neq 1$. Furthermore, there are constants $ 0 < C < 1 $ and $r_ 0\geq 0$ such that whenever $n = k\gamma p^r$ with $(p, k) = 1$ and $r > r_ 0 $, then
$
|x^n- 1|_v = C |p|_v^r  \,\,\,  \textrm{ if }  \,\, char(K_ v )= 0 .
$

\end{lem}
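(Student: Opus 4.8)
The plan is to treat the two assertions separately, in the style of standard local‑field arguments as in \cite{Mi07}. For the first claim I would argue in the residue field: if $(\gamma,n)=1$ and the image $\bar x\in F_v^{\times}$ has order $\gamma$, then $\overline{x^n}=\bar x^{\,n}$ generates the same cyclic subgroup of $F_v^{\times}$, hence also has order $\gamma$. Since $\gamma\neq 1$ this forces $\overline{x^n}\neq 1$, so $x^n-1$ reduces to a nonzero element of $F_v$, i.e.\ it is a unit of the valuation ring of $K_v$, and therefore $|x^n-1|_v=1$. (Here the infinite order of $x$ is not used, only $\gamma\neq1$.)

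For the quantitative statement I would first peel off the $\gamma$‑part. Because $\bar x^\gamma=1$, write $x^\gamma=1+w$ with $|w|_v<1$; since $x$ has infinite multiplicative order, $w\neq0$, so $C_0:=|w|_v=|x^\gamma-1|_v$ lies in $(0,1)$. For any $k$ coprime to $p$ one has $x^{k\gamma}-1=(1+w)^k-1=kw+\sum_{j\geq 2}\binom{k}{j}w^j$, and as $|k|_v=1$ while $|w|_v^{\,j}\leq|w|_v^2<|w|_v$ for $j\geq2$, the ultrametric inequality identifies $kw$ as the strictly dominant term, giving $|x^{k\gamma}-1|_v=C_0$, independent of $k$. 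Applying the identical reasoning with $1+w$ replaced by $(x^\gamma)^{p^r}=1+w_r$ (and $k$ unchanged), the quantity $|x^{k\gamma p^r}-1|_v=|(1+w_r)^k-1|_v$ again equals $|w_r|_v=|(x^\gamma)^{p^r}-1|_v$, so it too is independent of $k$; it therefore suffices to analyse the sequence $T_r:=|(x^\gamma)^{p^r}-1|_v$, with $T_0=C_0$.

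The core is the recursion for $T_r$. Writing $(x^\gamma)^{p^r}=1+u_r$ with $|u_r|_v=T_r<1$ and expanding $(1+u_r)^p-1=p\,u_r+\sum_{2\leq j\leq p-1}\binom{p}{j}u_r^{\,j}+u_r^{\,p}$, I would use $v_p\binom{p}{j}=1$ for $1\leq j\leq p-1$ to see that the only two terms capable of dominating are $p\,u_r$, of absolute value $|p|_v T_r$, and $u_r^{\,p}$, of absolute value $T_r^{\,p}$, all intermediate terms having strictly smaller value $|p|_v T_r^{\,j}$. Comparing $|p|_v T_r$ with $T_r^{\,p}$: if $T_r<|p|_v^{1/(p-1)}$ then $T_{r+1}=|p|_v T_r$; if $T_r>|p|_v^{1/(p-1)}$ then $T_{r+1}=T_r^{\,p}<T_r$; and if $T_r=|p|_v^{1/(p-1)}$ then the two leading valuations coincide and, whatever the cancellation, $T_{r+1}\leq|p|_v^{\,p/(p-1)}<|p|_v^{1/(p-1)}$. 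Hence $T_r$ is strictly decreasing as long as it stays $\geq|p|_v^{1/(p-1)}$, and since the value group of $K_v$ is discrete there is a least index $r_0$ with $T_{r_0}<|p|_v^{1/(p-1)}$; from that point on $T_{r+1}=|p|_v T_r$, so $T_r=|p|_v^{\,r-r_0}T_{r_0}$ for all $r\geq r_0$. Setting $C:=|p|_v^{-r_0}T_{r_0}$ gives $|x^n-1|_v=|x^{k\gamma p^r}-1|_v=C\,|p|_v^{\,r}$ for every $r>r_0$, with $C>0$ and $r_0$ depending only on $x$ and $v$. The bound $C<1$ I would then obtain by inspecting the first $r_0$ steps of the recursion; for $K_v=\mathbb{Q}_p$, the case needed in this paper, this is immediate, since $r_0=0$ (so $C=C_0<1$) unless $p=2$, in which case $r_0\leq1$ and one checks $C=|p|_v^{-1}\,|x^{\gamma p}-1|_p<1$ directly.

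The step I expect to require the most care is the borderline case $T_r=|p|_v^{1/(p-1)}$, where $p\,u_r$ and $u_r^{\,p}$ have equal valuation: one must confirm that, irrespective of the cancellation that then occurs, $T_{r+1}$ drops strictly below the threshold so that the clean geometric recursion takes over permanently. The other point deserving attention is the verification that $T_r$ — and hence the constant $C$ — is genuinely independent of the coprime cofactor $k$, which is precisely what makes $C$ a constant rather than a quantity depending on $n$.
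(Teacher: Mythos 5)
The paper itself gives no proof of this lemma; it is imported directly from Miles~\cite[Lemma~4.9]{Mi07}. So there is no in-paper argument to compare against, and what follows is simply an assessment of your reconstruction, which does follow the standard ``lifting-the-exponent'' approach one would expect the original to take.

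Your argument for the first assertion is correct: $(\gamma,n)=1$ forces $\bar x^{\,n}$ to have order $\gamma\neq 1$ in $F_v^\times$, so $x^n-1$ is a unit. Your reduction of the second assertion to the sequence $T_r=\lvert x^{\gamma p^r}-1\rvert_v$ via the ultrametric binomial estimate (using $\lvert k\rvert_v=1$) is also correct, as is the recursion: comparing $\lvert p\rvert_vT_r$ and $T_r^{\,p}$ gives the three regimes you describe, the borderline case $T_r=\lvert p\rvert_v^{1/(p-1)}$ pushes $T_{r+1}$ strictly below the threshold, discreteness of the value group yields a least $r_0$, and thereafter $T_r=\lvert p\rvert_v^{\,r-r_0}T_{r_0}$. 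The independence of the constant from $k$ and the strict positivity $C>0$ (from infinite order) are handled correctly.

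The one place your proof stops short is the claim $C<1$, which you verify only for $K_v=\mathbb{Q}_p$; but note that in this paper the lemma is applied to eigenvalues $\lambda_i\in\overline{\mathbb{Q}}_p$, so $K_v=\mathbb{Q}_p(\lambda_i)$ may well be a ramified extension. In that generality the constant your construction produces need not lie in $(0,1)$. Concretely, take $K_v=\mathbb{Q}_2(2^{1/4})$ with the normalised absolute value (so $\lvert 2^{1/4}\rvert_v=2^{-1}$, $\lvert 2\rvert_v=2^{-4}$), $x=1+2^{1/4}$, $\gamma=1$. Then $T_0=2^{-1}$, $T_1=2^{-2}$, $T_2=2^{-4}=\lvert p\rvert_v^{1/(p-1)}$, $T_3=\lvert x^{8}-1\rvert_v=2^{-10}$; hence $r_0=3$ and $C=T_3/\lvert 2\rvert_v^{\,3}=4>1$. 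So either the statement as quoted is carrying an unstated hypothesis from Miles (e.g.\ unramifiedness) or ``$0<C<1$'' should simply read ``$C>0$.'' This does not affect anything downstream: the paper only uses that $\lvert x^n-1\rvert_v$ eventually depends on $n=k\gamma p^r$ through $r$ alone, which your recursion fully establishes. I would just replace the appeal to ``inspecting the first $r_0$ steps'' by noting that $C$ is whatever constant the recursion produces, and not assert $C<1$ beyond the cases where you actually check it.
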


Now  we prove
a P{\'o}lya--Carlson dichotomy between rationality and a
natural boundary for the analytic behaviour of  the Reidemeister  zeta function  for     endomorphisms of groups ~$\mbz_p^d,~ d\geq 1$.

\begin{teo}
	Let   $\phi_p:\zp^d\rightarrow \zp^d$ be a tame endomorphism 
	and $ \lambda_1,..,\lambda_d \in \overline{\mathbb{Q}}_p $ be the eigenvalues of $\Phi_p$, including multiplicities.
	Then the Reidemeister zeta function $R_{\phi_p}(z)$ is either a
	rational function or it has the unit circle as a natural boundary.
	Furthermore, the
  latter occurs if and only if
  $\lvert \lambda_i \rvert_p= 1$ for some
  $i \in \{1,\ldots,d\}$.
	\end{teo}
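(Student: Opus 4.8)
The plan is to reduce the statement to a product formula for $R(\phi_p^n)$ over the eigenvalues and then apply the P\'olya--Carlson machinery exactly as in the proof of Theorem \ref{dichotomy}. First I would record that by Lemma \ref{coker} we have
$$R(\phi_p^n) = \lvert \det(\Phi_p^n - \mathrm{Id})\rvert_p^{-1} = \prod_{i=1}^d \lvert \lambda_i^n - 1\rvert_p^{-1},$$
where the $\lambda_i$ are the eigenvalues of $\Phi_p$ in $\overline{\mathbb Q}_p$ and $\lvert\cdot\rvert_p$ is the unique extension of the $p$-adic absolute value; tameness guarantees each factor is finite, i.e. no $\lambda_i$ is a root of unity (no $\lambda_i^n=1$). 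Then I would split the eigenvalues into three groups according to $\lvert\lambda_i\rvert_p$: those with $\lvert\lambda_i\rvert_p>1$, those with $\lvert\lambda_i\rvert_p<1$, and those with $\lvert\lambda_i\rvert_p=1$. For $\lvert\lambda_i\rvert_p>1$ one has $\lvert\lambda_i^n-1\rvert_p = \lvert\lambda_i\rvert_p^n$, contributing a clean geometric factor $\lvert\lambda_i\rvert_p^{-n}$ to $R(\phi_p^n)$; for $\lvert\lambda_i\rvert_p<1$ one has $\lvert\lambda_i^n-1\rvert_p=1$, contributing nothing. So the interesting behaviour is entirely governed by the eigenvalues on the unit circle.

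Next I would treat the case where no eigenvalue has $\lvert\lambda_i\rvert_p=1$. Then $R(\phi_p^n) = \prod_{\lvert\lambda_i\rvert_p>1}\lvert\lambda_i\rvert_p^{-n} = \mu^{-n}$ for the constant $\mu = \prod_{\lvert\lambda_i\rvert_p>1}\lvert\lambda_i\rvert_p$ (a rational power of $p$), whence
$$R_{\phi_p}(z) = \exp\Bigl(\sum_{n=1}^\infty \frac{\mu^{-n}}{n} z^n\Bigr) = (1-\mu^{-1}z)^{-1},$$
which is rational. Conversely, if some $\lvert\lambda_i\rvert_p=1$, I would invoke Lemma \ref{key}: since $\lambda_i$ is not a root of unity it has infinite multiplicative order, so with $\gamma$ the multiplicative order of its residue image and $C, r_0$ the resulting constants, along the subsequence $n_k = \gamma p^k$ (eventually $r=k>r_0$) one gets $\lvert\lambda_i^{n_k}-1\rvert_p = C\lvert p\rvert_p^{k} = C p^{-k}$, which decays. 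Writing $R(\phi_p^n) = \mu^{-n} b_n$ where $b_n = \prod_{\lvert\lambda_i\rvert_p=1}\lvert\lambda_i^n-1\rvert_p^{-1} \ge 1$, I would form $F(z) = \sum_{n=1}^\infty b_n z^n$ and note $F(\mu^{-1}z) = zR_{\phi_p}'(z)/R_{\phi_p}(z)$ by the same logarithmic-derivative computation as in Lemma \ref{generating}. The radius of convergence of $F$ is $1$ (the $b_n$ are bounded below by $1$, and for $n=n_k$ the factor from $\lambda_i$ forces $b_{n_k}^{1/n_k}$ to stay bounded away from values $>1$ along that subsequence; more precisely one gets $\limsup_{k} b_{n_k}^{1/n_k} \le p^{1/\gamma}<\infty$ while $\limsup_n b_n^{1/n}\ge 1$, and matching radii forces it to equal $1$), so $F$ has a subsequence of partial sums converging uniformly on a disc of radius $p^{1/\gamma}>1$; $F$ is therefore overconvergent, hence by the Ostrowski--Hadamard gap argument (as cited, \cite{Segal} sec.~6.2) the unit circle is a natural boundary for $F$, and so for $R_{\phi_p}(z)$ via Lemma \ref{generating}.

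The main obstacle I anticipate is the bookkeeping when several eigenvalues lie on the unit circle, possibly with different residue orders $\gamma$ and possibly conjugate over $\mathbb Q_p$: one must check that the contributions cannot conspire to cancel the decay, and in particular that $F$ genuinely has radius of convergence exactly $1$ rather than something larger (which would make $F$ --- and hence $R_{\phi_p}$ --- entire on a bigger disc and break the argument). This is handled by noting each unit-circle factor $\lvert\lambda_i^n-1\rvert_p^{-1}\ge 1$ for every $n$, so $b_n\ge 1$ always, giving $\limsup b_n^{1/n}\ge 1$; combined with the fact that $R(\phi_p^n)\le \mu^{-n}\cdot(\text{subexponential})$ fails only if some $b_n$ grows exponentially, which Lemma \ref{key} rules out since each $\lvert\lambda_i^n-1\rvert_p$ is bounded below. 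A secondary subtlety is ensuring $\mu^{-1}$ is the precise radius of convergence of $R_{\phi_p}$ in the rational case and that the hypothesis of tameness is exactly what excludes roots of unity among the $\lambda_i$; both are short verifications. Once the radius-one claim for $F$ is pinned down, the dichotomy follows verbatim from the overconvergence/lacunary-series argument already used above.
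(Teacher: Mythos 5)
Your reduction to $R(\phi_p^{\,n})=\prod_{i=1}^{d}\lvert\lambda_i^{\,n}-1\rvert_p^{-1}$ via Lemma \ref{coker} and your treatment of the rational case agree with the paper; note only that $\Phi_p\in M_d(\zp)$ forces $\lvert\lambda_i\rvert_p\le 1$ for every $i$ (the characteristic polynomial has coefficients in $\zp$), so your case $\lvert\lambda_i\rvert_p>1$ is empty and your $\mu$ equals $1$. The genuine gap is in the natural-boundary step: with $\mu=1$ your series $F(z)=\sum_{n\ge1}b_nz^n$ is exactly $Z_p(z)=\sum_{n\ge1}R(\phi_p^{\,n})z^n$, whose coefficients are positive integers, all $\ge 1$. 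Such a series is never overconvergent: for any real $r>1$ and any indices $m<m'$, the difference of partial sums satisfies $\lvert s_{m'}(r)-s_m(r)\rvert\ge r^{m+1}\to\infty$, so no subsequence of partial sums converges outside the closed unit disc. Indeed, Lemma \ref{key} along $n_k=\gamma p^k$ makes $b_{n_k}$ \emph{large} (of order $p^k$), not small. The overconvergence mechanism of Theorem \ref{dichotomy} works there only because the coefficients $a_j=\prod_{v\in S_i}\lvert t^j-1\rvert_v$ are $\le 1$ and exponentially small along a subsequence \emph{after} the exponential main term $e^{h(\phi)j}$ has been divided out; in the $\zp^d$ setting there is no exponential main term, so that route is unavailable.

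The paper's proof uses a different mechanism, which is the one you need: since the coefficients $R(\phi_p^{\,n})$ are integers and the radius of convergence is $1$, the P\'olya--Carlson theorem already yields the dichotomy, and it remains only to rule out rationality when some $\lvert\lambda_i\rvert_p=1$. By Lemma \ref{generating}, rationality of $R_{\phi_p}(z)$ would make $Z_p(z)$ rational, hence $R(\phi_p^{\,n})$ would satisfy a linear recurrence; but Lemma \ref{key} gives $R(\phi_p^{\,kn(e)})=R(\phi_p^{\,n(e)})$ for all $k$ coprime to $n(e)=q^e\gamma p^{r}$, so the sequence assumes infinitely many values infinitely often, contradicting Myerson--van der Poorten. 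Separately, your justification that the radius of convergence of $F$ equals $1$ is also incomplete: an estimate of $\limsup_k b_{n_k}^{1/n_k}$ along a single subsequence controls neither $\limsup_n b_n^{1/n}$ nor even gives the value $1$. The paper pins this down with the $p$-adic logarithm bound $\tfrac1n\ll\lvert\lambda_i^{\,n}-1\rvert_p\le 1$, giving $1\le R(\phi_p^{\,n})\ll n^d$ and hence radius exactly $1$ by Cauchy--Hadamard.
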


\begin{proof}
Firstly, we consider separately the case of the group $\zp$ as it illustrates some of ideas needed for the proof of the dichotomy in general case when
$ d\geq 1$.  Lemma \ref{p} yields  ~$\phi_p(x)=ax$, where $a\in\mbz_p$.
Hence $ |a|_p \le 1.$ Then the Reidemeister numbers
$R(\phi_p^n)=|a^n -1|_p^{-1},$ for all  ~ $n \in \mathbb{N}.$ 
 If  $ |a|_p < 1$, then ~$R(\phi_p^n)=|a^n -1|_p^{-1} =1,$ for all ~ $n \in \mathbb{N}$. Hence the radius of convergence of $R_{\phi_p}(z)$ equals 1  and the Reidemeister zeta function $ R_{\phi_p}(z)=\frac{1}{1 - z}$ is
a rational function. 

From now on, we shall write $ a(n) < < b(n)$ if there is a constant $c$ independent of $n$  for which $ a(n) < c\cdot b(n)$.
When $ |a|_p = 1$, we show that the radius of convergence of $R_{\phi_p}(z)$ equals 1
by deriving the bound 
\begin{equation}\label{bound}
\frac{1}{n} < < |a^n -1|_p \le 1
\end{equation}

Upper bound  in (\ref{bound}) follows from the definition of the p-adic norm. We may suppose that $|a^n -1|_p < 1$. Let $F$ denote the smallest field which contains $\mathbb{Q}_p$ and is both algebraically closed and complete with respect to $|\cdot|_p  $ .
The p-adic logarithm $\log_p$ is defined as
$$
\log_p(1 + z) =\sum_{n=1}^\infty \frac{(-1)^{n+1}}{n}z^n, 
$$
and converges for all $z\in F$ such that $ |z|_p < 1.$
Setting $z= a^n -1$ we get
$$
\log_p(a^n) = (a^n -1) - \frac{(a^n -1)^2}{2} + \frac{(a^n -1)^3}{3} - . . .
$$
and so $|\log_p(a^n)|_p \le |a^n -1|_p$. We always have 
$$
\frac{1}{n} < < |n\log_p(a)|_p = |\log_p(a^n)|_p ,
$$
so this establishes (\ref{bound}).

The bound (\ref{bound}) implies  by Cauchy - Hadamard
formula that the radius of convergence of $R_{\phi_p}(z)$ equals 1.
Hence  it remains to show that the Reidemeister zeta function $R_{\phi_p}(z)$ is irrational if $ |a|_p = 1$.
 Then $R_{\phi_p}(z)$ has the unit circle as a natural boundary by the  Lemma \ref{generating}  and by the P{\'o}lya--Carlson Theorem.
For a contradiction, assume that Reidemeister zeta function $R_{\phi_p}(z)$  is rational.
Then Lemma \ref{generating} implies that the function
~$Z_p(z)=\sum_{n= 1}^{\infty}R(\phi_p^n)z^n$ is rational also.
Hence the sequence~$R(\phi_p^n)$ satisfies a linear recurrence relation.

Let~$q \neq p $ be a rational prime, and define
\[
n(e)=q^e \gamma p^{r},
\]
where integer constant~$r\geqslant 0$ 
and~$e\geqslant 1$. Applying
Lemma~\ref{key}, we see
that
\[
R(\phi_p^{kn(e)})= R(\phi_p^{n(e)})
\]
whenever~$k$ is coprime to~$n(e)$. Hence the sequence~~$R(\phi_p^n)$
assumes infinitely many values infinitely often, and so it
cannot satisfy a linear recurrence
by a result of Myerson and van der
Poorten~\cite[Prop.~2]{MyPoo}, giving a contradiction.

Now we consider the general case of a tame endomorphism
$\phi_p:\zp^d\rightarrow \zp^d,~ d\geq 1$.
According to the Lemma \ref{coker} we have
	$$R(\phi_p^n)=\#\mathrm{Coker}(1-\phi_p^n)=|\det(\Phi_p^n - \mathrm{Id})|_p^{-1} = \prod_{i=1}^{d}|\lambda_i^n-1|_{p}^{-1} ,$$
	 where $\Phi_p$ is a matrix of $\phi_p$
and $\lambda_1,\lambda_2,...\lambda_d \in \overline{\mathbb{Q}}_p $ are the eigenvalues of $\Phi_p$, including multiplicities.
 The polynomial
    $\prod_{i=1}^{d} (X - \lambda_i )$
     has coefficients in
    $\mathbb{Z}_p$; in particular,
    $\lvert \lambda_i \rvert_p \le 1$ for
    every $i \in \{1,\ldots,d\}$ (see \cite{FK}).
    If  $ |\lambda_i|_p < 1$, for every $i \in \{1,\ldots,d\}$    then ~$R(\phi_p^n)= \prod_{i=1}^{d}|\lambda_i^n-1|_{p}^{-1}  =1,$ for all ~ $n \in \mathbb{N}$. Hence the radius of convergence of $R_{\phi_p}(z)$ equals 1  and the Reidemeister zeta function $ R_{\phi_p}(z)=\frac{1}{1 - z}$ is
a rational function. 
If  $ |\lambda_i|_p = 1$, for some  $i \in \{1,\ldots,d\}$ ~ 
    then  the bound (\ref{bound}) implies the bound
    \begin{equation}\label{bound1}
\frac{1}{n^d} < < R(\phi_p^n)= \prod_{i=1}^{d}|\lambda_i^n-1|_{p}^{-1} \le 1 .
\end{equation}
Hence the radius of convergence of the Reidemeister zeta function  $R_{\phi_p}(z)$ equals 1 
by the Cauchy--Hadamard formula and the bound (\ref{bound1}).
Now for the proof of the theorem it remains to show that 
the Reidemeister zeta  function $R_{\phi_p}(z)$ is irrational if
$ |\lambda_i|_p = 1 $ for some $i \in \{1,\ldots,d\}.$
Then $R_{\phi_p}(z)$ has the unit circle as a natural boundary by the  Lemma \ref{generating}  and by the P{\'o}lya--Carlson Theorem.
For a contradiction, assume that Reidemeister zeta function $R_{\phi_p}(z)$  is rational.
Then Lemma \ref{generating} implies that the function
~$Z_p(z)=\sum_{n= 1}^{\infty}R(\phi_p^n)z^n$ is rational also.
Hence the sequence~$R(\phi_p^n)$ satifies a linear recurrence relation.
Let~$q \neq p $ be a rational prime, and define
$
n(e)=q^e \gamma p^{r},
$
where integer constant~$r\geqslant 0$ 
and~$e\geqslant 1$. Applying
Lemma~\ref{key}, we see
that
$
R(\phi_p^{kn(e)})= R(\phi_p^{n(e)})
$
whenever~$k$ is coprime to~$n(e)$. Hence the sequence~~$R(\phi_p^n)$
assumes infinitely many values infinitely often, and so it
cannot satisfy a linear recurrence
by a result of Myerson and van der
Poorten~\cite[Prop.~2]{MyPoo}, giving a contradiction.

\end{proof}

\section{ P\'olya-Carlson dichotomy for the Reidemeister zeta function of continuous map}

We assume  $X$ to be a path- connected
topological space admitting a universal cover 
$p:\tilde{X}\rightarrow X$ and $f:X\rightarrow X$ to be a continuous map.
Let  $\tilde{f}:\tilde{X}\rightarrow \tilde{X}$ a lifting of $f$, ie. $p\circ\tilde{f}=f\circ p$.
Two liftings $\tilde{f}$ and $\tilde{f}^\prime$ are called
{\sl conjugate} if there is a $\gamma\in\Gamma\cong\pi_1(X)$
such that $\tilde{f}^\prime = \gamma\circ\tilde{f}\circ\gamma^{-1}$.
The subset $p(Fix(\tilde{f}))\subset Fix(f)$ is called
{\sl the fixed point class of $f$ determined by the lifting class $[\tilde{f}]$}.
The number of lifting classes of $f$ (and hence the number
of fixed point classes, empty or not) is called the {\sl Reidemeister Number} of $f$,
denoted $R(f)$.
This is a positive integer or infinity.
$R(f)$  is homotopy invariant.

Let a
specific lifting $\tilde{f}:\tilde{X}\rightarrow\tilde{X}$ be chosen
as reference.
Let $\Gamma$ be the group of
covering translations of $\tilde{X}$ over $X$.
Then every lifting of $f$ can be written uniquely
as $\gamma\circ \tilde{f}$, with $\gamma\in\Gamma$.
So elements of $\Gamma$ serve as coordinates of
liftings with respect to the reference $\tilde{f}$.
Now for every $\gamma\in\Gamma$ the composition $\tilde{f}\circ\gamma$
is a lifting of $f$ so there is a unique $\gamma^\prime\in\Gamma$
such that $\gamma^\prime\circ\tilde{f}=\tilde{f}\circ\gamma$.
This correspondence $\gamma\rightarrow\gamma^\prime$ is determined by
 the reference $\tilde{f}$, and is obviously a homomorphism.

\begin{dfn}
The endomorphism $\tilde{f}_*:\Gamma\rightarrow\Gamma$ determined
by the lifting $\tilde{f}$ of $f$ is defined by
$$
  \tilde{f}_*(\gamma)\circ\tilde{f} = \tilde{f}\circ\gamma.
$$
\end{dfn}
 
It is well known that $\Gamma\cong\pi_1(X)$.
We shall identify $\pi=\pi_1(X,x_0)$ and $\Gamma$ in the following way.
Pick base points $x_0\in X$ and $\tilde{x}_0\in p^{-1}(x_0)\subset \tilde{X}$
once and for all.
Now points of $\tilde{X}$ are in 1-1 correspondence with homotopy classes of paths
in $X$ which start at $x_0$:
for $\tilde{x}\in\tilde{X}$ take any path in $\tilde{X}$ from $\tilde{x}_0$ to 
$\tilde{x}$ and project it onto $X$;
conversely for a path $c$ starting at $x_0$, lift it to a path in $\tilde{X}$
which starts at $\tilde{x}_0$, and then take its endpoint.
 In this way, we identify a point of $\tilde{X}$ with
a path class $<c>$ in $X$ starting from $x_0$. Under this identification,
$\tilde{x}_0=<e>$ is the unit element in $\pi_1(X,x_0)$.
The action of the loop class $\alpha = <a>\in\pi_1(X,x_0)$ on $\tilde{X}$
is then given by
$$
 \alpha = <a> : <c>\rightarrow \alpha . c = <a.c>.
$$
Now we have the following relationship between $\tilde{f}_*:\pi\rightarrow\pi$
and
$$
 f_*  :  \pi_1(X,x_0) \longrightarrow \pi_1(X,f(x_0)).
$$

\begin{lem}
Suppose $\tilde{f}(\tilde{x}_0) = <w>$.
Then the following diagram commutes:
$$
\begin{array}{ccc}
  \pi_1(X,x_0)  &  \stackrel{f_*}{\longrightarrow}  &  \pi_1(X,f(x_0))  \\
                &  \tilde{f}_* \searrow \;\; &  \downarrow w_*   \\
                 &                           &  \pi_1(X,x_0)
\end{array}
$$
\end{lem}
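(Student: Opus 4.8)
The plan is to verify directly that $\tilde f_* = w_*\circ f_*$, where $w_*\colon\pi_1(X,f(x_0))\to\pi_1(X,x_0)$ denotes the change-of-basepoint isomorphism $w_*(\beta)=\langle w\rangle\,\beta\,\langle w\rangle^{-1}$. First I would check that this typechecks: since $p\circ\tilde f=f\circ p$, the endpoint of the path $w$ is $p(\tilde f(\tilde x_0))=f(x_0)$, so $w$ runs from $x_0$ to $f(x_0)$ and hence $w_*$ really does map $\pi_1(X,f(x_0))$ into $\pi_1(X,x_0)$, matching the arrows in the diagram.

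Fix $\alpha=\langle a\rangle\in\pi_1(X,x_0)$. I would start from the defining relation $\tilde f_*(\alpha)\circ\tilde f=\tilde f\circ\alpha$ between liftings of $f$ and evaluate both sides at the chosen base point $\tilde x_0$, which gives the equality of points
$$\tilde f_*(\alpha)\bigl(\tilde f(\tilde x_0)\bigr)=\tilde f\bigl(\alpha(\tilde x_0)\bigr)$$
in $\tilde X$. Using $\tilde f(\tilde x_0)=\langle w\rangle$, $\alpha(\tilde x_0)=\langle a\cdot e\rangle=\langle a\rangle$, and writing $\tilde f_*(\alpha)=\langle g\rangle$, the action formula recalled above turns the left-hand side into $\langle g\rangle\cdot\langle w\rangle=\langle g\cdot w\rangle$.

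The key step is to identify the right-hand point $\tilde f(\langle a\rangle)$. I would lift $a$ to the path $\tilde a$ in $\tilde X$ starting at $\tilde x_0$; then $\tilde f\circ\tilde a$ is a path in $\tilde X$ starting at $\tilde f(\tilde x_0)=\langle w\rangle$, and $p\circ(\tilde f\circ\tilde a)=f\circ p\circ\tilde a=f\circ a$, so by uniqueness of path lifting $\tilde f\circ\tilde a$ is the lift of $f\circ a$ issuing from the point $\langle w\rangle$. Since $\langle w\rangle$ is by definition the endpoint of the lift of $w$ from $\tilde x_0$, concatenating that lift with $\tilde f\circ\tilde a$ exhibits $\tilde f\circ\tilde a$ as the tail of the lift of $w\cdot(f\circ a)$ from $\tilde x_0$, so its endpoint is the point represented by $w\cdot(f\circ a)$; that is, $\tilde f(\langle a\rangle)=\langle w\cdot(f\circ a)\rangle$.

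Comparing the two sides gives $\langle g\cdot w\rangle=\langle w\cdot(f\circ a)\rangle$ as path classes rel endpoints, hence $g\cdot w\simeq w\cdot(f\circ a)$ and therefore $\langle g\rangle=\langle w\rangle\,\langle f\circ a\rangle\,\langle w\rangle^{-1}=\langle w\rangle\,f_*(\alpha)\,\langle w\rangle^{-1}=w_*\bigl(f_*(\alpha)\bigr)$. As $\alpha$ was arbitrary, $\tilde f_*=w_*\circ f_*$, so the diagram commutes. I expect the only genuinely delicate point to be the bookkeeping in the key step: keeping track of which lifted path issues from which point of $\tilde X$, and using that $\tilde f$ sends a path class to the class obtained by post-composing one of its representative lifts with $\tilde f$. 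Everything else is a substitution into the definitions recalled in the excerpt.
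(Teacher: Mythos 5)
The paper states this lemma without proof (it is a standard identification from Nielsen fixed point theory, essentially from Jiang's book), so there is no "paper proof" to compare against. Your argument is a correct and complete direct verification: you check that $w$ indeed runs from $x_0$ to $f(x_0)$ so $w_*$ has the stated source and target; you evaluate the defining identity $\tilde f_*(\alpha)\circ\tilde f=\tilde f\circ\alpha$ at the base point $\tilde x_0$; you correctly compute the left side via the covering-translation action as $\langle g\cdot w\rangle$ with $\tilde f_*(\alpha)=\langle g\rangle$; and the key step, identifying $\tilde f(\langle a\rangle)=\langle w\cdot(f\circ a)\rangle$ by composing the lift of $w$ from $\tilde x_0$ with $\tilde f\circ\tilde a$ (which, by $p\circ\tilde f=f\circ p$ and uniqueness of lifts, is the lift of $f\circ a$ from $\langle w\rangle$), is handled carefully and correctly. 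Equating $\langle g\cdot w\rangle=\langle w\cdot(f\circ a)\rangle$ rel endpoints and cancelling $w$ yields $\tilde f_*(\alpha)=\langle w\rangle f_*(\alpha)\langle w\rangle^{-1}=w_*(f_*(\alpha))$, which is exactly the commutativity of the diagram. One small point worth keeping in mind for the reader: you implicitly fix the convention $w_*(\beta)=\langle w\rangle\beta\langle w\rangle^{-1}$ for the change-of-basepoint map, which is the one forced by the direction of the vertical arrow in the diagram; since you derive rather than assume the formula, this is fine, but it could be stated up front to avoid any ambiguity with the opposite convention $\beta\mapsto\langle w\rangle^{-1}\beta\langle w\rangle$.
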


\begin{lem}\cite{j}\label{Boju}
Lifting classes of $f$ are in 1-1 correspondence with
$\tilde{f}_*$-conjugacy classes in $\pi$,
the lifting class $[\gamma\circ\tilde{f}]$ corresponding
to the $\tilde{f}_*$-conjugacy class of $\gamma$.
We therefore have $R(f) = R(\tilde{f}_*)$.
\end{lem}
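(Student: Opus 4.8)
The plan is to make the stated correspondence explicit and then check it is a well-defined bijection, reducing everything to the commutation relation defining $\tilde{f}_*$. I would first record the two facts already established above that do the work: every lifting of $f$ can be written \emph{uniquely} as $\gamma\circ\tilde{f}$ with $\gamma\in\Gamma$, and $\tilde{f}_*(\gamma)\circ\tilde{f}=\tilde{f}\circ\gamma$ for all $\gamma\in\Gamma$, where $\tilde{f}_*$ is a homomorphism (so the relation also holds on inverses). Together these show that $\gamma\mapsto[\gamma\circ\tilde{f}]$ is a surjection from $\Gamma$ onto the set of lifting classes of $f$, since every lifting — hence every member of every lifting class — already has this shape.

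Next I would compute exactly when two such representatives lie in the same lifting class. By definition $[\gamma_1\circ\tilde{f}]=[\gamma_2\circ\tilde{f}]$ iff there is $\delta\in\Gamma$ with $\gamma_2\circ\tilde{f}=\delta\circ(\gamma_1\circ\tilde{f})\circ\delta^{-1}$. Applying the commutation relation in the form $\tilde{f}\circ\delta^{-1}=\tilde{f}_*(\delta^{-1})\circ\tilde{f}$ rewrites the right-hand side as $\bigl(\delta\,\gamma_1\,\tilde{f}_*(\delta^{-1})\bigr)\circ\tilde{f}$, a deck transformation composed with $\tilde{f}$; by the uniqueness of the coordinate $\gamma$ this equation is equivalent to $\gamma_2=\delta\,\gamma_1\,\tilde{f}_*(\delta^{-1})$, i.e. to $\gamma_1$ and $\gamma_2$ being $\tilde{f}_*$-conjugate in the sense of the Introduction. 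Hence $\gamma\mapsto[\gamma\circ\tilde{f}]$ descends to a bijection between $\tilde{f}_*$-conjugacy classes in $\Gamma$ and lifting classes of $f$; transporting along the identification $\Gamma\cong\pi$ fixed earlier gives the correspondence with $\tilde{f}_*$-conjugacy classes in $\pi$. Counting classes on both sides then yields $R(f)=R(\tilde{f}_*)$, with the convention that both are infinite simultaneously.

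There is no serious obstacle; the only point needing care is bookkeeping — keeping track of the left/right placement of the deck transformations, applying the commutation relation (and its consequence for $\delta^{-1}$) in the correct order, and invoking uniqueness of the coordinate $\gamma$ at precisely the step where one passes from an equality of maps $\tilde{X}\to\tilde{X}$ to an equality in $\Gamma$. Finally I would remark that, since the fixed point classes of $f$ are by definition the images $p(\Fix(\gamma\circ\tilde{f}))$ of the lifting classes, this bijection is exactly the indexing of the fixed point classes of $f$ (empty or not) by $\tilde{f}_*$-conjugacy classes in $\pi$, which is the form in which the count $R(f)=R(\tilde{f}_*)$ is used in the sequel.
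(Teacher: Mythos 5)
Your argument is correct, and it is the standard textbook proof (the one in the cited reference, Jiang's \emph{Nielsen Fixed Point Theory}); the paper itself states Lemma~\ref{Boju} as a quoted result from \cite{j} and gives no proof, so there is nothing internal to compare against. The key computation is exactly right: $\delta\circ(\gamma_1\circ\tilde f)\circ\delta^{-1}=\bigl(\delta\,\gamma_1\,\tilde f_*(\delta^{-1})\bigr)\circ\tilde f$, and invoking uniqueness of the deck-transformation coordinate turns the equality of liftings into $\gamma_2=\delta\,\gamma_1\,\tilde f_*(\delta^{-1})$, which is precisely the paper's definition of $\tilde f_*$-conjugacy with $g=\delta$. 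Your bookkeeping of the left/right placement and the use of $\tilde f_*$ being a homomorphism (to handle $\delta^{-1}$) are the only points that require care, and you handle them correctly.

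One optional refinement: you may want to note explicitly that surjectivity of $\gamma\mapsto[\gamma\circ\tilde f]$ and the conjugacy computation together show the map is well defined on $\tilde f_*$-conjugacy classes \emph{and} injective there, since the "iff" in your second paragraph gives both directions at once; as written this is implicit but a reader could momentarily wonder whether you have only established one implication. Also, the closing remark about fixed point classes $p(\Fix(\gamma\circ\tilde f))$ being indexed by these conjugacy classes is accurate and matches how the paper uses the lemma, though strictly speaking the lemma as stated only asserts the bijection with \emph{lifting} classes (the paper's surrounding text already identifies fixed point classes with lifting classes, empty or not).
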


Taking a dynamical point of view,
 we consider the iterates of $f$ assume that $R(f^n)<\infty$ for all $n>0$.
 The Reidemeister zeta function of $f$ 
  is defined \cite{Fel91}
 as power series:
 \begin{eqnarray*}
 R_f(z)
 & := &
 \exp\left(\sum_{n=1}^\infty \frac{R(f^n)}{n} z^n \right),
 \end{eqnarray*}
 
Using Lemma \ref{Boju} we may apply the theorems of 
previous section about  the Reidemeister zeta function for automorphisms
 to the Reidemeister zeta function of continuous maps.
 
\begin{teo}\label{Main3}
Suppose that the fundamental group $ \Gamma $ of the topological space $X$ is    a torsion abelian non-finitely generated group and $\tilde{f}_*: \Gamma \rightarrow \Gamma$ is an automorphism .
Suppose that the group $\Gamma$ as  $R = \mathbb{Z}[t]$- module  satisfies the following conditions:

(1) the set of associated primes $Ass(\Gamma)$ is finite and consists entirely of non-zero
principal ideals of   the polynomial  ring $R = \mathbb{Z}[t]$,

(2) the map $g \rightarrow (t^j -1)g$  is a monomorphism of $\Gamma$ for all $j\in \mathbb{N}$\\
(equivalently, $t^j - 1 \notin \mathfrak{p}$ for all $ \mathfrak{p}\in Ass(\Gamma)$ and all $j\in
\mathbb{N}$),

(3) for each $\mathfrak{p}\in Ass(\Gamma)$, $ m(\mathfrak{p})= \dim_{\mathbb{K}(\mathfrak{p})}G_{\mathfrak{p}} < \infty$.

Then there exist
function fields ~$\mathbb{K}_1,\dots,\mathbb{K}_n$ of the form $\mathbb{K}_i=\mathbb{F}_{p(i)}(t)$, where each $p(i)$ is a rational prime, sets
of finite places~$P_i\subset \mathcal{P}(\mathbb{K}_i)$,
  sets of infinite places
$P_i^\infty \subset\mathcal{P}(\mathbb{K}_i)$, sets
of finite places~
 $S_i=\mathcal{P}(\mathbb{K}_i)\setminus
({P_i^\infty\cup  P_i})$ such that 
\begin{equation}\label{Reidemeister2}
R(f^j) = R(\tilde{f}_*^j)= \prod_{i=1}^{n}\prod_{v\in P_i}
|t^j-1|_{v}^{-1} = \prod_{i=1}^{n}|t^j-1|_{P_i}^{-1}  =
\prod_{i=1}^{n} |t^j-1|_{P_i^\infty\cup  S_i}
\end{equation}
 for all $j\in \mathbb{N}$.
 If the Reidemeister zeta function $R_f(z)=R_{\tilde{f}_*}(z)$ has radius of convergence $R=e^{-h(f)}$, where $h(f)=\sum_{i=1}^{r}\log p(i)$,  then it is either a rational function, namely
$$R_f(z)= R_{\tilde{f}_*}(z)=(1-e^{h(f)}z)^{-1}$$
or the circle $|z|=e^{-h(f)}$ is a natural boundary for the function $R_f(z)=R_{\tilde{f}_*}(z)$.
 
\end{teo}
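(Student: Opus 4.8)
The plan is to deduce the theorem directly from Theorem~\ref{Main} and Theorem~\ref{dichotomy}, applied to the automorphism $\tilde{f}_*$ of $\Gamma$, the only new ingredient being Lemma~\ref{Boju} together with its compatibility with iteration, i.e.\ the identity $R(f^j)=R(\tilde{f}_*^j)$ for all $j\in\mathbb{N}$.

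First I would check that passing to iterates is compatible with the correspondence $f\mapsto\tilde{f}_*$. If $\tilde{f}$ is the fixed reference lift of $f$, then $p\circ\tilde{f}^{\,j}=f^{j}\circ p$, so $\tilde{f}^{\,j}$ is a lift of $f^{j}$; iterating the defining relation $\tilde{f}_*(\gamma)\circ\tilde{f}=\tilde{f}\circ\gamma$ yields, by an immediate induction, $\tilde{f}^{\,j}\circ\gamma=\tilde{f}_*^{\,j}(\gamma)\circ\tilde{f}^{\,j}$, so that the endomorphism of $\Gamma$ induced by the lift $\tilde{f}^{\,j}$ of $f^{j}$ is $\tilde{f}_*^{\,j}$. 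Applying Lemma~\ref{Boju} to $f^{j}$ in place of $f$ then gives $R(f^{j})=R(\tilde{f}_*^{\,j})$ for every $j$. Since $\tilde{f}_*$ is an automorphism, each $\tilde{f}_*^{\,j}-\mathrm{Id}$ is a monomorphism (condition (2)), so by Lemma~\ref{miles2} the quotient $\Gamma/(\tilde{f}_*^{\,j}-\mathrm{Id})\Gamma$ is finite and each $R(\tilde{f}_*^{\,j})$ is finite; hence both power series $R_f(z)$ and $R_{\tilde{f}_*}(z)$ are defined and, having identical coefficients, coincide: $R_f(z)=R_{\tilde{f}_*}(z)$.

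Next, $\Gamma$ is a torsion abelian non-finitely generated group which, viewed as a $\mathbb{Z}[t]$-module via $tg=\tilde{f}_*(g)$, satisfies conditions (1)--(3), so Theorem~\ref{Main} applied to $\tilde{f}_*$ produces the function fields $\mathbb{K}_i=\mathbb{F}_{p(i)}(t)$ and the sets of places $P_i$, $P_i^\infty$, $S_i$ together with formula (\ref{Reidemeister1}) for $R(\tilde{f}_*^{\,j})$; combined with $R(f^{j})=R(\tilde{f}_*^{\,j})$ this is precisely formula (\ref{Reidemeister2}). Finally, under the extra hypothesis that $R_f(z)=R_{\tilde{f}_*}(z)$ has radius of convergence $e^{-h(f)}$ with $h(f)=\sum_{i=1}^{r}\log p(i)$, Theorem~\ref{dichotomy} applied to $\tilde{f}_*$ gives the dichotomy: either $R_{\tilde{f}_*}(z)=(1-e^{h(f)}z)^{-1}$, or the circle $|z|=e^{-h(f)}$ is a natural boundary for $R_{\tilde{f}_*}(z)$; rewriting through $R_f(z)=R_{\tilde{f}_*}(z)$ yields the assertion. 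I do not expect a genuine obstacle: the argument is bookkeeping, and the only step deserving explicit verification is the identity $R(f^{j})=R(\tilde{f}_*^{\,j})$, that is, that $\tilde{f}\mapsto\tilde{f}_*$ respects composition of lifts, so that it carries the $j$-th iterate of $f$ to the $j$-th iterate of $\tilde{f}_*$.
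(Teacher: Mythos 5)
Your proposal is correct and matches the paper's (unwritten) argument: reduce to $R(f^j)=R(\tilde{f}_*^{\,j})$ via Lemma~\ref{Boju} applied to each iterate, then invoke Theorems~\ref{Main} and~\ref{dichotomy} for the automorphism $\tilde{f}_*$. The paper gives no proof beyond the remark preceding the statement; you supply the useful detail that the correspondence $\tilde{f}\mapsto\tilde{f}_*$ respects composition of lifts, so that the $j$-th iterate of $f$ induces $\tilde{f}_*^{\,j}$ and hence $R(f^j)=R(\tilde{f}_*^{\,j})$ for every $j$.
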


Similarly, theorem 5.8 in \cite{FeZi20} implies the following   P\'olya-Carlson dichotomy between rationality and a natural boundary
 for  the Reidemeister zeta function of continuous map.
 
\begin{teo}\label{Main2}
Suppose that the fundamental group $ \Gamma $ of the topological space $X$ is  a  countable  abelian group that is a subgroup  of~$\mathbb{Q}^d$, where $d\geqslant 1$ and $\tilde{f}_*: \Gamma \rightarrow \Gamma$ is an automorphism.
Suppose that the group $\Gamma$ as  $R = \mathbb{Z}[t]$- module  satisfies the following conditions:

(1) the set of associated primes $Ass(\Gamma)$ is finite and consists entirely of non-zero
principal ideals of   the polynomial  ring $R = \mathbb{Z}[t]$,

(2) the map $g \rightarrow (t^j -1)g$  is a monomorphism of $\Gamma$ for all $j\in \mathbb{N}$\\
(equivalently, $t^j - 1 \notin \mathfrak{p}$ for all $ \mathfrak{p}\in Ass(\Gamma)$ and all $j\in
\mathbb{N}$),

(3) for each $\mathfrak{p}\in Ass(\Gamma)$, $ m(\mathfrak{p})= \dim_{\mathbb{K}(\mathfrak{p})}G_{\mathfrak{p}} < \infty$.

Then there exist
algebraic number fields~ $\mathbb{K}_1,\dots,\mathbb{K}_n$, 

sets
of finite places~ $P_i \subset \mathcal{P}(\mathbb{K}_i)$,
 $S_i=\mathcal{P}(\mathbb{K}_i)\setminus
P_i$,
 and
elements~$\xi_i\in \mathbb{K}_i$, no one of which is a root of
unity for~$i=1,\dots,n$, such that 
\begin{equation}\label{Reidemeister3}
R(f^j) =\prod_{i=1}^{n}\prod_{v\in P_i}
|\xi_i^j-1|_{v}^{-1} = \prod_{i=1}^{n}|\xi_i^j-1|_{P_i}^{-1}  =
\prod_{i=1}^{n}
|\xi_i^j-1|_{P_i^\infty\cup S_i}
\end{equation}
 for all $j\in \mathbb{N}$.
 
 Suppose that the last
product in~\eqref{Reidemeister3} only involves
finitely many places and that~$|\xi_i|_v\neq 1$ for all~$v$
in the set of infinite places $P_i^\infty$ of $\mathbb{K}_i$ and all $i=1,\dots,n$. \\
Then the Reidemeister zeta function $R_{f}(z)=R_{\tilde{f}_*}(z)$ is
either rational function or has a natural boundary at its circle of
convergence, and the latter occurs if and only if~$|\xi_i|_v=1$
for some~$v\in S_i$,~$1\le i\le n$.
\end{teo}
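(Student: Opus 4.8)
The plan is to deduce everything from the already-established dichotomy for automorphisms of countable abelian subgroups of $\mathbb{Q}^d$ (Theorem~5.8 of \cite{FeZi20}), transporting it across the identification of Reidemeister numbers of $f$ with those of the induced endomorphism $\tilde{f}_*$. First I would invoke Lemma~\ref{Boju}: lifting classes of $f^j$ are in bijection with $\tilde{f}_*^j$-twisted conjugacy classes in $\pi\cong\Gamma$, so $R(f^j)=R(\tilde{f}_*^j)$ for every $j\in\mathbb{N}$. Here one must check that the endomorphism induced by $f^j$ is exactly the $j$-th iterate $(\tilde{f}_*)^j$; this follows by composing the defining relation $\tilde{f}_*(\gamma)\circ\tilde{f}=\tilde{f}\circ\gamma$ with itself $j$ times. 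Consequently the two power series agree termwise, $R_f(z)=R_{\tilde{f}_*}(z)$, and it suffices to prove all assertions for the automorphism $\tilde{f}_*$ of $\Gamma$.

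Next, since $\tilde{f}_*\colon\Gamma\to\Gamma$ is an automorphism of a countable abelian group that embeds in $\mathbb{Q}^d$, and $\Gamma$ regarded as a $\mathbb{Z}[t]$-module via $tg=\tilde{f}_*(g)$ satisfies hypotheses (1)--(3), the pair $(\Gamma,\tilde{f}_*)$ lies precisely in the scope of Theorem~5.8 of \cite{FeZi20}. Applying that theorem directly produces the algebraic number fields $\mathbb{K}_i$, the sets of finite places $P_i$ (hence $S_i=\mathcal{P}(\mathbb{K}_i)\setminus P_i$), and the non-root-of-unity elements $\xi_i\in\mathbb{K}_i$ for which
$$R(\tilde{f}_*^j)=\prod_{i=1}^{n}|\xi_i^j-1|_{P_i}^{-1}=\prod_{i=1}^{n}|\xi_i^j-1|_{P_i^\infty\cup S_i},$$
the second equality being the Artin product formula. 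Combined with $R(f^j)=R(\tilde{f}_*^j)$ this is exactly \eqref{Reidemeister3}.

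Finally, under the stated finiteness hypothesis on the last product and the condition $|\xi_i|_v\neq 1$ at all infinite places $v\in P_i^\infty$, Theorem~5.8 of \cite{FeZi20} yields the dichotomy: $R_{\tilde{f}_*}(z)$ is either a rational function or has its circle of convergence as a natural boundary, with the boundary case occurring exactly when $|\xi_i|_v=1$ for some $v\in S_i$. Since $R_f(z)=R_{\tilde{f}_*}(z)$, the same conclusion holds verbatim for $R_f(z)$. I expect no genuine analytic obstacle: the place-theoretic formula for the Reidemeister numbers and the overconvergence argument underlying the P\'olya--Carlson dichotomy are already contained in \cite{FeZi20}. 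The only point I would spell out carefully is the compatibility of iterates under the correspondence $\tilde{f}\mapsto\tilde{f}_*$ together with the observation that hypotheses (1)--(3) are precisely the module-theoretic conditions under which the cited theorem was proved, so that the reduction via Lemma~\ref{Boju} is legitimate.
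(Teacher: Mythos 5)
Your proof is correct and matches the paper's own (largely implicit) argument exactly: the paper likewise reduces $R(f^j)$ to $R(\tilde{f}_*^j)$ via Lemma~\ref{Boju} and then cites Theorem~5.8 of \cite{FeZi20} as supplying both the place-theoretic formula and the dichotomy. Your remark on the compatibility of iterates under $\tilde{f}\mapsto\tilde{f}_*$ merely spells out a detail the paper leaves to the reader.
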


\section{The rationality  of  the coincidence Reidemeister  zeta function  for endomorphisms of  finitely generated torsion-free nilpotent groups}

\begin{ex}(\cite{FK}, Example 1.3)\label{ex1}
  Let $G = \mathbb{Z}$ be the infinite cyclic group, written
  additively, and let
  $$
  \phi \colon \mathbb{Z} \to \mathbb{Z}, \quad x \mapsto d_\varphi x \qquad \text{and} \qquad
  \psi \colon \mathbb{Z} \to \mathbb{Z}, \quad x \mapsto d_\psi x
  $$
  for $d_\phi, d_\psi \in \mathbb{Z}$.  The coincidence Reidemeister number $R(\phi, \psi)$ of  endomorphisms $\phi,\psi $ of  an Abelian group $G$  coincides with the cardinality of the  quotient group $ \coker(\phi-\psi)=G/{\rm Im}(\phi-\psi)$
(or $\coker(\psi -\phi)=G/{\rm Im}(\psi-\phi)$). 
Hence we have 
  $$
  R(\phi^{\, n},\psi^{\, n}) =
  \begin{cases}
    \lvert d_\psi^{\, n} - d_\phi^{\, n} \rvert & \text{if $d_\phi^{\, n}
      \not = d_\psi^{\, n}$,} \\
    \infty & \text{otherwise}.
  \end{cases}
  $$
  Consequently, $(\phi,\psi)$ is tame precisely when
  $\lvert d_\phi \rvert \not = \lvert d_\psi \rvert$ and, in this case, 
  $$
  R_{\phi,\psi}(z) = \frac{1 - d_2 z}{1 - d_1 z} \qquad \text{where
    $d_1 = \max \{ \lvert d_\phi \rvert, \lvert d_\psi \rvert\}$
    and
    $d_2 = \frac{d_\phi d_\psi}{d_1}$.}
  $$
\end{ex}

This simple example (or at least special cases of it) are known.  The
aim of the current section is to generalise this example to finitely generated torsion-free nilpotent groups.
Let $G$ be a finitely generated group and $\phi, \psi : G\rightarrow G$
two endomorphisms. 

\begin{lem}\label{red}
Let  $\phi, \psi : G\rightarrow G$ are two automorphisms.
Two elements $x,y$ of $G$ are $\psi^{-1}\phi$-conjugate if and only if
elements $\psi(x)$ and $\psi(y)$ are $(\psi,\phi)$-conjugate. Therefore the Reidemeister number
$R(\psi^{-1}\phi)$ is equal to $R(\phi,\psi)$.
For a tame pair of  commuting automorphisms $\phi, \psi : G\rightarrow G$
 the coicidence Reidemeister zeta function $R_{\phi,\psi}(z)$ is equal to the Reidemeister zeta  function $ R_{\psi^{-1}\phi}(z)$.
\end{lem}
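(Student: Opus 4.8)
The plan is to establish the claimed bijection between twisted conjugacy classes by a direct computation, and then to bootstrap it to all iterates using the commutativity hypothesis.

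First I would prove the bijection statement for the pair $(\phi,\psi)$ itself. Suppose $x$ and $y$ are $\psi^{-1}\phi$-conjugate, so $y = g\,x\,(\psi^{-1}\phi)(g^{-1})$ for some $g\in G$. Applying the automorphism $\psi$ to both sides and using $\psi\circ(\psi^{-1}\phi)=\phi$ gives $\psi(y)=\psi(g)\,\psi(x)\,\phi(g^{-1})$, which is precisely the statement that $\psi(x)$ and $\psi(y)$ are $(\psi,\phi)$-conjugate (with witness $g$). Conversely, if $\psi(y)=\psi(h)\,\psi(x)\,\phi(h^{-1})$, then applying $\psi^{-1}$ recovers $y=h\,x\,(\psi^{-1}\phi)(h^{-1})$. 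Since $\psi$ is a bijection of $G$ carrying the $\psi^{-1}\phi$-conjugacy relation onto the $(\psi,\phi)$-conjugacy relation in both directions, it descends to a bijection from the set of $\psi^{-1}\phi$-conjugacy classes onto the set of $(\psi,\phi)$-conjugacy classes. Counting classes yields $R(\psi^{-1}\phi)=R(\phi,\psi)$; in particular one side is finite if and only if the other is.

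For the zeta-function identity, recall that both $R_{\phi,\psi}(z)$ and $R_{\psi^{-1}\phi}(z)$ are obtained by exponentiating a power series whose $n$-th coefficient is $\tfrac{1}{n}R(\phi^n,\psi^n)$, respectively $\tfrac{1}{n}R((\psi^{-1}\phi)^n)$, so it suffices to show $R((\psi^{-1}\phi)^n)=R(\phi^n,\psi^n)$ for every $n\in\mathbb{N}$. Here the commutativity assumption enters: from $\phi\psi=\psi\phi$ one gets $\phi\psi^{-1}=\psi^{-1}\phi$, hence $(\psi^{-1}\phi)^n=\psi^{-n}\phi^n=(\psi^n)^{-1}\phi^n$. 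Now I would apply the first part of the lemma to the pair of automorphisms $(\phi^n,\psi^n)$ (which are again automorphisms), obtaining $R((\psi^n)^{-1}\phi^n)=R(\phi^n,\psi^n)$, i.e. $R((\psi^{-1}\phi)^n)=R(\phi^n,\psi^n)$. Summing $\tfrac{1}{n}R(\phi^n,\psi^n)z^n$ and exponentiating then gives $R_{\psi^{-1}\phi}(z)=R_{\phi,\psi}(z)$; moreover tameness of the pair $(\phi,\psi)$ is equivalent to tameness of $\psi^{-1}\phi$, so both zeta functions are simultaneously well defined.

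The argument is essentially a one-line computation; the only point requiring care — and the only place the commutativity hypothesis is genuinely used — is the identity $(\psi^{-1}\phi)^n=(\psi^n)^{-1}\phi^n$, which is what lets the $n=1$ bijection be reused for every iterate. Without commutativity the equality $R(\psi^{-1}\phi)=R(\phi,\psi)$ still holds, but the coincidence zeta function of $(\phi,\psi)$ need not agree with the Reidemeister zeta function of any single endomorphism.
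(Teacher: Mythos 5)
Your proof is correct and takes essentially the same route as the paper's: the paper also verifies the twisted-conjugacy equivalence by applying the automorphism $\psi$ to the defining relation and then notes the converse is obtained by reversing the steps. You go a bit further than the paper in two small but useful ways: you make the converse direction explicit rather than asserting it, and you actually carry out the zeta-function step, namely that commutativity gives $(\psi^{-1}\phi)^n=(\psi^n)^{-1}\phi^n$ so that the $n=1$ bijection applies to every iterate $(\phi^n,\psi^n)$ — a step the paper's proof omits entirely. One cosmetic point: you label the resulting relation $\psi(y)=\psi(g)\psi(x)\phi(g^{-1})$ as $(\psi,\phi)$-conjugacy following the lemma's statement, but under the paper's definition this is literally $(\phi,\psi)$-conjugacy; the lemma statement and its proof in the paper disagree on this order, and since $R(\phi,\psi)=R(\psi,\phi)$ (via $\alpha\mapsto\alpha^{-1}$) the conclusion is unaffected, but it is worth flagging the typo rather than silently inheriting it.
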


\begin{proof} If $x$ and $y$ are $\psi^{-1}\phi$-conjugate, then there is
a  $g \in G$ such that $x=g y \psi^{-1}\phi(g^{-1})$.
This implies $\psi(x)=\psi(g)\psi(y)\phi(g^{-1})$. So
 $\psi(x)$ and  $\psi(y)$ are  $(\phi,\psi)$-conjugate.  The converse statement
follows if we move in opposite direction in previous implications. 
\end{proof}
We assume  $X$ to be a connected, compact polyhedron and $f:X\rightarrow X$ to be a continuous map.
The Lefschetz zeta function of a discrete dynamical system $f^n$ is defined as
$
L_f(z) := \exp\left(\sum_{n=1}^\infty \frac{L(f^n)}{n} z^n \right),
$
where
\begin{equation*}\label{Lef}
 L(f^n) := \sum_{k=0}^{\dim X} (-1)^k \tr\Big[f_{*k}^n:H_k(X;Q)\to H_k(X;Q)\Big]
\end{equation*}
is the Lefschetz number of the iterate $f^n$ of $f$. The Lefschetz zeta function is a rational function of $z$ and is given by the formula:
$$
L_f(z) = \prod_{k=0}^{\dim X}
          \det\big({I}-f_{*k}.z\big)^{(-1)^{k+1}}.
$$

In this section we consider finitely generated torsion-free nilpotent group $\Gamma$. It is well known \cite{mal} that such group $\Gamma$ is a uniform discrete subgroup of a simply connected nilpotent Lie group $G$ (uniform means that the coset space $G/ \Gamma$ is compact). The coset space $M=G/ \Gamma$ is called a nilmanifold.
 Since $\Gamma=\pi_1(M)$ and $M$  is a $K(\Gamma,
1)$, every endomorphism $\phi:\Gamma \to \Gamma $ can be realized by a selfmap
$f:M\to M$ such that $f_*=\phi$ and thus $R(f)=R(\phi)$. Any endomorphism  $\phi:\Gamma \to \Gamma $ can be uniquely extended to an endomorphism  $F: G\to G$. Let $\tilde F:\tilde G\to \tilde G $ be the corresponding Lie algebra endomorphism induced from $F$. 
 
\begin{lem}(cf. Theorem 23 of \cite{Fel00} and Theorem 5 of \cite{fhw})\label{nilpot}
Let $\phi : \Gamma\rightarrow \Gamma$ be a tame endomorphism of
 a finitely generated torsion free nilpotent group.
Then the Reidemeister  zeta function $R_\phi(z)=R_f(z)$ is a rational function and is equal to
\begin{equation}
R_\phi(z)=R_f(z)=L_f((-1)^pz)^{(-1)^r},
\end{equation}
 where  $p$ the number of $\mu\in Spectr(\tilde F)$ such that
$\mu <-1$, and $r$ the number of real eigenvalues of $\tilde F$ whose
absolute value is $>1$.
\end{lem}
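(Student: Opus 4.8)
The plan is to transport the problem to the nilmanifold $M=G/\Gamma$ and reduce it to the Lefschetz numbers of the realizing map. Following the discussion preceding the lemma, I would fix $M=G/\Gamma$ with $\pi_1(M)=\Gamma$, a selfmap $f\colon M\to M$ with $f_*=\phi$, and the Lie algebra endomorphism $\tilde F\colon\tilde G\to\tilde G$ induced by the extension $F\colon G\to G$ of $\phi$. The iterate $f^n$ realizes $\phi^n$ and is induced by $\tilde F^n$, so $R(\phi^n)=R(f^n)$ for all $n$. I would then invoke the two classical facts about selfmaps of nilmanifolds: Anosov's theorem that the Nielsen number satisfies $N(g)=|L(g)|$, and the fact that $N(g)=R(g)$ whenever $R(g)<\infty$. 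Since $\phi$ is tame, $R(\phi^n)=R(f^n)<\infty$ for every $n$, so $R(\phi^n)=|L(f^n)|$ for all $n\in\mathbb{N}$.

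Next I would make $L(f^n)$ explicit via Nomizu's theorem, which identifies $H_*(M;\mathbb{Q})$ with the Chevalley--Eilenberg homology $H_*(\Lambda^\bullet\tilde G)$, the map induced by $f^n$ being the one induced by $\Lambda^\bullet\tilde F^n$. Applying the Hopf trace formula to the complex $\Lambda^\bullet\tilde G$ (the alternating sum of traces on a complex equals the alternating sum on its homology) gives
$$L(f^n)=\sum_{k=0}^{m}(-1)^k\tr\!\big(\Lambda^k\tilde F^n\big)=\det\!\big(\mathrm{Id}-\tilde F^n\big)=\prod_{i=1}^{m}\big(1-\mu_i^n\big),$$
where $m=\dim\tilde G$ and $\mu_1,\dots,\mu_m$ are the eigenvalues of $\tilde F$ counted with multiplicity.

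Finally I would track the sign of $L(f^n)$ and assemble the zeta function. Tameness forces $L(f^n)\neq 0$ for all $n$, hence no $\mu_i$ is a root of unity; in particular $\mu_i\neq\pm 1$. Nonreal eigenvalues occur in conjugate pairs, contributing factors $(1-\mu_i^n)(1-\overline{\mu_i}^{\,n})=|1-\mu_i^n|^2>0$, and each real $\mu_i$ with $|\mu_i|<1$ contributes a factor in $(0,2)$; none of these affects the sign. Each real $\mu_i>1$ contributes sign $-1$ for every $n$, and each real $\mu_i<-1$ contributes sign $(-1)^{n+1}$. Writing $q$ for the number of real eigenvalues greater than $1$ and $p$ for the number less than $-1$ (so $r=p+q$), one obtains $L(f^n)=(-1)^q(-1)^{p(n+1)}R(\phi^n)$, i.e. $R(\phi^n)=(-1)^r\big((-1)^p\big)^n L(f^n)$. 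Substituting into the defining series,
$$\sum_{n=1}^{\infty}\frac{R(\phi^n)}{n}z^n=(-1)^r\sum_{n=1}^{\infty}\frac{L(f^n)}{n}\big((-1)^pz\big)^n,$$
and exponentiating yields $R_\phi(z)=R_f(z)=L_f\big((-1)^pz\big)^{(-1)^r}$, which is rational since $L_f$ is. The substantive ingredients --- the Anosov formula $N=|L|$, the coincidence $N=R$ in the finite case, and Nomizu's identification --- are exactly what is cited from \cite{Fel00} and \cite{fhw}, so the only genuinely new point is the sign bookkeeping; there the one thing to watch, and the main obstacle, is that tameness must be used to rule out root-of-unity eigenvalues, so that the sign of $L(f^n)$ is the clean function of $n$, $p$, $q$ used above.
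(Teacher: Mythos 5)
The paper does not give a proof of this lemma; it is stated as a citation of Theorem~23 of \cite{Fel00} and Theorem~5 of \cite{fhw}, and your argument (realize $\phi$ on the nilmanifold $M=G/\Gamma$, apply Anosov's $N=|L|$ and $N=R$ when $R<\infty$, compute $L(f^n)=\det(\mathrm{Id}-\tilde F^n)=\prod_i(1-\mu_i^n)$ via Nomizu plus the Hopf trace formula, and track signs of the real eigenvalues) is precisely the standard argument those references use. Your sign bookkeeping is correct, and you rightly observe that tameness is what rules out root-of-unity eigenvalues, which is the one delicate point.
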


Every pair  of automorphisms $\phi, \psi : \Gamma\rightarrow \Gamma$ of  finitely generated torsion free nilpotent group $\Gamma$ can be realized by a pair of homeomorphisms
$f,g :M\to M$ such that $f_*=\phi$ , $g_*=\psi$ and thus $R(g^{-1}f)=R(\psi^{-1}\phi)=R(\phi,\psi)$. An automorphism  $\psi^{-1}\phi:\Gamma \to \Gamma $ can be uniquely extended to an automorphism  $L: G\to G$. Let $\tilde L:\tilde G\to \tilde G $ be the corresponding Lie algebra automorphism induced from $L$. 

Lemma \ref{red} and Lemma \ref{nilpot} imply the following

\begin{teo}\label{nil2}
Let  $\phi, \psi : \Gamma\rightarrow \Gamma$ be a tame pair  of commuting automorphisms of a finitely generated torsion-free nilpotent group $\Gamma$.
Then the coincidence  Reidemeister  zeta function $R_{\phi,\psi}(z)$  is a rational function and is equal to
\begin{equation}
R_{\phi,\psi}(z)=R_{\psi^{-1}\phi}(z)= R_{g^{-1}f}(z) = L_{g^{-1}f}((-1)^pz)^{(-1)^r},
\end{equation}
 where  $p$ the number of $\mu\in Spectr(\tilde L)$ such that
$\mu <-1$, and $r$ the number of real eigenvalues of $\tilde L$ whose
absolute value is $>1$.
\end{teo}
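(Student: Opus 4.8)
The plan is to deduce the statement by concatenating Lemma~\ref{red} and Lemma~\ref{nilpot}, after passing from the commuting pair $(\phi,\psi)$ to the single automorphism $\psi^{-1}\phi$ and then realizing everything on the nilmanifold $M=G/\Gamma$.

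First I would check that $\psi^{-1}\phi$ is a tame endomorphism of $\Gamma$. Since $\phi$ and $\psi$ commute, so do $\phi^{n}$ and $\psi^{n}$ for every $n$, whence $(\psi^{-1}\phi)^{n}=\psi^{-n}\phi^{n}$; applying the numerical assertion of Lemma~\ref{red} to the commuting pair $(\phi^{n},\psi^{n})$ gives $R\big((\psi^{-1}\phi)^{n}\big)=R(\psi^{-n}\phi^{n})=R(\phi^{n},\psi^{n})$, which is finite for all $n$ because $(\phi,\psi)$ is tame. Moreover, applying the zeta-function assertion of Lemma~\ref{red} to the commuting pair $(\phi,\psi)$ yields $R_{\phi,\psi}(z)=R_{\psi^{-1}\phi}(z)$.

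Next I would bring in the geometric realization. By Malcev's theorem \cite{mal}, $\Gamma$ is a uniform lattice in a simply connected nilpotent Lie group $G$, and $M=G/\Gamma$ is a nilmanifold which is a $K(\Gamma,1)$; hence $\phi$ and $\psi$ are realized by homeomorphisms $f,g\colon M\to M$ with $f_{*}=\phi$, $g_{*}=\psi$. Then $g^{-1}f$ is a homeomorphism of $M$ with $(g^{-1}f)_{*}=(g_{*})^{-1}f_{*}=\psi^{-1}\phi$, so $R\big((g^{-1}f)^{n}\big)=R\big((\psi^{-1}\phi)^{n}\big)$ for all $n$ and therefore $R_{g^{-1}f}(z)=R_{\psi^{-1}\phi}(z)$. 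The automorphism $\psi^{-1}\phi$ extends uniquely to an automorphism $L\colon G\to G$, whose induced Lie algebra automorphism is $\tilde L\colon\tilde G\to\tilde G$.

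Finally I would apply Lemma~\ref{nilpot} to the tame endomorphism $\psi^{-1}\phi$, realized by the self-map $g^{-1}f$: this gives that $R_{\psi^{-1}\phi}(z)=R_{g^{-1}f}(z)$ is rational and equals $L_{g^{-1}f}\big((-1)^{p}z\big)^{(-1)^{r}}$, where $p$ is the number of $\mu\in Spectr(\tilde L)$ with $\mu<-1$ and $r$ is the number of real eigenvalues of $\tilde L$ of absolute value $>1$. Combined with $R_{\phi,\psi}(z)=R_{\psi^{-1}\phi}(z)$ from the first step, this is exactly the asserted chain of equalities. The one place needing care — and the only real obstacle, a minor one — is making sure the commutativity hypothesis genuinely passes to every pair $(\phi^{n},\psi^{n})$ (so that Lemma~\ref{red} applies to the iterates, not merely to $(\phi,\psi)$ itself), and that the spectrum of $\tilde L$, rather than of the separate extensions of $\phi$ and $\psi$, is the invariant to feed into Lemma~\ref{nilpot}; once these are granted, the proof is a direct composition of the two lemmas.
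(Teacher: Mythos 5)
Your proposal is correct and follows essentially the same route the paper takes: the paper's proof is literally the sentence "Lemma~\ref{red} and Lemma~\ref{nilpot} imply the following," preceded by the remark that a pair of automorphisms of $\Gamma$ is realized by homeomorphisms $f,g$ of the nilmanifold $M=G/\Gamma$ with $(g^{-1}f)_*=\psi^{-1}\phi$, and that $\psi^{-1}\phi$ extends uniquely to $L\colon G\to G$ inducing $\tilde L$. Your extra care in checking that commutativity passes to the iterates (so $R((\psi^{-1}\phi)^n)=R(\phi^n,\psi^n)$, making $\psi^{-1}\phi$ tame and the zeta identity of Lemma~\ref{red} applicable) is exactly the intended content, just spelled out more explicitly than the paper does.
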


 For an arbitrary group $G$, we can define the $k$-fold commutator group $\gamma_k(G)$
inductively as
$\gamma_1(G) := G$
and $\gamma_{k+1}(G) := [G, \gamma_k(G)]$.
Let $G$ be a group. For a subgroup $H \leqslant G$, we define the isolator $\sqrt[G]{H}$ of $H$ in $G$ as:
$\sqrt[G]{H} = \{g \in G \mbox{ } | \mbox{ } g^n \in H \mbox{ for some } n \in \mathbb{N} \}.$
Note that the isolator of a subgroup $H \leqslant G$ doesn't have to be a subgroup in general. For example, the isolator of the trivial group is the set of torsion elements of G.
\begin{lem}
(see \cite{dek}, Lemma 1.1.2 and Lemma 1.1.4)
Let $G$ be a group.
 Then
\begin{enumerate}
\item[(i)] for all $k\in\mathbb{N}, \sqrt[G]{\gamma_k(G)}$ is a fully characteristic subgroup of $G$,
\item[(ii)] for all $k\in\mathbb{N}$, the factor $G/\sqrt[G]{\gamma_k(G)}$ is torsion-free,
\item[(iii)] for all $k,l\in\mathbb{N}$,  the commutator $[\sqrt[G]{\gamma_k(G)},\sqrt[G]{\gamma_l(G)}]\leq \sqrt[G]{\gamma_{k+l}(G)}$,
\item[(iv)] for all $k,l\in\mathbb{N}$ such that $k\geqslant l \mbox{ if } M:=\sqrt[G]{\gamma_l(G)}$, then $$\sqrt[G/M]{\gamma_k(G/M)} = \sqrt[G]{\gamma_l(G)}/M. $$
\end{enumerate}
\end{lem}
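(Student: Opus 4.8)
The plan is to treat all four parts through a single structural observation: since $\gamma_k(G)$ contains every $k$-fold commutator, the quotient $G/\gamma_k(G)$ is nilpotent of class at most $k-1$, and by a classical theorem the elements of finite order in a nilpotent group form a fully invariant subgroup, with torsion-free quotient. Writing $q_k\colon G\to G/\gamma_k(G)$ for the projection, one has $g\in\sqrt[G]{\gamma_k(G)}$ exactly when $q_k(g)$ has finite order, so $\sqrt[G]{\gamma_k(G)}=q_k^{-1}\!\bigl(\tau\bigl(G/\gamma_k(G)\bigr)\bigr)$, where $\tau(\cdot)$ is the torsion subgroup. This identification already makes $\sqrt[G]{\gamma_k(G)}$ a normal subgroup and yields $G/\sqrt[G]{\gamma_k(G)}\cong\bigl(G/\gamma_k(G)\bigr)/\tau\bigl(G/\gamma_k(G)\bigr)$, which is torsion-free.

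First I would dispatch (i) and (ii). For (i) it remains to see that $\sqrt[G]{\gamma_k(G)}$ is fully characteristic: $\gamma_k$ is a verbal subgroup, hence $\alpha(\gamma_k(G))\subseteq\gamma_k(G)$ for every endomorphism $\alpha$ of $G$, so if $g^n\in\gamma_k(G)$ then $\alpha(g)^n=\alpha(g^n)\in\gamma_k(G)$ and $\alpha$ maps $\sqrt[G]{\gamma_k(G)}$ into itself. Part (ii) is then immediate from the definition: if $g^n\in\sqrt[G]{\gamma_k(G)}$, pick $m$ with $(g^n)^m\in\gamma_k(G)$; then $g^{nm}\in\gamma_k(G)$, so $g\in\sqrt[G]{\gamma_k(G)}$ — equivalently, $G/\sqrt[G]{\gamma_k(G)}$ is the quotient of the nilpotent group $G/\gamma_k(G)$ by its torsion subgroup.

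The substantive part is (iii). Since $[\sqrt[G]{\gamma_k(G)},\sqrt[G]{\gamma_l(G)}]$ is generated by the commutators $[x,y]$ with $x\in\sqrt[G]{\gamma_k(G)}$ and $y\in\sqrt[G]{\gamma_l(G)}$, and the target $\sqrt[G]{\gamma_{k+l}(G)}$ is a subgroup, it suffices to place each such $[x,y]$ in $\sqrt[G]{\gamma_{k+l}(G)}$. Put $N=\sqrt[G]{\gamma_{k+l}(G)}$; by (i)--(ii), $N\trianglelefteq G$ and $G/N$ is torsion-free, while $\gamma_{k+l}(G)\subseteq N$ forces $\gamma_{k+l}(G/N)=1$, so $G/N$ is torsion-free nilpotent even though $G$ need not be. Replacing $G$ by $G/N$, the claim reduces to: if $G$ is torsion-free nilpotent with $\gamma_{k+l}(G)=1$ and $x^a\in\gamma_k(G)$, $y^b\in\gamma_l(G)$ for some $a,b\in\mathbb{N}$, then $[x,y]=1$. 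Here I would invoke Mal'cev's theorem: embed $G$ in its Mal'cev completion $G^{\mathbb{Q}}$, a radicable torsion-free nilpotent group with unique roots in which each $\gamma_i(G^{\mathbb{Q}})$ is an isolated subgroup equal to $\sqrt[G^{\mathbb{Q}}]{\gamma_i(G)}$. Since $\gamma_k(G^{\mathbb{Q}})$ is isolated and $x^a\in\gamma_k(G)\subseteq\gamma_k(G^{\mathbb{Q}})$, we get $x\in\gamma_k(G^{\mathbb{Q}})$, and similarly $y\in\gamma_l(G^{\mathbb{Q}})$. The standard inclusion $[\gamma_k(H),\gamma_l(H)]\subseteq\gamma_{k+l}(H)$, valid in any group $H$, then gives $[x,y]\in\gamma_{k+l}(G^{\mathbb{Q}})=\sqrt[G^{\mathbb{Q}}]{\gamma_{k+l}(G)}=\sqrt[G^{\mathbb{Q}}]{\{1\}}=\{1\}$, as required. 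An alternative avoiding the completion is an induction on the nilpotency class of $G$ via the Hall--Petrescu collection formula, used to control, for each $n$, the term of the lower central series containing $[x^n,y]\,[x,y]^{-n}$.

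Finally, for (iv): with $M=\sqrt[G]{\gamma_l(G)}$ and $k\geqslant l$ one has $\gamma_k(G)\subseteq\gamma_l(G)\subseteq M$, hence $\gamma_k(G/M)=\gamma_k(G)M/M=1$; by (ii), $G/M$ is torsion-free, so $\sqrt[G/M]{\gamma_k(G/M)}=\sqrt[G/M]{\{1\}}=\{1\}=M/M=\sqrt[G]{\gamma_l(G)}/M$. I expect (iii) to be the only genuine obstacle: the passage from "$x^a\in\gamma_k(G)$" to "some power of $[x,y]$ lies in $\gamma_{k+l}(G)$" is where an idea is needed, and the cleanest route is the reduction to the torsion-free nilpotent case followed by the Mal'cev completion, in which roots exist and the lower central series of $G^{\mathbb{Q}}$ is transparently the system of isolators of the $\gamma_i(G)$; the elementary Hall--Petrescu alternative works but requires careful bookkeeping of the correction terms.
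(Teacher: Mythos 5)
Your argument is correct, but note that the paper offers no proof of this lemma at all --- it is quoted verbatim from Dekimpe's book (Lemma 1.1.2 and 1.1.4 of \cite{dek}) --- so your proposal supplies a self-contained argument where the paper only cites the literature. Your key structural observation, that $\sqrt[G]{\gamma_k(G)}$ is the preimage under $G\to G/\gamma_k(G)$ of the torsion subgroup of the nilpotent group $G/\gamma_k(G)$, cleanly disposes of (i), (ii) and (iv) (in particular it settles the point the paper itself flags, namely that an isolator need not a priori be a subgroup). For (iii), your reduction modulo $N=\sqrt[G]{\gamma_{k+l}(G)}$ to the torsion-free nilpotent case is the right move, and the Mal'cev-completion step is legitimate: the completion exists for arbitrary (not just finitely generated) torsion-free nilpotent groups, and $\gamma_i(G^{\mathbb{Q}})=\sqrt[G^{\mathbb{Q}}]{\gamma_i(G)}$ is the standard isolator description of its lower central series; Dekimpe's own proof runs instead through an elementary commutator-collection induction of the Hall--Petrescu type, which you correctly identify as the machinery-free alternative. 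The trade-off is the usual one: your route is short but imports the theory of radicable nilpotent groups, while the collection-formula route is elementary but bookkeeping-heavy. One remark on the statement rather than your proof: as printed, part (iv) with $k\geqslant l$ makes both sides trivial (since $\gamma_k(G)\leqslant\gamma_l(G)\leqslant M$ and $G/M$ is torsion-free), exactly as your computation shows; the intended statement in \cite{dek} is the nondegenerate version $\sqrt[G/M]{\gamma_k(G/M)}=\sqrt[G]{\gamma_k(G)}/M$ for $k\leqslant l$, which is what is actually used later, and which also follows from your setup by noting that $M\leqslant\sqrt[G]{\gamma_k(G)}$ and that the relevant quotient is torsion-free.
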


We define the adapted lower central series of a group $G$ as 
$$ G = \sqrt[G]{\gamma_1(G)}\geqslant \sqrt[G]{\gamma_2(G)} \geqslant ... \sqrt[G]{\gamma_k(G)} \geqslant ... ,$$\\
where $\gamma_k(G)$ is the $k$-th commutator of $G$.

The adapted lower central series will terminate if and only if $G$ is a torsion-free, nilpotent group. Moreover, all factors $\sqrt[G]{\gamma_k(G)}/\sqrt[G]{\gamma_{k+1}(G)}$ are torsion-free.

We are particularly interested in the case where $G$ is a finitely generated, torsion-free, nilpotent group. In this case
the factors of the adapted lower central series are finitely generated,
torsion-free, abelian groups, i.e. for all $k \in \mathbb{N}$
 we have that
 
$\sqrt[G]{\gamma_k(G)}/\sqrt[G]{\gamma_{k+1}(G)}\cong \mathbb{Z}^{d_ k},$ for some $d_ k \in \mathbb{N}. $

Let $N$ be a normal subgroup of a group $G$ and $\phi,\psi \in End(G)$ with $\phi(N)\subseteq N, \psi(N)\subseteq N$.
We denote the restriction of $\phi$ to $N$ by $\phi|_N$,  $\psi$ to $N$ by $\psi|_N$ and the induced endomorphisms on the quotient $G/N$ by $\phi',\psi'$ respectively. We then get the following commutative diagrams with exact rows:
$ $\\
\begin{equation}
\commdiag{N}{\phi|_N\mbox{,}\psi|_N}{G}{\phi\mbox{,}\psi}{G/N}{\phi'\mbox{,}\psi'}
\end{equation}

Note that both $i$ and $p$ induce functions $\hat{i}, \hat{p}$ on the set of Reidemeister classes so that the sequence\\
\begin{center}
\begin{tikzcd}
\rcoincl{|_N} \arrow[r,"\hat{i}"] & \rcoincl{} \arrow[r,"\hat{p}"] & \rcoincl{'} \arrow[r] & 0
\end{tikzcd}
\end{center}
is exact, i.e. $\hat p$ is surjective and $\hat p ^{-1}[1] = im(\hat i)$, where 1 is the identity element of $G/N$ (see also \cite{dg}).
\begin{lem}\label{ineq}
If $R(\phi|_N,\psi|_N)<\infty, R(\phi',\psi'))<\infty$ 
and $N\subseteq Z(G)$, 
then\\ $R(\phi,\psi)\leqslant R(\phi|_N,\psi|_N)R(\phi',\psi')$.
\end{lem}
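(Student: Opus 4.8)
The plan is to count Reidemeister classes by means of the exact sequence of pointed sets displayed just above the statement. Since $\hat p\colon\mathfrak{R}[\phi,\psi]\to\mathfrak{R}[\phi',\psi']$ is surjective and the target has exactly $R(\phi',\psi')$ elements, it will be enough to show that each fibre $\hat p^{-1}\big([\bar g]_{\phi',\psi'}\big)$, $\bar g\in G/N$, has at most $R(\phi|_N,\psi|_N)$ elements; summing over the $R(\phi',\psi')$ classes in $\mathfrak{R}[\phi',\psi']$ then gives the inequality. The two finiteness hypotheses serve only to make the right-hand side a finite number.

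Fix $\bar g\in G/N$ and a lift $g\in G$. First I would check that every $(\phi,\psi)$-class lying over $[\bar g]_{\phi',\psi'}$ meets the single coset $gN$: if $p(h)$ is $(\phi',\psi')$-conjugate to $\bar g$, lifting a conjugating element to $x\in G$ gives $h=\psi(x)\,g\,\phi(x^{-1})\,n$ for some $n\in N$, and since $N\subseteq Z(G)$ the central element $n$ can be moved past $\phi(x^{-1})$, so $h$ is $(\phi,\psi)$-conjugate to $gn$. Hence $n\mapsto[gn]_{\phi,\psi}$ is a surjection of $N$ onto this fibre, and it remains to bound the number of its fibres.

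Next I would analyse when $gn_1$ and $gn_2$ are $(\phi,\psi)$-conjugate. Set $\bar H=\{\bar x\in G/N:\psi'(\bar x)=\bar g\,\phi'(\bar x)\,\bar g^{-1}\}$, which is a subgroup of $G/N$, and $H=p^{-1}(\bar H)\supseteq N$. For $x\in H$ define $\theta(x)=g^{-1}\psi(x)\,g\,\phi(x^{-1})$; the defining relation of $\bar H$ says precisely that $p(\theta(x))=1$, so $\theta(x)\in N$, and by inserting $gg^{-1}$ and using centrality of $N$ one sees $\theta\colon H\to N$ is a homomorphism. Using centrality once more, $gn_1\sim_{\phi,\psi}gn_2$ holds if and only if $n_2n_1^{-1}=\theta(x)$ for some $x\in H$, i.e.\ if and only if $n_2n_1^{-1}\in\theta(H)$; so the fibre is in bijection with $N/\theta(H)$ and has cardinality $[N:\theta(H)]$.

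Finally I would compare $\theta(H)$ with the subgroup controlling $R(\phi|_N,\psi|_N)$: for $x=n\in N$ the element $\psi(n)$ lies in $N$, hence is central, so $\theta(n)=\psi(n)\phi(n^{-1})$, whence $\theta(N)=\{\psi(n)\phi(n^{-1}):n\in N\}$ is exactly the image subgroup of the abelian group $N$ by which one quotients to count $(\phi|_N,\psi|_N)$-twisted conjugacy classes, so $[N:\theta(N)]=R(\phi|_N,\psi|_N)$. Since $N\subseteq H$ we get $\theta(N)\subseteq\theta(H)$ and therefore $[N:\theta(H)]\le[N:\theta(N)]=R(\phi|_N,\psi|_N)$, which finishes the estimate. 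The main obstacle — and the only place where $N\subseteq Z(G)$ is genuinely used — is this fibre analysis: centrality is what lets each fibre be swept into one coset $gN$, makes $\theta$ multiplicative, and yields $\theta(N)\subseteq\theta(H)$; without it $\theta$ need not be a homomorphism, the fibres need not be quotients of $N$, and the inequality can fail altogether.
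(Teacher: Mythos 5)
Your argument is correct, and it is built on the same two pillars as the paper's proof — surjectivity of $\hat p$ and centrality of $N$ — but the packaging is genuinely different. The paper argues directly that the $R(\phi|_N,\psi|_N)\,R(\phi',\psi')$ elements $g_in_j$ hit every $(\phi,\psi)$-class: for arbitrary $g$, first conjugate $gN$ to $g_iN$, then conjugate the resulting error term $n\in N$ to $n_j$, and use centrality to merge the two conjugating elements into one. You instead count fibre by fibre, introduce the twisted stabilizer $\bar H\le G/N$ of $\bar g$ and the map $\theta(x)=g^{-1}\psi(x)g\phi(x^{-1})\colon H\to N$, verify that centrality makes $\theta$ a homomorphism, and identify the fibre of $\hat p$ over $[\bar g]$ exactly as $N/\theta(H)$, then bound $[N:\theta(H)]\le[N:\theta(N)]=R(\phi|_N,\psi|_N)$. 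What your route buys is sharper information: it gives the exact fibre cardinality and, if pushed, an equality $R(\phi,\psi)=\sum_i [N:\theta_i(H_i)]$ rather than just an upper bound — structure that the paper later recovers separately in Theorem~\ref{rationality_of_rzf} by a different argument about when distinct $g_in_j$ represent distinct classes. The paper's version is shorter for the inequality alone. Both uses of centrality ($\theta$ multiplicative, $\theta(N)\subseteq\theta(H)$, sweeping the fibre into one coset $gN$) match where the paper uses it (moving $n,m,n_j,\psi(m),\phi(m)$ past $\phi(h)^{-1}$), so the hypothesis $N\subseteq Z(G)$ is deployed in an equivalent way.
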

\begin{proof} Let
$\reidclasset{\phi|_N}{\psi|_N}{n}{}$ and \\
$\reidclasset{\phi'}{\psi'}{g}{N}$
be the $(\phi|_N,\psi|_N)$ Reidemeister classes and $(\phi',\psi')$ Reidemeister classes respectively.
Let us take $g\in G$. Then $gN\in [g_iN]_{{\phi'},{\psi'}}$ for some $i$, so there exists $hN\in G/N$ such that
$$gN = \psi'(hN)g_iN\phi'(hN)^{-1} = \psi'(h)g_i\phi'(h)^{-1}N.$$
It follows that there exists $n\in N$ such that
$g = \psi'(h)g_i\phi'(h)^{-1}n$.
In turn $n\in [n_j]_{\phi|_N,\psi|_N}$ for some $j$, hence there exists
 $m\in N$ such that
$n = \psi|_N(m)n_j\phi|_N(m)^{-1}$. 
Since $n,m,n_j,\psi|_N(m), \phi|_N(m)\in N\subset Z(G)$, 
it follows that
$$ g = [\psi(hm)](g_in_j)[\phi(hm)^{-1}], $$
i.e $g\in [g_in_j]_{\phi,\psi}$. Since this is true for arbitrary $g\in G$ we obtain that $$R(\phi,\psi)\leqslant R(\phi|_N,\psi|_N)R(\phi',\psi').$$

\end{proof}

\begin{teo}\label{rationality_of_rzf}

Let $N$ be a finitely generated, torsion-free, nilpotent group and
$$\adaptedlcs$$
be an adapted lower central series of $N$. Suppose that 
$ R(\phi,\psi)< \infty $ and
$\rnum{k}<\infty$ for a pair $\phi,\psi$ of endomorphisms of $N$ and for every
pair $\phi_k,\psi_k$, of induced  endomorphisms on the finitely generated torsion-free abelian factors 
$$
\sqrt[N]{\gamma_k(N)}/\sqrt[N]{\gamma_{k+1}(N)}\cong \mathbb{Z}^{d_ k},~ d_ k \in \mathbb{N} ,~ 1\leq k\leq c,
$$
then
$$
\rnum{}=\prod_{k=1}^{c}\rnum{k}.
$$ 

\end{teo}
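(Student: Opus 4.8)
The plan is to induct on the length $c$ of the adapted lower central series, using the central extension
$$
1 \longrightarrow \sqrt[N]{\gamma_c(N)} \longrightarrow N \longrightarrow N/\sqrt[N]{\gamma_c(N)} \longrightarrow 1,
$$
in which the kernel $M := \sqrt[N]{\gamma_c(N)} \cong \mathbb{Z}^{d_c}$ is central in $N$ (this uses parts (i) and (iii) of the isolator lemma, which force $[N,M] \le \sqrt[N]{\gamma_{c+1}(N)} = 1$), and the quotient $N/M$ is again finitely generated, torsion-free, nilpotent with adapted lower central series of length $c-1$, whose factors are exactly the $\sqrt[N]{\gamma_k(N)}/\sqrt[N]{\gamma_{k+1}(N)}$ for $1 \le k \le c-1$ by part (iv). Write $\phi|_M,\psi|_M$ for the restrictions (equal to $\phi_c,\psi_c$) and $\phi',\psi'$ for the induced endomorphisms on $N/M$. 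The inductive hypothesis applied to $N/M$ gives $R(\phi',\psi') = \prod_{k=1}^{c-1} \rnum{k}$, so it suffices to prove the single exact-sequence identity
$$
R(\phi,\psi) = R(\phi|_M,\psi|_M)\, R(\phi',\psi').
$$

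The inequality "$\le$" is precisely Lemma \ref{ineq}, since $M \subseteq Z(N)$ and all Reidemeister numbers in sight are finite by hypothesis. For the reverse inequality I would analyze the exact sequence of pointed sets
\begin{center}
\begin{tikzcd}
\rcoincl{|_M} \arrow[r,"\hat{i}"] & \rcoincl{} \arrow[r,"\hat{p}"] & \rcoincl{'} \arrow[r] & 0
\end{tikzcd}
\end{center}
recorded just before Lemma \ref{ineq}: $\hat p$ is surjective, so the classes of $\rcoincl{}$ are partitioned into the $R(\phi',\psi')$ fibres $\hat p^{-1}([g'])$, and it remains to show that each fibre has exactly $R(\phi|_M,\psi|_M)$ elements — equivalently, that the "twisted" action of $M$ on the fibre over a class $[g]_{\phi,\psi}$ (translating a representative $g$ by $\psi(m) g \phi(m)^{-1}$ with $m \in M$, which stays in the same $\hat p$-fibre because $M$ is normal) has orbit space in natural bijection with $\rcoincl{|_M}$. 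Because $M$ is central, conjugation by elements of $N$ acts trivially on $M$, and one checks that the stabiliser data is independent of the chosen fibre: the map $[m]_{\phi|_M,\psi|_M} \mapsto [g n_{m}]_{\phi,\psi}$ (where $n_m$ is a representative of $[m]_{\phi|_M,\psi|_M}$ inside $M$) is well defined and surjective onto the fibre by the computation in Lemma \ref{ineq}, and injectivity follows because if $\psi(h) g n_m \phi(h)^{-1} = g n_{m'}$ for some $h \in N$, projecting to $N/M$ forces $h \in M$ (using condition (2)-type injectivity is not needed here — one uses that $hM$ lies in the stabiliser of the class $[g']_{\phi',\psi'}$, and since that class is a single point in the relevant fibre, $h$ can be taken in $M$), whence $m \sim_{\phi|_M,\psi|_M} m'$. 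This pins down $|\hat p^{-1}([g'])| = R(\phi|_M,\psi|_M)$ uniformly, giving $R(\phi,\psi) = R(\phi|_M,\psi|_M) R(\phi',\psi')$.

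The main obstacle is the injectivity step, i.e. verifying that distinct $(\phi|_M,\psi|_M)$-classes in $M$ really do produce distinct $(\phi,\psi)$-classes within a single fibre. The subtlety is that an element $h \in N$ conjugating two candidate representatives need not a priori lie in $M$; one must exploit that its image $hM$ fixes the relevant point of $\rcoincl{'}$ and then use centrality of $M$ to replace $h$ by an element of $M$ without changing the twisted-conjugation effect. Centrality is doing real work throughout — it is what makes the translation action of $M$ on each fibre a genuine group action with a well-behaved quotient, and what makes the count of the fibre independent of the base class. Once these two points (uniform fibre size via centrality, and injectivity via lifting the stabiliser) are in place, the theorem follows by assembling the telescoping product $\prod_{k=1}^c \rnum{k}$ through the induction.
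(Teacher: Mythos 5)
Your framework—induction on the length of the adapted lower central series, using Lemma~\ref{ineq} for the upper bound and then counting classes inside each fibre of $\hat p$—matches the paper's strategy, and your handling of the "$\le$" direction and the reduction to the single short exact sequence are fine. The problem is the injectivity step, which you yourself flag as the main obstacle and then do not actually close.

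Concretely, from $\psi(h)\,g\,n_m\,\phi(h)^{-1} = g\,n_{m'}$ you project to $N/M$ and obtain $\psi'(hM)(gM)\phi'(hM)^{-1} = gM$, and you assert that this lets you "take $h$ in $M$." It does not. That relation says only that the coset $hM$ lies in the \emph{stabiliser} of the element $gM$ under the $(\phi',\psi')$-twisted-conjugation action of $N/M$ on itself; this stabiliser is a subgroup of $N/M$ that is in general far from trivial (for $\phi'=\psi'=\mathrm{id}$ it is the full centraliser of $gM$, for instance). Nothing in "$[g']$ is a single point of the fibre" removes the possibility of a nontrivial $h'$ fixing $g'$, so you have not ruled out $h \notin M$, and the map $[m]_{\phi|_M,\psi|_M} \mapsto [g n_m]_{\phi,\psi}$ could a priori fail to be injective.

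The paper's proof does genuine extra work precisely here and, crucially, uses the full hypothesis $R(\phi_k,\psi_k)<\infty$ for \emph{every} $k$, not just $k=c$. It does not try to force $h\in N_c$. Instead, assuming $h\notin N_c$, it lets $N_k$ be the smallest term of the adapted lower central series containing $h$, rearranges the twisted-conjugacy relation to
$c_i c_a^{-1} = g_j^{-1}\psi(h)g_j\phi(h)^{-1} = [g_j,\psi(h)^{-1}]\,\psi(h)\phi(h)^{-1}$,
and reduces modulo $N_{k+1}$: both $c_i c_a^{-1}\in N_c\subseteq N_{k+1}$ and $[g_j,\psi(h)^{-1}]\in N_{k+1}$, so one obtains $\phi_k(hN_{k+1})=\psi_k(hN_{k+1})$ with $hN_{k+1}\neq 1$. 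Thus $\phi_k-\psi_k$ has a nontrivial kernel on $\mathbb{Z}^{d_k}$, hence infinite cokernel, contradicting $R(\phi_k,\psi_k)<\infty$. Only after this contradiction is $h\in N_c$ forced, and then centrality gives $[c_i]=[c_a]$. Your proposal omits this mechanism and in fact never invokes the finiteness of $R(\phi_k,\psi_k)$ for intermediate $k$ at all, which is a sign that the injectivity argument cannot be correct as written.
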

\begin{proof}
 We will prove the product 
formula for the coincidence Reidemeister numbers by induction on the length of an adapted lower central series. Let us denote $\isol k$ as $N_k$. If $c = 1$,
the result follows trivially. Let $c > 1$ and assume the product formula holds for a central series of length $c - 1$. Let $\phi,\psi \in End(N)$, then $\phi(N_c)\subseteq N_c, \psi(N_c)\subseteq N_c$ and hence we have
the following commutative diagram of short exact sequences: \\

\begin{center}
\commdiag{N_c}{\phi_c\mbox{,}\psi_c}{N}{\phi\mbox{,}\psi}{N/N_c}{\phi'\mbox{,}\psi'},
\end{center}
$ $\\
where $\phi_c,\psi_c$ are induced endomorphisms on the $N_c$.
The quotient $\fctgp{}{c}$ is a finitely generated, nilpotent group with a central series
$$\centralquot$$
of length $c-1$.

Every factor of this series is of the form 
$$(\fctgp{k}{c})/(\fctgp{k+1}{c})\cong \fctgp{k}{k+1}$$
 by the third isomorphism theorem, hence it is
also torsion-free. Moreover, because of this natural isomorphism we know that
for every induced pair of endomorphisms $(\phi'_k,\psi'_k)$ on $(\fctgp{k}{c})/(\fctgp{k+1}{c})$ it is true that
$\rprim{k}=\rnum{k}.$

The assumptions of the theorem imply, that $\rprim{}<\infty$ and 
that $\rnum{c}<\infty$.

Moreover,    let $\primcl{1},...,\primcl{n}$
 be the $(\phi',\psi')$ -- Reidemeister classes and 
 $\cclass{1},...,\cclass{m}$ - the $(\phi'_c,\psi'_c)$-Reidemeister classes. Since $M_c\subseteq Z(N)$, by Lemma \ref{ineq} we obtain that
$\rnum{}\leqslant \rnum{c}\rprim{}.$ 

To prove the opposite inequality it suffices to prove that every Reidemeister class $\rclass$ represents a different
$(\phi,\psi)$-Reidemeister class. Then we obtain
$$\rnum{} = \rnum{c}\rprim{}$$
and then the theorem follows from the induction hypothesis.

Suppose, that there exists some $h\in N$ such that
$ c_ig_j = \psi(h)c_ag_b\phi(h)^{-1}.$

Then by taking the projection to $\fctgp{}{c}$ we find that
$$ g_jN_c = p(c_ig_j) = p(\psi(h)c_ag_b\phi(h)^{-1}) = \psi'(hN_c)(g_bN_c)\phi'(hN_c)^{-1}.$$

Hence $\primcl{j}=\primcl{b}$. Assume that 
$c_ig_j = \psi(h)c_ag_j\phi(h)^{-1}.$
If $h\in\ N_c\subseteq Z(N)$, then 
$c_ig_j = \psi(h)c_a\phi(h)^{-1}g_j$
and consequently $\cclass{i} = \cclass{a}$, so let us assume that $h \notin N_c$ and that $N_k$ is the smallest group in the central series which contains $h$. Then
$$c_ig_j = \psi(h)c_ag_j\phi(h)^{-1} \Leftrightarrow$$
$g_jc_i = \psi(h)c_ag_j\phi(h)^{-1} \Leftrightarrow$
$$c_i = g_j^{-1}\psi(h)c_ag_j\phi(h)^{-1} \Leftrightarrow$$
$c_i = g_j^{-1}\psi(h)g_j\phi(h)^{-1}c_a \Leftrightarrow$
$$c_ic_a^{-1} = g_j^{-1}\psi(h)g_j\phi(h)^{-1}$$
and therefore
\begin{equation*}
\begin{split}
c_ic_a^{-1}N_{k+1}  & = g_j^{-1}\psi(h)g_j\phi(h)^{-1}N_{k+1} \\
& = [g_j,\psi(h)^{-1}](\psi(h)\phi(h)^{-1})N_{k+1}
\end{split}
\end{equation*}
As $c_ic_a^{-1}\in N_c \subseteq N_{k+1}$ and  $[g_j,\psi(h)^{-1}]\in N_{k+1}$ , we find that
$$(\phi_k)(hN_{k+1}) = (\psi_k)(hN_{k+1}).$$
That means that the set of coincidence points Coin$(\phi'_k,\psi'_k)\neq \{1\}$, which implies that $\rprim{}{}=\infty$ and this contradicts assumption.
\end{proof}

\begin{teo}
 Let
  $\phi, \psi \colon N \to N$ be a tame pair of endomorphisms of a
 finitely generated  torsion-free nilpotent group~$N$.  Let $c$
  denote the nilpotency class of~$N$ and, for $1 \le k \le c$, let
  $\phi_k ,\psi_k \colon G_k \to G_k$, $1\leq k\leq c$, denote the tame pairs of  induced
  endomorphisms of the finitely generated torsion-free abelian factor groups
  $$G_k=N_k/N_{k+1}=\sqrt[N]{\gamma_k(N)}/\sqrt[N]{\gamma_{k+1}(N)}\cong \mathbb{Z}^{d_ k},$$ for some $d_ k \in \mathbb{N} $,  that arise from an adapted lower central
  series of~$N$.  Then the following hold.
  
 \textup{(1)}  For each $n \in \mathbb{N}$,
  \[
    R(\phi^n, \psi^n) = \prod_{k=1}^c R(\phi_k^{\, n},
    \psi_k^{\, n}) \qquad \text{for $n \in \mathbb{N}$.}
  \]
  
   \textup{(2)} For $1 \le k \le c$, let
  \[
    \phi_{k,\mathbb{Q}}, \psi_{k,\mathbb{Q}} \colon
    G_{k,\mathbb{Q}} \to
    G_{k,\mathbb{Q}}
  \]
  denote the extensions of $\phi_k, \psi_k$ to the divisible hull
  $G_{k,\mathbb{Q}} = \mathbb{Q} \otimes_\mathbb{Z} G_k \cong
  \mathbb{Q}^{d_k}$ of~$G_k$.  Suppose that each pair of endomorphisms
  $\phi_{k,\mathbb{Q}}, \psi_{k,\mathbb{Q}}$ is simultaneously
  triangularisable.  Let $\xi_{k,1}, \ldots, \xi_{k,d_k}$ and
  $\eta_{k,1}, \ldots, \eta_{k,d_k}$ be the eigenvalues of
  $\phi_{k,\mathbb{Q}}$ and $\psi_{k,\mathbb{Q}}$ in  the field~$\mathbb{C}$, including
  multiplicities, ordered so that, for $n \in \mathbb{N}$, the
  eigenvalues of
  $\phi_{k,\mathbb{Q}}^{\, n} - \psi_{k,\mathbb{Q}}^{\, n}$ are
  $\xi_{k,1}^{\, n} - \eta_{k,1}^{\, n}, \ldots, \xi_{k,d_k}^{\, n} -
  \eta_{k,d_k}^{\,n}$. 
Then   for each $n \in \mathbb{N}$,
  \begin{equation} \label{equ:key-formula-2}
    R(\phi_k^{\, n},\psi_k^{\, n}) = \prod_{i=1}^{d_k }\lvert \xi_{k,i}^{\, n} -
    \eta_{k,i}^{\, n} \rvert ;
    \end{equation}
     \smallskip
  \textup{(3)}
  Moreover, suppose that $\lvert \xi_{k,i} \rvert \neq \lvert \eta_{k,i}
  \rvert$ for $1 \le k \le c$, $1 \le i \le d_k$.
If $\phi,\psi$ is a tame pair of endomorphisms of $N$ and $\phi_{k,\mathbb{Q}}, \psi_{k,\mathbb{Q}}$, $1 \le k \le c$, are simultaneously
  triangularisable  pairs of endomorphisms of $G_{k,\mathbb{Q}}$,
then the coincidence Reidemeister zeta function   $R_{\phi,\psi}(z)$ is a rational function.
\end{teo}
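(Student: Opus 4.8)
The plan is to deduce \textup{(1)} from the product formula of Theorem~\ref{rationality_of_rzf}, to deduce \textup{(2)} from the abelian computation of Example~\ref{ex1}, and to combine \textup{(1)} and \textup{(2)} in \textup{(3)} after an analysis of signs. For \textup{(1)}, I would fix $n\in\mathbb{N}$ and apply Theorem~\ref{rationality_of_rzf} to the tame pair $(\phi^{\,n},\psi^{\,n})$, using the same adapted lower central series of $N$. Since restricting to a fully characteristic subgroup and passing to a quotient are functorial, the endomorphisms induced by $\phi^{\,n}$ and $\psi^{\,n}$ on $G_k=N_k/N_{k+1}$ are precisely $\phi_k^{\,n}$ and $\psi_k^{\,n}$; and the hypotheses of Theorem~\ref{rationality_of_rzf} hold because $R(\phi^{\,n},\psi^{\,n})<\infty$ (as $(\phi,\psi)$ is tame) and $R(\phi_k^{\,n},\psi_k^{\,n})<\infty$ (as each $(\phi_k,\psi_k)$ is tame). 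This yields $R(\phi^{\,n},\psi^{\,n})=\prod_{k=1}^{c}R(\phi_k^{\,n},\psi_k^{\,n})$.

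For \textup{(2)}, I would use that $G_k\cong\mathbb{Z}^{d_k}$ is abelian, so, as in Example~\ref{ex1}, $R(\phi_k^{\,n},\psi_k^{\,n})=\#\coker(\phi_k^{\,n}-\psi_k^{\,n})$; tameness forces this to be finite, i.e.\ $\det(\phi_k^{\,n}-\psi_k^{\,n})\neq 0$, so $R(\phi_k^{\,n},\psi_k^{\,n})=\lvert\det(\phi_k^{\,n}-\psi_k^{\,n})\rvert$, a number unchanged by extending scalars to $\mathbb{Q}$. Choosing a common basis in which $\phi_{k,\mathbb{Q}}$ and $\psi_{k,\mathbb{Q}}$ are upper triangular, the matrix $\phi_{k,\mathbb{Q}}^{\,n}-\psi_{k,\mathbb{Q}}^{\,n}$ is upper triangular with diagonal entries $\xi_{k,i}^{\,n}-\eta_{k,i}^{\,n}$, whence $\det(\phi_{k,\mathbb{Q}}^{\,n}-\psi_{k,\mathbb{Q}}^{\,n})=\prod_{i=1}^{d_k}(\xi_{k,i}^{\,n}-\eta_{k,i}^{\,n})$ and formula~\eqref{equ:key-formula-2} follows.

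For \textup{(3)}, combining \textup{(1)} and \textup{(2)} gives $R(\phi^{\,n},\psi^{\,n})=\lvert\Pi_n\rvert$, where $\Pi_n:=\prod_{k=1}^{c}\prod_{i=1}^{d_k}(\xi_{k,i}^{\,n}-\eta_{k,i}^{\,n})=\prod_{k=1}^{c}\det(\phi_k^{\,n}-\psi_k^{\,n})\in\mathbb{Z}$. Expanding the products, $\Pi_n=\sum_j m_j\gamma_j^{\,n}$ for finitely many integers $m_j$ and finitely many algebraic numbers $\gamma_j$, each a product of some of the $\xi_{k,i}$ and $\eta_{k,i}$. The remaining point is that $\operatorname{sign}(\Pi_n)=c_0\,s^{\,n}$ for fixed $c_0,s\in\{1,-1\}$, for then $R(\phi^{\,n},\psi^{\,n})=c_0\sum_j m_j(s\gamma_j)^{\,n}$ is a $\mathbb{Z}$-linear combination of $n$-th powers of a fixed finite set of numbers, and hence $R_{\phi,\psi}(z)=\exp\bigl(\sum_{n\ge1}\tfrac{R(\phi^{\,n},\psi^{\,n})}{n}z^{n}\bigr)=\prod_j(1-s\gamma_j z)^{-c_0 m_j}$ is a rational function. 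To get the sign statement I would choose the common triangularisations compatibly with complex conjugation --- peeling off from the complexification of $G_{k,\mathbb{Q}}$, at each stage, either a conjugation-stable common eigenline or the span of a non-real common eigenline together with its complex conjugate --- so that each multiset $\{(\xi_{k,i},\eta_{k,i})\}_i$ is invariant under $(\xi,\eta)\mapsto(\bar\xi,\bar\eta)$. A genuine conjugate pair $\{(\xi,\eta),(\bar\xi,\bar\eta)\}$ then contributes to $\Pi_n$ the factor $(\xi^{\,n}-\eta^{\,n})(\bar\xi^{\,n}-\bar\eta^{\,n})=\lvert\xi^{\,n}-\eta^{\,n}\rvert^{2}>0$, while a self-conjugate pair has $\xi,\eta\in\mathbb{R}$ with $\lvert\xi\rvert\neq\lvert\eta\rvert$, so, with $\mu$ the one of larger modulus and $\nu$ the other, $\lvert\mu^{\,n}\rvert>\lvert\nu^{\,n}\rvert$ forces $\operatorname{sign}(\xi^{\,n}-\eta^{\,n})=\pm\operatorname{sign}(\mu)^{\,n}$ with a sign independent of $n$; multiplying all contributions gives $\operatorname{sign}(\Pi_n)=c_0\,s^{\,n}$.

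I expect the main obstacle to be precisely this sign analysis: arranging the simultaneous triangularisations so that the eigenvalue pairing is stable under complex conjugation, and then checking --- for every $n\in\mathbb{N}$, not merely asymptotically --- that $\operatorname{sign}(\Pi_n)$ depends on $n$ only through a single multiplicative sign $s^{\,n}$. The hypothesis $\lvert\xi_{k,i}\rvert\neq\lvert\eta_{k,i}\rvert$ enters exactly here and nowhere else; without it $\lvert\xi_{k,i}^{\,n}-\eta_{k,i}^{\,n}\rvert$ need not be multiplicatively well-behaved and one expects a natural boundary instead, in line with the dichotomy of the earlier sections.
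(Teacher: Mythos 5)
Your proof takes essentially the same route as the paper: part (1) via the product formula for the adapted lower central series, part (2) via Smith normal form and simultaneous triangularisation, and part (3) by pairing conjugate eigenvalues and handling each real pair using $\lvert \xi \rvert \neq \lvert \eta \rvert$. The only cosmetic difference is in (3): you track the sign of the whole product $\Pi_n$ globally as $c_0\,s^n$, whereas the paper absorbs the sign locally into each real factor via the identity $\lvert \xi^n - \eta^n \rvert = \delta_1^n - \delta_2^n$ inherited from Example~\ref{ex1} (with $\delta_1=\max\{\lvert\xi\rvert,\lvert\eta\rvert\}$, $\delta_2=\xi\eta/\delta_1$) --- the same calculation, packaged differently.
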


\begin{proof}

The coincidence Reidemeister number $R(\phi, \psi)$ of  automorphisms $\phi,\psi $ of  an Abelian group $G$  coincides with the cardinality of the  quotient group $ \coker(\phi-\psi)=G/{\rm Im}(\phi-\psi)$
(or $\coker(\psi -\phi)=G/{\rm Im}(\psi-\phi)$). 

For $1 \le k \le c$, let tame pairs $\phi_k ,\psi_k \colon G_k \to G_k$ of  induced endomorphisms of the finitely generated torsion-free abelian factor groups $G_k=N_k/N_{k+1}\cong \mathbb{Z}^{d_ k},$  are represented by integer matrices $A_k,B_k\in M_{d_k}(\mathbb{Z})$ associated to them respectively. There is a diagonal integer matrix
$ C_k = \diag(c_1,. . . , c_{d_ k})$ such that $C_k = M_k(A_k - B_k)N_k$; where $M_k$ and $N_k$
are unimodular matrices. Now we have
$ \det C_k= \det(A_k - B_k)$  and the order of
the cokernel of $\phi_k -\psi_k $ is the order of the group $\mathbb{Z}/c_1\mathbb{Z}\oplus  . . . . .     \oplus \mathbb{Z}/c_{d_k}\mathbb{Z} .$
Thus the order of the cokernel of $ \phi_k -\psi_k $
is $|\coker(\phi_k -\psi_k )|= |c_1 \cdot \cdot \cdot  c_{d_ k}|= |\det C_k| = |\det(\phi_k -\psi_k )| .$

Then   for each $n \in \mathbb{N}$ and $1 \le k \le c$,
 $
    R(\phi_k^{\, n},\psi_k^{\, n}) =|\coker(\phi_k -\psi_k )|= |\det(\phi_k -\psi_k )|=|\det(\phi_{k,\mathbb{Q}}- \psi_{k,\mathbb{Q}})| =\prod_{i=1}^{d_k }\lvert \xi_{k,i}^{\, n} -
    \eta_{k,i}^{\, n} \rvert .
  $

Now we will prove the rationality of $R_{\phi,\psi}(z)$.
We open up the absolute values in the product
  $R(\phi_k^{\, n},\psi_k^{\, n}) = \prod_{i=1}^{d_k }\lvert \xi_{k,i}^{\, n} -
    \eta_{k,i}^{\, n} \rvert ,$ $1 \le k \le c$. Complex eigenvalues $\xi_{k,i}$ in the spectrum of
  $\phi_{k,\mathbb{Q}}$, respectively $\eta_{k,i}$ in the spectrum of
  $\psi_{k,\mathbb{Q}}$, appear in pairs with their complex conjugate
  $\overline{\xi_{k,i}}$, respectively $\overline{\eta_{k,i}}$.

  Moreover, such pairs can be lined up with one another in a
  simultaneous triangularisation as follows. Write
    $\phi_{k,\mathbb{C}}, \psi_{k,\mathbb{C}}$ for the induced endomorphisms of the
    $\mathbb{C}$-vector space
    $V = \mathbb{C} \otimes_\mathbb{Q} G \cong \mathbb{C}^{d_k} $.  If $v \in V$ is, at the same time, an eigenvector
    of $\phi_{k,\mathbb{C}}$ with complex eigenvalue $\xi_{k,d_k}$ and an eigenvector
    of $\psi_{k,\mathbb{C}}$ with eigenvalue $\eta_{k,d_k}$, then there is $w \in V$ such
    that $w$ is, at the same time, an eigenvector of $\phi_{k,\mathbb{C}}$ with
    eigenvalue $\overline{\xi_{k,d_k}} \ne \xi_{k,d_k}$ and an eigenvector of
    $\psi_{k,\mathbb{C}}$ with eigenvalue $\overline{\eta_{k,d_k}}$, possibly equal
    to~$\eta_{k,d_k}$.  Thus we can start our complete flag of
    $\{\phi,\psi\}$-invariant subspaces of $V$ with
    $\{0\} \subset \langle v \rangle \subset \langle v,w \rangle$, and
    proceed with $V/\langle v,w \rangle$ by induction to produce the
    rest of the flag in the same way, treating complex eigenvalues of
    $\psi_{k,\mathbb{C}}$ in the same way as they appear.
     If at least one of $\xi_{k,i}, \eta_{k,i}$ is complex so
    that these eigenvalues of $\phi_\mathbb{Q}$ and
    $\psi_\mathbb{Q}$ are paired with eigenvalues
    $\xi_{k,j} = \overline{\xi_{k,i}}, \eta_{k,j} = \overline{\eta_{k,i}}$, for
    suitable $j \ne i$, as discussed above, we see that
    \[
      \big\lvert \xi_{k,i}^{\, n} - \eta_{k,i}^{\, n} \big\rvert \;
      \big\lvert \xi_{k,j}^{\, n} - \eta_{k,j}^{\, n} \big\rvert =
      \big\lvert \xi_{k,i}^{\, n} - \eta_{k,i}^{\, n} \big\rvert ^{\,
        2} = \big( \xi_{k,i}^{\, n} - \eta_{k,i}^{\, n} \big) \cdot
      (\overline{\xi_{k,i}}^{\, n} - \overline{\eta_{k,i}}^{\, n} \big).
    \]
    If $\xi_{k,i}$ and $ \eta_{k,i}$ are both real eigenvalues of
    $\phi_\mathbb{Q}$ and $\psi_\mathbb{Q}$, not paired up with
    another pair of eigenvalues, then   exactly as in 
    Example \ref{ex1} above we have
    $\lvert \xi_{k,i}^{\, n} - \eta_{k,i}^{\, n} \rvert=
    \delta_{1,k,i}^{\, n} - \delta_{2,k,i}^{\, n}$,
  where  
    $\delta_{1,k,i} = \max\{\lvert \xi_{k,i} \rvert,\lvert \eta_{k,i}
    \rvert\} $ and
    $\delta_{2,k,i}=\frac{\xi_{k,i}\cdot\eta_{k,i}}{\delta_{1,k,i}}$.
  Hence we can expand each product $ R(\phi_k^{\, n},\psi_k^{\, n})$, $1 \le k \le c$ using an appropriate
  symmetric polynomial, to obtain for the Reidemeister numbers
 $R(\phi^n,\psi^n)$ an expression of the form
  \begin{equation}\label{dominant1}
      R(\phi^n, \psi^n) = \prod_{k=1}^c R(\phi_k^{\, n},
    \psi_k^{\, n}) =\prod_{k=1}^c \prod_{i=1}^{d_k }\lvert \xi_{k,i}^{\, n} -
    \eta_{k,i}^{\, n} \rvert = \sum_{j \in J} c_jw_j^{\, n},
  \end{equation}
  where~$J$ is a finite index set, $c_j \in \{-1,1\}$ and
  $\{ w_j \mid j \in J \} \subseteq \mathbb{C} \smallsetminus \{0\}$.
  Consequently, the coincidence Reidemeister zeta function can be
  written as
  \[
    R_{\phi,\psi}(z) =\exp\left(\sum_{n=1}^\infty \frac{R(\phi^n,\psi^n)}{n}z^n\right)= \exp \left( \sum_{j \in J} c_j \sum_{n=
        1}^{\infty} \frac{ (w_jz)^n} {n} \right).
  \]
  and it follows immediately that
  $R_{\phi,\psi}(z) = \prod_{j \in J} (1 - w_jz)^{-c_j}$ is a
  rational function.

\end{proof}


\begin{thebibliography}{99}


\bibitem{BMW} 
J.~Bell, R.~Miles, T.~Ward, Towards a P{\'o}lya--Carlson dichotomy
for algebraic dynamics, \emph{Indag. Math.(N.S.} \textbf{25} (2014), no.~4, 652-668.

\bibitem{ByCo18} J.~Byszewski and G.~Cornelissen, Dynamics on
    abelian varieties in positive characteristic, with an appendix by
  R.~Royals and T.~Ward,  Algebra Number Theory \textbf{12} (2018), 
  2185--2235.

\bibitem{Car}
F.~Carlson,  `\"{U}ber ganzwertige {F}unktionen', \emph{Math. Z.} \textbf{11}
  (1921), no.~1-2, 1--23.


\bibitem{CEW}
V.~Chothi, G.~Everest, and T.~Ward,  {$S$}-integer dynamical systems: periodic
  points, \emph{J. Reine Angew. Math.} \textbf{489} (1997), 99--132.

\bibitem{dek}
	Dekimpe K.
	Almost-Bieberbach groups: affine and polynomial structures.
	Vol. 1639. Lecture Notes in Mathematics. Springer-Verlag,
Berlin, 1996, pp. x+259

\bibitem{DeDu}
Dekimpe, K. and Dugardein, G.-J.
 Nielsen zeta functions for maps on infra-nilmanifolds are
  rational,
J. Fixed Point Theory Appl., {\bf 17}(2)(2015),  355--370.

\bibitem{DekTerBus}
 Karel Dekimpe, Sam Tertooy, and Iris Van~den Bussche,
 Reidemeister zeta functions of low-dimensional
  almost-crystallographic groups are rational,
 Communications in Algebra, {\bf 46} (9)(2018), 4090--4103.





\bibitem{Eisenbud}
D.~Eisenbud, Commutative algebra, volume 150 of Graduate Texts in Mathematics,
Springer - Verlag, New York, 1995, With a view toward algebraic geometry.

\bibitem{ESW}
G.~Everest, V.~Stangoe, and T.~Ward,  `Orbit counting with an isometric
  direction', in Algebraic and topological dynamics, in Contemp.
  Math. \textbf{385} (2005), pp.~293--302, Amer. Math. Soc., Providence, RI.

\bibitem{EPSW}
G.~Everest, A.~van~der Poorten, I.~Shparlinski, and T.~Ward, Recurrence
  sequences, in \emph{Mathematical Surveys and Monographs} \textbf{104}, Amer. Math. Soc., Providence, RI, 2003.

\bibitem{Fel91}
A.~L.~Fel'shtyn,
The Reidemeister zeta function and the computation of the Nielsen zeta function,
Colloq. Math., {\bf 62}, (1991), 153--166.

\bibitem{Fel00}
A.~Fel'shtyn,
Dynamical zeta functions, Nielsen theory and Reidemeister torsion,
Mem. Amer. Math. Soc., {\bf 699}, Amer. Math. Soc., Providence, R.I. 2000.
\bibitem{fh}
A. L. Fel'shtyn and R. Hill,
 The Reidemeister zeta function with applications to Nielsen theory and a connection with Reidemeister torsion,
 K-theory, {\bf 8} (1994), 367--393.
\bibitem{fhw}
A. L. Fel'shtyn, R. Hill and P. Wong,
Reidemeister numbers of equivariant maps,
Topology Appl., {\bf 67} (1995), 119--131
\bibitem{FK}
Alexander Fel'shtyn and Benjamin Klopsch,
 P\'olya--{C}arlson dichotomy for coincidence {R}eidemeister zeta
  functions via profinite completions.
 e-print, 2021,
arXiv:2102.10900(to appear in Indagationes Mathematicae, 2022).

\bibitem{FelLee}
 Alexander Fel'shtyn and Jong~Bum Lee, 
The Nielsen and Reidemeister numbers of maps on
  infra-solvmanifolds of type ({R}).
Topology Appl. {\bf 181}(2015), 62--103.

\bibitem{FeTr21}
 A.~Fel'shtyn, E.~Troitsky.
 P\'olya--Carlson dichotomy for dynamical zeta functions
and twisted Burnside-Frobenius theorem.
 Russ. J. Math. Phys. {\bf 28}(2021),  No.~4, 455--463.

\bibitem{FeTrZi20}
 A.~Fel'shtyn, E.~Troitsky, and M.~Zietek.
 New {Z}eta {F}unctions of {R}eidemeister {T}ype and the {T}wisted
  {B}urnside-{F}robenius {T}heory.
 Russ. J. Math. Phys. {\bf 27}(2020),  No.~2, 199--211.

\bibitem{FeZi20} A.~Fel'shtyn and M.~Zietek, Dynamical zeta
    functions of Reidemeister type and representations spaces,
  57--81, in: Contemp.\ Math.\ {\bf 744}(2020), 57--81, Amer.\ Math.\ Soc.,
  Providence, R.I.

\bibitem{FZ2} A.~Fel'shtyn and M.~Zietek, Dynamical zeta
    functions of Reidemeister type,
  Topological Methods Nonlinear Anal. {\bf 56}(2020), 433-455.



\bibitem{dg}
  Daciberg L. Gon\c calves, Peter N.-S. Wong: 
  Homogenous spaces in coincidence theory.
  Forum Math. {\bf17} (2005), 297--313


\bibitem{j}
B. Jiang,
Nielsen Fixed Point Theory,
 Contemp. Math. {\bf14}, Birkh\"auser, 1983.
 

\bibitem{Li}
L.~ Li,  On the rationality of the Nielsen zeta function, Adv. in Math. (China), {\bf23} (1994)
no.~3, 251--256.

\bibitem{LW}
D.~A. Lind and T.~Ward,  Automorphisms of solenoids and {$p$}-adic entropy,
  Ergodic Theory Dynam. Systems \textbf{8} (1988), no.~3, 411--419.
\bibitem{mal}
 A.~Mal'cev, On a class of homogeneous spaces.  Izvestiya
Akademii Nauk SSSR. Seriya Matemati\v ceskaya, \textbf{13 } (1949), 9-32.


\bibitem{Matsumura}
H.~Matsumura, Commutative ring theory, volume 8 of Cambridge Studies in Advanced Mathematics.
Cambridge University Press, Cambridge, second edition, 1989. Translated from the  Japanese
by M. Reid.
\bibitem{Mi07}
R.~Miles,  Zeta functions for elements of entropy rank-one actions,
  Ergodic Theory Dynam. Systems \textbf{27} (2007), no.~2, 567--582.

\bibitem{Mi}
R.~Miles,  Periodic points of endomorphisms on solenoids and related groups,
  Bull. Lond. Math. Soc. \textbf{40} (2008), no.~4, 696--704.
  
\bibitem{Mi13} R.~Miles, Synchronization points and
    associated dynamical invariants, Trans.\ Amer.\ Math.\ Soc.\
  \textbf{365} (2013), 5503--5524.

\bibitem{MW}
R.~Miles and T.~Ward,  The dynamical zeta function for commuting automorphisms of zero-dimensional groups,
  Ergodic Theory Dynam. Systems \textbf{38} (2018), no.~4, 1564-- 1587.

\bibitem{MyPoo}
G.~Myerson and A.~J. van~der Poorten,  Some problems concerning recurrence
  sequences, Amer. Math. Monthly \textbf{102} (1995), no.~8, 698--705.


\bibitem{Po}
G.~P{\'o}lya,  `\"{U}ber gewisse notwendige {D}eterminantenkriterien f\"ur die
  {F}ortsetzbarkeit einer {P}otenzreihe', \emph{Math. Ann.} \textbf{99} (1928),
  no.~1, 687--706.
\bibitem{Sch}
K.~Schmidt, Dynamical systems of algebraic origin, in Progress in
  Mathematics \textbf{128}, Birkh\"auser Verlag, Basel, 1995.
\bibitem{Segal}
S.~L. Segal.
 Nine introductions in complex analysis, volume \textbf{208} of 
  North-Holland Mathematics Studies.
Elsevier Science B.V., Amsterdam, revised edition, 2008.


\bibitem{Smale}
S. Smale,
Differentiable dynamical systems,
Bull. Amer. Math. Soc., \textbf {73} (1967), 747--817.

\bibitem{tr}
 Evgenij Troitsky,  Two examples related to the twisted {B}urnside-{F}robenius theory for
  infinitely generated groups,
Fundam. Appl. Math. \textbf {21}(2016),  No.~5, 231--239.

\bibitem{Weil}
A.~Weil, Basic number theory, in Die Grundlehren der
  mathematischen Wissenschaften, Band \textbf{144}, Springer-Verlag New York, Inc., New
  York, 1967.

\bibitem{Wong01}
P. Wong,
Reidemeister zeta function for group extensions,
J. Korean Math. Soc., \textbf {38} (2001), 1107--1116.




\end{thebibliography}
\end{document}